\theoremstyle{plain}
\newtheorem{theorem}{Theorem}[section]
\newtheorem{corollary}[theorem]{Corollary}
\newtheorem{lemma}[theorem]{Lemma}
\newtheorem{proposition}[theorem]{Proposition}
\newtheorem{definition}[theorem]{Definition}
\newtheorem{assumption}[theorem]{Assumption}
\newtheorem*{definition*}{Definition}
\theoremstyle{remark}
\newtheorem{remark}[theorem]{Remark}
\newtheorem{example}[theorem]{Example}
\newtheorem*{claim*}{Claim}
\newtheorem*{remark*}{Remark}
\newtheorem*{example*}{Example}
\newtheorem*{notation*}{Notation}
\numberwithin{equation}{section}
\def\N{{\mathbb N}}
\def\R{{\mathbb R}}
\newcommand{\one}{{{\bf 1}}}
\renewcommand{\a}{\alpha}
\newcommand{\eps}{\varepsilon}
\renewcommand{\phi}{\varphi}
\newcommand{\bbeta}{\boldsymbol{\eta}}
\newcommand{\dd}{\mathrm{d}}
\newcommand{\dgrad}{\bar\nabla}
\DeclareMathOperator{\argmin}{argmin}
\newcommand{\ip}[1]{\left\langle {#1}\right\rangle}
\providecommand{\abs}[1]{\left\lvert#1\right\rvert}
\providecommand{\norm}[1]{\left\Vert#1\right\Vert}
\newcommand{\ddt}{\frac{\mathrm{d}}{\mathrm{d}t}}
\newcommand{\cM}{\mathcal{M}}
\newcommand{\cH}{\mathcal{H}}
\newcommand{\cB}{\mathcal{B}}
\newcommand{\cW}{\mathcal{W}}
\newcommand{\cL}{\mathcal{L}}
\newcommand{\cU}{\mathcal{U}}
\newcommand{\cV}{\mathcal{V}}
\newcommand{\cG}{\mathcal{G}}
\newcommand{\cX}{\mathcal{X}}
\newcommand{\cE}{\mathcal{E}}
\newcommand{\cA}{\mathcal{A}}
\newcommand{\CE}{\mathcal{CRE}}
\newcommand{\cF}{\mathcal{F}}
\newcommand{\cP}{\mathcal{P}}
\newcommand{\cPpE}{\mathcal{P}_{p,E}(\R^d)}
\newcommand{\cPE}{\mathcal{P}_{2,E}(\R^d)}
\renewcommand{\tilde}{\widetilde}
\newcommand{\vs}{v_*}
\newcommand{\vp}{v'}
\newcommand{\vsp}{v'_*}
\newcommand{\fs}{f_*}
\newcommand{\fp}{f'}
\newcommand{\fsp}{f'_*}
\newcommand{\bv}{\boldsymbol{v}}
\newcommand{\bu}{\boldsymbol{u}}
\newcommand{\bx}{\boldsymbol{x}}
\newcommand{\bb}{\boldsymbol{b}}
\definecolor{matthias}{rgb}{0.0,0.8,0.3}
\title[A Gradient flow approach to the Boltzmann equation]{A gradient flow approach to the Boltzmann equation}
\author{Matthias Erbar}
\address{Fakult\"at f\"ur Mathematik,
Universit\"at Bielefeld,
Postfach 100131, 33501 Bielefeld,
Germany}
\email{erbar@math.uni-bielefeld.de
}
\keywords{Boltzmann equation, spatially homogeneous, gradient flow,
  entropy, Kac system, propagation of chaos}
\subjclass[2020]{35Q20, 60K35, 82C40}
\begin{document}

\begin{abstract}
  We show that the spatially homogeneous Boltzmann equation evolves as
  the gradient flow of the entropy with respect to a suitable geometry
  on the space of probability measures which takes the collision
  process into account. This gradient flow structure allows to give a new proof for the convergence of Kac's random walk to the
  homogeneous Boltzmann equation, exploiting the stability of gradient
  flows.
\end{abstract}


\maketitle

\section{Introduction}\label{sec:intro}

Since the pioneering work of Otto \cite{O01} it is known that many
diffusion equations can be cast as gradient flows of entropy
functionals in the space of probability measures. The relevant
geometry is given by the $L^2$ Wasserstein distance. This approach has
been used for a variety of equations as a powerful tool in the study
of the trend to equilibrium, stability questions and construction of
solutions. In each case -- as a direct consequence of the gradient
flow structure -- the driving entropy functional is non-increasing
along the solution. One of the most emblematic dissipative evolution
equations is the Boltzmann equation modelling the evolution of a dilute
gas under elastic collisions of the particles and Boltzmann's famous
H-theorem asserts that the entropy is non-increasing along its
solutions. However, uncovering a gradient flow structure for this
equation has been an open problem since \cite{O01}.

In this article we provide a solution and give a characterization of
the spatially homogeneous Boltzmann equation as a gradient flow of the
entropy. The crucial new insight is the identification of a novel
geometry on the space of probability measures that takes the collision
process between particles into account. Our main motivation to
consider this gradient structure stems from the Kac program, in
particular the propagation of chaos for Kac's stochastic many particle
systems and its convergence to the homogeneous Boltzmann equation. We provide a new proof of this result by exhibiting a gradient flow structure also for the Kac system and showing that it $\Gamma$-converges to our gradient structure for the Boltzmann equation in the spirit of Sandier--Serfaty \cite{SS04}. 

\subsection{Homogeneous Boltzmann equation and gradient flow structure}
\label{sec:gf-intro}

We consider the spatially homogeneous Boltzmann equation
\begin{align}\label{eq:boltz}
 \partial_t f = Q(f)\;,
\end{align}
where $f:\R^d\to \R_+$ is a probability density and $Q$ denotes the Boltzmann
collision operator given by
\begin{align}\label{eq:Collision-operator-alt}
Q(f)= \int_{\R^d}\int_{S^{d-1}}\big[\fp\fsp-f\fs\big]B(v-\vs,\omega)\dd \vs\dd \omega\;.   
\end{align}
Here $B$ is the collision kernel and $v,\vs$ and $\vp,\vsp$ denote the
pre- and post-collisional velocities respectively which are related
according to
\begin{align}\label{eq:pre-post2}
  \vp = v - \ip{v-\vs,\omega}\omega\;,\quad \vsp = \vs +
  \ip{v-\vs,\omega}\omega\;,\quad \omega\in S^{d-1}\;,
\end{align}
and we will often use the notation
$f=f(v)$, $\fs=f(\vs)$, $\fp=f(\vp)$, $\fsp=f(\vsp)$.
We consider regularized collision kernels with cutoff, more precisely, we
assume that $B(v-\vs,\omega)$ is bounded away from zero and comparable to $(1+|v-\vs|^2)^{\gamma/2}$ for some $\gamma\in (-\infty,1]$, see Assumption \ref{ass:kernel-bounded} for more details.

Boltzmann's H-theorem asserts that the entropy $\cH(f)=\int f\log f$
is non increasing along solutions to the Boltzmann equation, more
precisely, we have
 $ \frac{\dd}{\dd t} \cH(f_t) = - D(f_t)\leq 0\;,$
where 
\begin{align}\label{eq:intro-dissipation}
  D(f_t) = \frac14\int
  \log\frac{\fp\fsp}{f\fs}(\fp\fsp-f\fs)B(v-\vs,\omega)\dd\omega\dd
  \vs\dd v\;.
\end{align}

Let us now give a heuristic description of the gradient flow structure
of the Boltzmann equation. We recall that the gradient flow of a function
$E$ on a Riemannian manifold $M$ is given as
$\dot x_t = -\nabla E(x_t) = -K_{x_t}DE(x_t)$ with $DE$ being the
differential of $E$ and $K_x:T^*_xM\to T_xM$ the canonical map from
the cotangent to the tangent space induced by the Riemannian metric.

For the Boltzmann equation we formally take the manifold to be the set
$\cP(\R^d)$ of probability densities on $\R^d$ and the driving
functional to be the entropy $\cH$. Its differential $D\cH(f)$ at $f$
is given as $\log f=\frac{\delta\cH}{\delta f}$ in the sense that for
any tangent vector, i.e.~a function $s$ with $\int s(v)\dd v=0$, we
have
$\lim_{\varepsilon\to0}\varepsilon^{-1}\big[\cH(f+\varepsilon
s)-\cH(f)\big] =D\cH(f)[s]=\int \log f(v)s(v)\dd v$.
Identifying the gradient flow structure of the Boltzmann equation
requires to identify the right geometry on the set $\cP(\R^d)$ given
in terms of a suitable map $K$. This is achieved by introducing the
\emph{Onsager operator} $\mathcal K^B_f$ given by
\begin{align}\label{eq:Onsager}
 \mathcal K^B_f\phi (v) = -\int \bar\nabla \phi \Lambda(f) B(v-\vs,\omega)\dd\vs\dd\omega\;.
\end{align}
Here we have set $\dgrad\phi = \phi'+\phi'_*-\phi-\phi_*$ and
$\Lambda(f)$ is shorthand for $\Lambda\big(f \fs,\fp\fsp\big)$, where
$\Lambda(s,t)=(s-t)/(\log s - \log t)$ denotes the logarithmic mean.
Now the Boltzmann equation can be written as
\begin{align*}
  \partial_t f = Q(f) = -\mathcal K^B_f D\cH(f)\;,
\end{align*}
giving the desired gradient flow structure.
\smallskip

This gradient flow interpretation of the Boltzmann equation can also
be expressed by the following \emph{variational
  characterization}. Denoting by $\ip{\cdot,\cdot}_f $ the Riemannian
metric at $f$ we have for any curve $(f_t)$ of probability densities
that
\begin{align}\label{eq:intro-CS}
  \cH(f_T)- \cH(f_0) = \int_0^T\ip{\nabla \cH(f_t),\partial_tf}_{f_t}\dd t \geq -\frac12\int_0^T|\nabla \cH(f_t)|^2_{f_t} +|\partial_tf|^2_{f_t}\dd t\;.
\end{align}
Moreover, equality holds if and only if $\partial_tf=-\nabla\cH(f_t)$,
i.e.~$(f_t)$ is the gradient flow of the entropy, hence the solution
to the Boltzmann equation. In this sense, the Boltzmann equation is a
steepest descent flow decreasing the entropy \emph{as fast as
  possible}. 
\smallskip

Our first main result is a rigorous implementation of this variational
characterization. To this end we replace the formal norm of the
gradient and the speed of the curve with suitable notions. Note that
\begin{align*}
|s|^2_{f}=\int\phi\mathcal K^B_f\phi= \frac14\int |\bar\nabla\phi|^2\Lambda(f)B(v-\vs,\omega)\dd\omega\dd\vs\dd v\;,
\end{align*}
with $\phi$ such that $\mathcal K^B_f\phi=s$ and where we have
symmetrized over $v,\vs,\vp,\vsp$. In particular, the dissipation
\eqref{eq:intro-dissipation} takes the role of norm of the gradient,
i.e.~$|\nabla \cH(f)|_f^2= \int\log f\mathcal K^B_f\log f= D(f)$.

In order to define the notion of speed of a curve $(f_t)_t$, we first consider  the equation
\begin{align}\label{eq:intro-cre-psi}
  \partial_t f(v) = \mathcal K^B_{f_t}\psi_t (v)= -\int\bar \nabla\psi_t\Lambda(f)B(v-\vs,\omega)\dd \vs\dd\omega\;.
\end{align}
We perform a change of variables, setting
$U_t(v,\vs,\omega)=\bar\nabla\psi_t\Lambda(f)B(v-\vs,\omega)$ so that
\eqref{eq:intro-cre-psi} becomes linear in $(f,U)$ and reads for all
test functions $\phi$ as:
\begin{align}\label{eq:CRE-pre}
  \ddt \int \phi f_t = \frac14\int \bar\nabla\phi U_t\;.
\end{align}
This will be called \emph{collision rate equation} since $U$ governs
the evolution of the density $f$ by prescribing the rate at which
collisions happen between the particles. Now, the quantity $\int_0^T |\partial_tf|_f^2\dd t$ will be replaced by the action
\begin{align}\label{eq:action-pre}
 \cA_T(f):=\inf\left\{\frac14\int_0^T \int \frac{|U_t|^2}{\Lambda(f_t)B}\;\dd t\right\}\;,
\end{align}
where the infimum is over all $(U_t)_t$ satisfying the collision rate
equation \eqref{eq:CRE-pre}. See Section \ref{sec:CRE-action} for the
precise construction where we study \eqref{eq:CRE-pre} and \eqref{eq:action-pre} in a natural measure valued setting. Under the Assumption \ref{ass:kernel-bounded} on $B$ we then have the following variational characterization, see Theorem
\ref{thm:EDI} below.

\begin{theorem}\label{thm:EDI-intro}
   For any
  curve $(f_t)_{t\in[0,T]}$ of probability densities with
  $\cH(f_0)<\infty$ and bounded moment of order $2+\max(0,\gamma)$ we have
 \begin{align*}
   J_T(f):=\cH(f_T)-\cH(f_0)+\frac12\int_0^TD(f_t)\dd t+\frac12\cA_T(f) \geq 0.
 \end{align*}
 Moreover, we have $J_T(f)=0$ if and only if $(f_t)_{t}$ is a
 solution to the homogeneous Boltzmann equation starting from $f_0$.
\end{theorem}

We remark that this result can be recast in the framework of gradient
flows in metric spaces as developed in \cite{AGS08}. In particular it
is possible to construct the Riemannian distance $\cW_B$ on
$\cP(\R^d)$ associated with the Onsager operator $\mathcal K^B$. We
explore this point of view in the appendix.\medskip

We will also discuss a generalization of the previous theorem giving variational characterizations of the Boltzmann equation in terms of so-called \emph{generalized gradient structures}. To this end one considers a pair of primal and dual dissipation potentials $\mathcal R(f,\phi)$ and $\mathcal R^{*}(f,\xi)$ that are convex conjugated in the second variable. Then similar as in \eqref{eq:intro-CS} we have formally for any curve $(f_{t})$ of densities that 
\begin{equation*}
\mathcal H(f_{T})-\mathcal H(f_{0}) =\int_{0}^{T}\ip{D\mathcal H(f_{t}),\partial_{t}f}\dd t \geq -\int_{0}^{T}\mathcal R(f_{t},\partial_{t}f) +\mathcal R^{*}(f_{t},-D\mathcal H(f_{t}))\dd t\;.
\end{equation*}
Equality is attained if and only if 
\begin{equation}\label{eq:GF-gen}
\partial_{t}f = D_{\xi}\mathcal R^{*}(f, -D\mathcal H(f))\;.
\end{equation}
Hence the latter evolution is characterized as minimizer of the functional
\begin{equation}\label{eq:L}
\mathcal L_{T}(f):= \mathcal H(f_{T})-\mathcal H(f_{0})+\int_{0}^{T}\mathcal R(f_{t},\partial_{t}f) +\mathcal R^{*}(f_{t},-D\mathcal H(f_{t}))\dd t\;.
\end{equation}
Under suitable compatibility assumptions on $\mathcal R$ and $\mathcal H$, the resulting evolution \eqref{eq:GF-gen} is indeed the Boltzmann equation. One choice for $\mathcal R(f,\partial_{t}f)$ and $\mathcal R^{*}(f,-D\mathcal H(f))$ are the quadratic expressions $\frac12|\partial_{t}f|^{2}_{f}=\frac12\ip{\partial_{t}f,\mathcal K^{B}_{f}\partial_{t}f}$ and $\frac12|\nabla \mathcal H(f)|_{f}^{2} = \frac12\ip{D\mathcal H(f),\mathcal K_{f}^{B}D\mathcal H(f)}$ by which we recover the gradient flow structure already discussed.
One compelling motivation for such generalized gradient structures comes from the fact that in many situations they arise naturally from the analysis of large deviations for an underlying microscopic particle system whose limiting behavior is described by \eqref{eq:GF-gen}. Namely, the functional $\mathcal L_{T}$ appears as the rate function for large deviations on the path level, see e.g.~\cite{MPR14} for an in depth discussion. In the construction of the generalized gradient structure, we follow the approach of \cite{PRST20}, where such structures have been analyzed in detail in the context of jump processes.

In the present setting we obtain the following result. Fix a pair of even, lower semi-continuous convex conjugated functions $\Psi, \Psi^{*}:\R\to[0,\infty)$ with $\Psi(0)=\Psi^{*}(0)=0$ and a 1-homogeneous concave function $\theta:[0,\infty)\times[0,\infty)\to [0,\infty)$ such that the compatibility condition
\[(\Psi^{*})'(\log s-\log t)\theta(s,t) = s-t\qquad\forall s,t>0\]
holds (see Assumption \ref{ass:gen-grad} for additional assumptions on $\Psi^{*}$ and $\theta$). Set
\begin{align*}
\mathcal R(f,U)&:=\frac14\int \Psi\Big(\frac{U}{\theta(f)B}\Big)\theta(f)B\dd v\dd\vs\dd\omega\;,\\
\mathcal D_{\Psi^{*}}(f)&:=\mathcal R^{*}(f, -DH(f)):=\frac14\int \Psi^{*}(-\bar\nabla \log f)\theta(f)B\dd v\dd\vs\dd\omega\;,
\end{align*}
where we have set $\theta(f):=\theta(f\fs,\fp\fsp)$. Then we have (see Thm.~\ref{thm:ED-balance} below):
\begin{theorem}\label{thm:intro-ED-balance}
  For any curve of probability densities $(f_{t})$ with $\cH(f_0)<\infty$ and bounded moment of order $2+\max(0,\gamma)$ and $(U_{t})$ such that the collision rate equation \eqref{eq:CRE-pre} holds we have that:
 \begin{align}\label{eq:intro-alt-chain-rule-est}
   \mathcal L_T(f,U):=\cH(f_T)-\cH(f_0)+\int_0^T D_{\Psi^{*}}(f_t) + \mathcal R(f_{t},U_{t})\dd t \geq 0 \;.
 \end{align}
 Moreover, we have $\mathcal L_T(f,U)=0$ if and only if $(f_{t})$ is a solution to
 the homogeneous Boltzmann equation and $U_{t}=(f\fs-\fp\fsp)B$.
\end{theorem}

This generalized gradient structure encompasses in particular the previous quadratic structure by choosing $\theta=\Lambda$ the logarithmic mean and $\Psi(\xi)=\Psi^{*}(\xi)=\frac12 |\xi|^{2}$. Another particular choice of interest is
\[\theta(s,t)=\sqrt{st}\;,\qquad \Psi^{*}(\xi)=4\big(\cosh(\xi/2)-1\big)\;.\]
This particular variational structure seems to have been explicitly identified for the first time by Grmela, see e.g.~\cite[Eq. (A7)]{Grm93}.
For jump processes a similar structure is connected with the large deviations on the path level for the empirical measure of a growing number of independent particles, see e.g.~\cite{PRST20}. Here, the resulting structure can at least formally can be related to the large deviations of the Kac particle system that we describe below.

\subsection{Consistency for Kac's random walk}
\label{sec:kac-intro}

A central motivation for considering the gradient flow structure just
described is to give a new proof of the convergence of Kac's random
walk to the solution of the spatially homogeneous Boltzmann
equation. Kac introduced his random walk in the seminal work
\cite{Kac56} as a probabilistic model for $N$ colliding particles. It
is a continuous time Markov chain on the set $\cX_N$ of $N$ velocities
with fixed momentum and energy,
\begin{align*}
  \cX_N := \left\{(v_1,\dots,v_N)\in\R^{dN} ~|~\sum_{i=1}^Nv_i=0\;,~\sum_{i=1}^N|v_i|^2=Nd  \right\}\;.
\end{align*}
In each step, two uniformly chosen particles $i,j$ collide, i.e.~$\bv$
is updated to
$R^\omega_{ij}\bv = (v_1,\dots,v_i',\dots,v_j',\dots,v_N)$ where
$v_i' = v_i - \ip{v_i-v_j,\omega}\omega$ and
$v_j' = v_j + \ip{v_i-v_j,\omega}\omega$ with a random collision parameter $\omega\in S^{d-1}$ distributed according to $B(v_i-v_j,\cdot)$. The rate is chosen such that
on average $N$ collisions occur per unit of time. More precisely, the
generator of the Markov chain is given by
\begin{align}\label{eq:Kac-generator-intro}
  A f(\bv) = \frac{1}{2N}\int_{S^{d-1}}\sum_{i,j}\left[f(R^\omega_{ij}\bv)-f(\bv)\right]B(v_i-v_j,\omega)\dd \omega\;.
\end{align}
The Markov chain is reversible with respect to the Hausdorff measure
$\pi_N$ on $\cX_N$. If we denote by $\mu^N_t$ the law of the Markov
chain starting from $\mu^N_0$, then its density $f^N_t$ w.r.t.~$\pi_N$
satisfies Kac's master equation $\partial_t f^N_t=A f^N_t$.

A natural way to study the convergence of Kac's random walk to the
Boltzmann equation is via its empirical measures
$L_N(\bv)=\frac1N\sum_{i=1}^N\delta_{v_i}\in\cP(\R^d)$. We will show
the following:

\begin{theorem}\label{thm:kac-boltz-intro}
  Let $B$ satisfy Assumption \ref{ass:kernel-bounded}. For each $N$
  let $(\mu^N_t)_{t\geq0}$ be the law of Kac's random walk starting
  from $\mu^N_0$ and denote by
  $c^N_t:=(L_N)_\#\mu^N_t\in\cP(\cP(\R^d))$ the law of its
  empirical measures. Assume that $\mu^N_0$ is well-prepared for some
  $\nu_0=f_0\cL\in\cP(\R^d)$ with $\cH(\nu_0)<\infty$ (if $\gamma>0$ assume in addition finite 4th moment of $\nu_0$) in the sense
  that in the limit $N\to\infty$
  \begin{align*}
    c^N_0\rightharpoonup \delta_{\nu_0}\;,\quad
    \frac1N\cH(\mu^N_0|\pi_N) \rightarrow \cH(\nu_0|M)\;.
  \end{align*}
  Assume further that for some $p>2+\max(0,\gamma)$  
  \begin{align*}
    \sup_N \ip{\cE^N_p,\mu^N_0} <\infty\;, \qquad \cE^N_p(\bv):=\frac1N\sum_{i=1}^N|v_i|^p\;.
  \end{align*}
  Then, for all $t>0$, as $N\to\infty$ we have
   \begin{align}\label{eq:prop-chaos-intro}
    c^N_t\rightharpoonup \delta_{\nu_t}\;,\quad \frac1N\cH(\mu^N_t|\pi_N) \rightarrow \cH(\nu_t|M)\;,
  \end{align}
 where $\nu_t=f_t\cL$ and $f_t$ is the unique
  solution to the spatially homogeneous Boltzmann equation with
  initial datum $f_0$.
\end{theorem}
Here $\cH(\cdot|\pi_N)$ denotes the relative entropy w.r.t~$\pi_N$ and
$\cH(\cdot|M)$ the relative entropy w.r.t.~the standard Gaussian
density $M$ in $\R^d$. Note that the well-preparedness assumption is
satisfied for instance if the initial velocities are independent,
i.e.~$\mu_0^N=\nu_0^{\otimes N}$.  An important feature of Kac's model
is the \emph{propagation of chaos}: if the initial distribution of
velocities is asymptotically independent as $N\to\infty$ then the same
holds for all times. One way of making this precise is the convergence
\eqref{eq:prop-chaos-intro}, which is usually called entropic
propagation of chaos. This is motivated by the fact that for a true
product measure we have $\cH(\nu^{\otimes N})=N\cdot\cH(\nu)$.

We point out that the previous theorem is well-known even for a larger
class of collision kernels, see the references below. The contribution
we make here is to provide a new angle of attack on this problem by
exploiting the gradient flow structure. We will use the stability of
gradient flows following the approach of Sandier--Serfaty
\cite{SS04}. It turns out that Kac's random walk is the gradient flow
of the entropy $\cH(\cdot|\pi_N)$ in $\cP(\cX_N)$ equipped with a suitable
geometry, as we shall make precise in Section \ref{sec:kac-gf}. In
particular, the energy dissipation identity
\begin{align}\label{eq:EDI-Kac-intro}
  J^N_T(\mu^N)=\cH(\mu^N_t|\pi_N)-\cH(\mu^N_0|\pi_N)+\frac12\int_0^TD^N(\mu^N_t)\dd t+\frac12 \cA^N_T(\mu^N)
  = 0
\end{align}
holds, where $D^N$ is the dissipation of $\cH(\cdot|\pi_N)$ along the master
equation and $\cA^N_T(\mu^N)$ is the action. This is based on results
for general Markov chains and jump processes in
\cite{Ma11,Mie11,Erb14}. To obtain the desired convergence to the
Boltzmann equation it is sufficient together with some compactness to
prove convergence (in fact only $\liminf$ estimates) for the
constituent elements of the gradient flow structure, the entropy,
dissipation and the action, which allow to pass to the limit in
\eqref{eq:EDI-Kac-intro}.

\subsection{Connection to the literature}
\label{sec:lit}

For an overview of results for the spatially homogeneous Boltzmann
equation, we refer to the review by Desvillettes
\cite{DES01}. Modifications of the Wasserstein geometry have been
studied in works by Maas \cite{Ma11} and Mielke \cite{Mie11}
where gradient flow structures for finite Markov chains and
reaction-diffusion equations have been found. The gradient flow
structure for the homogeneous Boltzmann equation obtained here is
related to the discrete framework of reaction equations in
\cite{Mie11}. Formally, the homogeneous Boltzmann equation could be
seen as a binary reaction equation with a continuum of species indexed
by the velocity. Recently, a gradient flow characterization for the homogeneous Landau equation has been 
given in \cite{CDDW20} using a similar approach as in this work. Spatially inhomogeneous linear Boltzmann equations have been characterized variationally in \cite{BBB17} using a variant of the energy dissipation identity. The underlying structure is non-quadratic and inspired by large deviations. We also mention the work \cite{BBBO21}, where the large deviations of a Kac type particle system with conservation of momentum but not of energy have been determined and a corresponding generalized gradient structure for the limiting Boltzmann type equation has been established.
  
Theorem \ref{thm:kac-boltz-intro} on the convergence of Kac's random
walk goes back to Kac who proved an analogue for a simplified model
with one-dimensional velocities in \cite{Kac56}. The first proof of
convergence to the homogeneous Boltzmann equation for the model
considered here is due to Sznitman \cite{Szn84}. In both cases more
general collision kernels than in this article are considered, including in
particular the case of hard spheres. Quantitative convergence
results in Wasserstein distance were obtained later by
Mischler--Mouhot \cite{MM13}, Norris \cite{Nor16}, and Cortez--Fontbona \cite{CF18}.  Quantitative estimates for the entropic propagation of chaos in the Kac model, i.e.~on the speed of convergence in \eqref{eq:prop-chaos-intro} have been given by Carrapatoso \cite{Car15}. Similar results have been obtained for the Landau equation and other related models, see e.g.~\cite{FH16, FG17, JW18}.

\subsection{Organization}
In Section \ref{sec:prelim} we collect necessary preliminaries. In
Section \ref{sec:CRE-action} we introduce the collision rate equation
and the action of a curve. The characterization of the Boltzmann
equation as entropic gradient flow is obtained in Section
\ref{sec:gradflow}. In Section \ref{sec:kac} we exhibit a gradient
flow structure for Kac's random walk and prove its convergence to the
Boltzmann equation.

The appendices \ref{sec:metric}, \ref{sec:metricgf}, and
\ref{sec:minmov} contain the construction of the distance associated
to the Onsager operator, a reformulation of our results in the
framework of gradient flows in metric spaces, and a variational
approximation scheme for the Boltzmann equation based on the gradient
structure.

\subsection*{Acknowledgment} {\small The author is grateful for the
  remarks and suggestions of the referees on a previous version of the
  manuscript. He wishes to thank Andr\'e Schlichting for fruitful
  discussions on this work and related topics. The author gratefully
  acknowledges support by the German Research Foundation through the
  Collaborative Research Center 1060 \emph{The Mathematics of Emergent
    Effects} and the Hausdorff Center for Mathematics}

\section{Preliminaries}\label{sec:prelim}

\subsection{Homogeneous Boltzmann equation, entropy and dissipation}
\label{sec:boltz-recollect}

Let $d\geq 3$. We denote by $\cP(\R^d)$ the space of Borel probability
measures on $\R^d$ equipped with the topology of weak convergence in
duality with bounded continuous functions. We denote by $\cH(\mu)$ the
\emph{Boltzmann--Shannon entropy} defined for $\mu\in\cP(\R^d)$ by
\begin{align*}
  \cH(\mu) = \int f(v)\log f(v) \dd v\;,
\end{align*}
provided $\mu=f\cL$ is absolutely continuous w.r.t.~Lebesgue measure
$\cL$ and $\max(f\log f,0)$ is integrable, otherwise we set
$\cH(\mu)=+\infty$. We will also write $\cH(f)$ if $\mu=f\cL$.

For $p\geq1$, let $\cP_p(\R^d)=\{\mu\in\cP(\R^d)~:~\int|v|^p\dd\mu(v)<\infty\}$
denote the set of probability measures with finite moment of order $p$.
We will write \begin{equation}
  \label{eq:pmoment}
  \mathcal E_p(\mu):=\int|v|^p\dd\mu(v)\;.
\end{equation}
For $\mu\in\cP_2(\R^d)$ we define by
\begin{align}\label{eq:momentum-energy}
  \cM(\mu):=\int v\dd\mu(v)\;,\quad \mathcal E(\mu):=\mathcal E_2(\mu)=\int|v|^2\dd\mu(v)\;,
\end{align}
the momentum and energy of $\mu$. For $E>0$ we let
\begin{align}\label{eq:P2E}
  \cPpE:=\{\mu\in\cP_p(\R^d)~:~\cE_p(\mu)\leq E\}\;,
\end{align}
the set of measures with energy less than $E$. Note that $\cPpE$ is
compact for the weak topology. For $m\in\R^d$ and $E>0$ we let 
\begin{align*}
M^{m,E}(v)=\frac{1}{(2\pi E)^{d/2}}
\exp\left(-\frac{|v-m|^2}{2E}\right)\;,
\end{align*}
denote the Maxwellian or Gaussian density distribution with momentum
$m$ and energy $Ed$. The relative entropy w.r.t.~$M^{m,E}$ of a probability measure $\mu=f \cL$ is defined by 
\begin{align}\label{eq:ent-gauss}
  \cH(\mu|M^{m,E}) = \int f(v) \log \frac{f(v)}{M^{m,E}(v)}\dd v\;. 
\end{align}
For any $\mu\in \cP_2(\R^d)$ we have
\begin{align}\label{eq:ent-lowerbound}
  \cH(\mu) = \cH(\mu|M^{m,E}) - \frac12\int\frac1{E}|v-m|^2\mu(\dd v) -\frac{d}{2}\log(2\pi E)\;.
\end{align}
By Jensen's inequality we have $\cH(\cdot|M^{m,E})\geq 0$. Hence, we
see that $\cH$ is bounded below on $\cPE$. Moreover, we have that
$\cH(\mu)=\cH(\mu|M^{\mu})+\cH(M^\mu)$.  Finally, we note that $\cH$
is lower semicontinuous on $\cP_2(\R^d)$ w.r.t.~weak convergence. This
follows from the corresponding property of $\cH(\cdot|M^{m,E})$ and
lower semicontinuity of moments.  \smallskip

We collect some well known results on existence and uniqueness and
propagation of integrability for the homogeneous Boltzmann
equation.  We use the notation
\begin{equation*}
  \ip{k}:=\sqrt{1+|k|^2}\;,\quad k\in \R^d\;.
\end{equation*}
Moreover, we use the weigthed $L^1$ spaces $L^1_s(\R^d):=\{f\in L^1(\R^d): \int \ip{v}^s|f(v)|\dd v<\infty\}$.

Throughout this article we make the following assumption on
the collision kernel.

\begin{assumption}\label{ass:kernel-bounded}
  $B:\R^d\times S^{d-1}\to\R_+$ is measurable, continuous w.r.t.~the first variable, invariant under the transformation \eqref{eq:pre-post2}, and
   there exist constants
  $\gamma\in (-\infty, 1]$ and $c_B>0$ such that for all $k\in\R^d,\omega\in S^{d-1}$:
  \begin{align}\label{eq:kernel-bounded}
    c_B^{-1}\ip{k}^\gamma \leq B(k,\omega) \leq c_B\ip{k}^\gamma\;.
  \end{align}
\end{assumption}

Let us recall that typical choices of the collision kernel motivated
on physical grounds take the form $B(k,\omega)=|k|^\gamma b(\alpha)$
with $\alpha$ the angle between $k$ and $\omega$. The assumption above
corresponds to an angular cutt-off assumption removing the typical
singularity in $b$ as well as a regularization near $k=0$ removing the
singularity for $\gamma<0$ and ensuring boundedness away from zero for
$\gamma>0$.

  \begin{theorem}\label{thm:boltzmann}
 Let $f_0:\R^d\to\R_+$ be such that
 \begin{align*}
   \int_{\R^d} \ip{v}^2f_0(v)\dd v <\infty\;,\quad \int f_0(v)\log f_0(v)\dd v < \infty\;.
 \end{align*}
 If $\gamma>0$ assume in addition that $f_0\in L^1_4(\R^d)$. Then there
 exists a unique non-negative solution
 $f\in C\big([0,\infty);L^1(\R^d)\big)\cap
 L^\infty\big((0,\infty);L^1_2(\R^d)\big)$ to the homogeneous
 Boltzmann equation \eqref{eq:boltz} conserving mass, momentum and
 energy, i.e.
 \begin{align*}
   \int \big(1,v,|v|^2\big)f_t(v)\dd v =    \int \big(1,v,|v|^2\big)f_0(v)\dd v\quad\forall t\geq 0\;.
 \end{align*}
Moreover, we have for all $t>0$:
 \begin{align}\label{eq:energy-identity}
  \cH(f_t)-\cH(f_s) &= - \int_0^tD(f_r)\dd r\leq 0\;,
\end{align}
where
\begin{align}\label{eq:def-dissipation}
  D(f):= \int_{\R^{2N}}\int_{S^{d-1}}\log\frac{\fp\fsp}{f\fs}\big[\fp\fsp-f\fs\big]B(v-\vs,\omega)\dd v\dd\vs\dd\omega\;.
\end{align}
\end{theorem}
\begin{proof}
  For existence, conservation of mass, momentum, and energy, as well
  as uniqueness we refer to \cite{Ark72}. The entropy identity
  \eqref{eq:energy-identity} is proven in \cite{Lu99}.
\end{proof}

We note that for conventional hard potential kernels of the form
$B(k,\omega)=|k|^\gamma b(\theta)$, $\gamma\in (0,1]$ uniqueness of
conservative solutions is known assuming only finite energy of the
initial datum \cite{MW99}. For general kernels as considered here we
could not retrieve such an improved result in the literature.

The quantity $D(f)$ is called the \emph{entropy dissipation}. More
generally, we define the entropy dissipation $D(\mu)$ for a
probability measure $\mu$ by setting $D(\mu)=D(f)$, provided
$\mu=f\cL$ is absolutely continuous and $+\infty$ otherwise.

\subsection{Regularization by convolution}
\label{sec:OU}

For $t>0$, we consider the Maxwellian distribution 
\begin{align*}
  M_t(v) = \frac{1}{(2\pi t)^{d/2}}\exp\Big(\frac{|v|^2}{2t}\Big)\;,
\end{align*}
and note that $\int |v|^2M_t(v)\dd v=2t$. We write $M:=M_1$.

For any non-negative $f\in L^1$ with $\|f\|_{L^1}=1$, $M_t*f$ is $C^\infty$ with the bounds
\begin{align}\label{eq:Gaussian-bound}
  |M_t*f|\leq C_t\;,\quad |\log M_t*f|(v)\leq C_t(1+|v|^2)\;,
\end{align}
for a suitable constant $C_t$, see for instance \cite{CC92}. 

For fixed $\omega\in S^{d-1}$ we will denote by $T_\omega$ the
transformation $(v,\vs)\mapsto(\vp,\vsp)$ with $\vp,\vsp$ given by
\eqref{eq:pre-post2}. Note that $T_\omega$ is involutive and has unit
Jacobian determinant. We will set
\begin{align*}
  X=(v,\vs),\quad X'=(\vp,\vsp) = T_\omega X\;. 
\end{align*}
By abuse of notation we denote the Maxwellian distribution
 in $\R^{2d}$ again by $M_t$. Note that $M_t(X):=M_t(v)M_t(\vs)$. For a function
$F:\R^d\times\R^d\to\R$ we will set
\begin{align*}
  T_\omega F (X) := F(T_\omega X)\;.
\end{align*}
Convolution behaves well under tensorization. More precisely,
if for a function $f:\R^d\to\R $ we set $F=f\otimes f$,
i.e.~$F(X)=f\fs$, then we have
\begin{align*}
  F*M_t = (f*M_t)\otimes(f*M_t)\;. 
\end{align*}

The following commutation relation with the pre-post-collision change
of variables will be crucial in the sequel. It can be found in
\cite[Prop.~4] {TV99}. For the reader's convenience we will give the
short proof.

\begin{lemma}\label{lem:OU-commutation}
  Let $F:\R^{2d}\to\R$. Then, we have that for each $\omega\in
  S^{d-1}$ and any $t> 0$:
  \begin{align}\label{eq:commutation-scaling-convolution}
   (T_\omega F)*M_t = T_\omega (F*M_t) \;.
  \end{align}
  If $F=f\fs$ we have for short $M_t*(\fp\fsp)=(M_t*f)'(M_t*f)'_*$.
\end{lemma}

\begin{proof}
  First note that
  $M_t(T_\omega X)=M_t(X)$, since the relation between pre-
  and post-collisional velocities is such that
  $|v|^2+|\vs|^2=|\vp|^2+|\vsp|^2$. Using also the fact that $T_\omega$ is
  involutive with unit determinant, we find
 \begin{align*}
    &\big((T_\omega F)*M_t\big)(X) 
  = \int F(T_\omega Y)M_t(X-Y)\dd Y = \int F(Y)M_t(X-T_\omega^{-1}Y)\dd Y\\
&= \int F(Y)M_t(T_\omega X-Y)\dd Y = (F*M_t)(T_\omega X)\;.
 \end{align*}
\end{proof}

  \begin{lemma}\label{lem:kernel-conv}
    For any $p\in\R$ and $0<\delta<1$ we have
    \[\int_{\R^d}\ip{w}^pM_\delta(v-w)\dd w \leq C \ip{v}^p\;,\]
for a constant $C$ depending only on $|p|$ and $m_{|p|}(M)=\int |v|^{|p|}M(v)\dd v$.
\end{lemma}
\begin{proof}
  We use the fact that for any $p\in \R$ and $x,y\in\R^d$
  \begin{equation}
    \label{eq:petree}
    \frac{\ip{x}^p}{\ip{y}^p}\leq 2^{|p|/2}\ip{x-y}^{|p|}\;,
  \end{equation}
  known as Peetre's inequality. \eqref{eq:petree} can be readily checked for $p=2$. Taking non-negative powers yields \eqref{eq:petree} for $p\geq0$. Reversing the role of $x$ and $y$ and taking positive powers yields the statement for $p<0$.

  Now, using \eqref{eq:petree} we can estimate
  \begin{align*}
    \int \ip{w}^p M_\delta(v-w)\dd w \leq 2^{|p|/2}\ip{v}^p\int \ip{v-w}M_\delta(v-w)\dd w
    = 2^{|p|/2}\ip{v}^p \int (1+\delta^2|w|^2)^{|p|/2}M(w)\dd w\;,
  \end{align*}
  and the claim readily follows.
  \end{proof}

\subsection{Integral functionals on measures}
\label{sec:func}
We provide here basic results on integral functionals on measures that will be often used in the following.

Let $\mathcal X$ be locally compact Polish space. We denote by $\cM(\mathcal X;\R^n)$
the space of vector-valued Borel measures with finite variation on
$\mathcal X$. It will be endowed with the weak* topology of convergence in
duality with $C_0(\mathcal X;\R^n)$, i.e.~continuous functions vanishing at
infinity. Let $f:\R^n\to [0,\infty]$ be a convex, lower
semicontinuous, and superlinear and let $\sigma$ be a non-negative finite Borel measure on $\mathcal X$. Define on
$\cM(\mathcal X;\R^n)$ the functional $\mathcal F(\cdot|\sigma)$ via
  \begin{align}\label{eq:int-funct}
    \cF_{f}(\gamma | \sigma)=\int_{\mathcal X}f\left(\frac{\dd\gamma}{\dd\sigma}\right)\dd\sigma\;.
  \end{align}
  and set $\mathcal F_{f}(\gamma|\sigma)=+\infty$ if $\lambda$ is not absolutely continuous w.r.t.~$\gamma$. Note that the definition
  is independent of the choice of $\sigma$ if $f$ is positively $1$-homogeneous, i.e.~we have $f(\lambda r)=\lambda f(r)$ for all $r\in\R^{n}$ and $\lambda\geq 0$. We will write $\mathcal F_{f}(\cdot)$ instead of $\mathcal   F_{f}(\cdot|\sigma)$ in this case.
  
  \begin{lemma}\label{lem:Fprops}
  \begin{itemize}
  \item[(i)] $\cF_{f}(\cdot|\sigma)$ is convex and sequentially lower semicontinuous
    w.r.t.~weak* convergence.
  \item[(ii)] Assume that $f$ is $1$-homogeneous. If $\mathcal Y$ is another locally~compact Polish space and
    $T:\mathcal X\to \mathcal Y$ is Borel measurable, then we have that
    $\cF_{f}(T_\#\gamma) \leq \cF_{f}(\gamma)$ for all $\gamma$, where $\cF$
    is defined analogously on $\cM(\mathcal Y;\R^n)$.
\end{itemize}
\end{lemma}
\begin{proof}
  (i) This is proven in \cite[Thm.~3.4.3]{Bu89}.\\
%
(ii) Let $\bar\gamma^i=T_\#\gamma^i$ and $\bar\sigma=T_\#\sigma$.  Let $(\sigma_y)_{y\in \mathcal Y}$ be a disintegration of $\sigma$ w.r.t.~$\bar\sigma$. I.e.~$\sigma_y$ are measures on $\mathcal X$ such that $y\mapsto \sigma_y(E)$ is Borel measurable for all Borel sets $E\subset \mathcal X$, $\sigma_y(E)=\sigma_y(E\cap T^{-1}(y))$, $\sigma_y(\mathcal X)=\sigma(\mathcal X)$ for all $y$, and we have $\sigma(E)=\int \sigma_y(E)\dd \bar\sigma(y)$. Write $\lambda=\rho\sigma$, and note that we have~$\bar\lambda=\bar\rho\bar\sigma$ with $\bar\rho(y):=\int\rho(x)\sigma_y(\dd x)$. Now put $\rho_y(x)=\rho(x)/\bar\rho(y)$. Then we have 
\begin{align*}
  \cF_{f}(T_\#\gamma)&=\int_{\mathcal Y}f\big[\bar\rho\big]\dd\bar\sigma
 = \int_{\mathcal Y}f\Big[\int_{\mathcal X}\rho_y\dd\sigma_y\bar\rho(y)\Big]\bar\sigma(\dd y)\\
 &\leq \int_{\mathcal Y}\int_{\mathcal X}f\big[\rho_y(x)\bar\rho(y)\big]\sigma_y(\dd x)\bar\sigma(\dd y)
=\int f\big[\rho\big]\dd\sigma = \cF_{f}(\gamma)\;,
\end{align*}
where we have used Jensen's inequality due to the convexity and
homogeneity of $\alpha$.
 \end{proof}

As a first consequence we obtain

\begin{lemma}[Lower semicontinuity of dissipation]\label{lem:diss-lsc}
  For any sequence $(\mu_n)$ in $\cP(\R^d)$ converging weakly to $\mu$
  we have that
  \begin{align}\label{eq:lsc-dissipation}
    D(\mu) \leq \liminf_n D(\mu_n)\;.
  \end{align}
\end{lemma}

\begin{proof}
  Consider the convex, lower semicontinuous, and $1$-homogeneous
  function $G(s,t)=\frac14(t-s)(\log t-\log s)$. For $\mu\in\cP(\R^d)$ define non-negative measures $\mu^1,\mu^2\in \cM_+(\Omega)$ by
 \begin{align*}
  \mu^1(\dd v,\dd \vs,\dd \omega) := B(v-\vs,\omega)\mu(\dd v)\mu(\dd\vs)\dd\omega\;,\quad \mu^2 := T_\#\mu^1\;,
\end{align*}
where $T$ is the change of variables
$(v,\vs,\omega)\mapsto (T_\omega(v,\vs),\omega)$ between pre- and
post-collisional variables defined in \eqref{eq:pre-post2}. We note that 
  \begin{align*}
   D(\mu)=\cG(\mu^1,\mu^2):=\int G\left(\frac{\dd\mu^{1}}{\dd\sigma},\frac{\dd\mu^{2}}{\dd\sigma}\right)\dd\sigma\;,
  \end{align*}
  where $\sigma$ is any measure such that $\mu^1,\mu^2\ll\sigma$.
  Note that by the Assumption \ref{ass:kernel-bounded} on the
  collision kernel $B$, the weak convergence of $\mu_n$ to $\mu$
  implies the weak* convergence of $\mu^i_n$ to $\mu^i$ in $\cM(\Omega)$
  for $i=1,2$. Now the claim follows immediately from Lemma
  \ref{lem:Fprops}.
\end{proof}

\section{Collision rate equation and action}
\label{sec:CRE-action}

In this section, we rigorously define the notion of speed of a curve
$(f_t)_t$ associated to the formal Onsager operator $\mathcal K^B$. In
the next subsection we study the collision rate equation
\eqref{eq:CRE-pre} in a measure-valued framework replacing $f_t$ with
probability measures $\mu_t$ and $U_t$ with a family of signed
measures on $\R^d\times\R^d\times S^{d-1}$. In Subsection
\ref{sec:action} we study the action functional \eqref{eq:action-pre}
on measures and define the action of a curve.

\subsection{The collision rate equation}\label{sec:cre}
Let us set 
$$\Omega~=~\R^d\times\R^d\times S^{d-1}$$
and denote by $\cM(\Omega)$ the space of signed Borel measures with finite
variation on $\Omega$ equipped with the weak* topology in duality with
continuous functions vanishing at infinity. Recall that $\cP(\R^d)$
denotes the space of Borel probability measures on $\R^d$ equipped
with the topology of weak convergence in duality with bounded
continuous functions.

We define solutions to the collision rate equation in the following way.
\begin{definition}[Collision rate equation]\label{def:ce}
  We denote by $\CE_{T}$ the set of all pairs
  $(\mu,\cU)$ satisfying the following conditions:
  \begin{itemize}
  \item[(i)]  $\mu : [0,T] \to \cP(\R^d)$   is weakly continuous;
  \item[(ii)] $(\cU_t)_{t\in[0,T]}$ is a Borel family of measures in
    $\cM(\Omega)$;
  \item[(iii)] $\int_0^T\abs{\cU_t}(Y)\dd t~<~\infty$;
  \item[(iv)] for any $\phi\in C_b(\R^d)$ we have in the sense of distributions:
    \begin{align}\label{eq:cre-dist}
        \ddt\int\phi\dd\mu_t = \frac14\int\dgrad\phi\dd\cU_t\;.
    \end{align}
  \end{itemize}
  Moreover, we will denote by $\CE_T(\bar\mu_0,\bar\mu_1)$ the set of
  pairs $(\mu,\cU)\in\CE_T$ satisfying in addition:~
  $\mu_0=\bar\mu_0,\ \mu_1=\bar\mu_1$.
\end{definition}

Note that the integrability condition (iii) ensures that the right hand
side in (iv) is well-defined. The measures $\cU_t$ will be called
\emph{collision rates}.

\begin{remark}\label{rem:refined}
 If $(\mu,\cU)\in\CE_T$, then for any $\phi\in C_b(\R^d)$ and $0\leq t_0\leq t_1\leq T$ we have
  \begin{equation}\label{eq:cedistributionrefined}
    \int\varphi \dd\mu_{t_1}-\int\varphi \dd\mu_{t_0}~=~\frac14\int_{t_0}^{t_1}\int\dgrad\varphi \dd\cU_t\dd t\;.
  \end{equation}
  This follows readily from (iv) together with the continuity of
  $t\mapsto\mu_t$ in (i).

   The curve $(\mu_t)_{t\in[0,T]}$ is also
   absolutely continuous w.r.t.~the total variation norm. Indeed,
   from \eqref{eq:cedistributionrefined} we infer
   \begin{align*}
     \Big|\int \phi \dd(\mu_{t_1}-\mu_{t_0})\Big| \leq |\phi|_\infty\int_{t_0}^{t_1}|\cU_t|(\Omega)\dd t\;,
   \end{align*}
   and hence
   $||\mu_{t_1}-\mu_{t_0}||_{\text{TV}}\leq \int_{t_0}^{t_1}|\cU_t|\dd
   t$.
   Moreover, the distribution $\partial_t\mu_t$ on $[0,T]\times\R^d$
   is actually a signed measure with total variation bounded by
   $\int_0^T|\cU_t|(\Omega)\dd t$.
\end{remark}

\begin{remark}\label{rem:generaltest}
  The continuity equation can sometimes be tested against more
  general test functions. For instance, let $(\mu,\cU)\in\CE_T$ and
  let $\cU$ satisfy the stronger integrability condition
  \begin{align}\label{eq:more-integrability-U}
    \quad\qquad \int_0^T\int \big[\ip{v}^p+\ip{\vs}^p\big]\dd\abs{\cU_t}\dd t~<~\infty\;,
  \end{align}
  for some $p>0$. Then \eqref{eq:cedistributionrefined} holds for all $\phi:\R^d\to\R$
  continuous satisfying the growth condition~$|\phi(v)|\leq c\ip{v}^p$. This follows immediately by approximation with functions in $C_b$ and the trivial estimate $\ip{\vp}^p+\ip{\vsp}^p\leq C_p\big(\ip{v}^p+\ip{\vs}^p)$. If $\mu_t$ has density $f_t$ w.r.t.~ Lebesgue measure, we infer as above that 
  \begin{align*}
     \Big|\int \ip{v}^p\phi(v) \big(f_{t_1}(v)-f_{t_0}(v)\big)\dd v\Big| \leq C|\phi|_\infty\int_{t_0}^{t_1}\int\big[\ip{v}^p+\ip{\vs}^p\big]\dd|\cU_t|\dd t\;,
  \end{align*}
and hence $t\mapsto \ip{v}^p f_t$ is absolutely continuous in $L^1$.
\end{remark}
Next, we note that being a solution to the collision rate equation is
invariant under Maxwellian regularization.

Given $\mu\in\cP(\R^d)$, 
  we define its convolution with the
 Maxwellian $M$ as usual as the measure $\mu*M\in\cP(\R^d)$ given by
 \begin{align*}
   (\mu*M) (\dd v) = \int_{\R^d} M(v-w)\mu(\dd w)\dd v\;.
 \end{align*}
Given $\cU\in\cM(\R^{2d}\times S^{d-1})$ we define its
convolution $\cU*M$ with the Maxwellian $M$ in $\R^{2d}$ as the
measure given by
\begin{align*}
  (\cU*M)(\dd X,\dd\omega) = \int_{\R^{2d}}M(X-Y) \cU(\dd Y,\dd\omega)\dd X\;.
\end{align*}

\begin{lemma}\label{lem:ce-OU-invariance}
  Let $(\mu,\cU)\in\CE_T$ and set $\mu_t^\delta:=M_\delta*\mu_t$,
  $\cU^\delta_t:=M_\delta*\cU_t$ for $\delta\geq0$ and $t\in[0,T]$. Then we have
  $(\mu^\delta,\cU^\delta)\in\CE_T$.  
\end{lemma}

\begin{proof}
  Fix a test function $\phi$ and set
  $\Phi(X):=\phi(v)+\phi(\vs)$. Then, using
  \eqref{eq:commutation-scaling-convolution}, we find (dropping $\delta$ in the notation)
  \begin{align*}
    \ddt \int \phi \dd(\mu_t*M) &= \ddt \int (\phi*M)\dd\mu_t = \int \bar\nabla (\phi*M)\dd\cU_t\\
                                &= \int (\Phi*M)(T_\omega X)-(\Phi*M)(X) \dd\cU_t(X,\omega) \\
                                &= \int ((T_\omega\Phi)*M)(X)-(\Phi*M)(X) \dd\cU_t(X,\omega)\\
                                &= \int \Phi(T_\omega X)-\Phi(X)\dd(\cU_t*M)(X,\omega) = \int\bar\nabla\phi\dd(\cU_t*M)\;,
  \end{align*}
  which shows that $(\mu*M,\cU*M)\in \CE_T$.
\end{proof}

\subsection{The action functional}\label{sec:action}

Let us first recall the definition of the logarithmic mean $\Lambda:\R_+\times\R_+\to\R_+$ given by
\begin{align}\label{eq:log-mean}
  \Lambda(s,t)~=~\int_0^1s^\alpha t^{1-\alpha}\dd \alpha~=~\frac{s-t}{\log s-\log t}\ ,
\end{align}
the latter expression being valid for positive $s\neq t$. Note that
$\Lambda$ is concave and positively homogeneous, i.e.
$\Lambda(\a s,\a t)=\a\Lambda(s,t)$ for all $\a\geq0$. Moreover it is
easy to check that
\begin{equation}\label{eq:log-arith-ineq}
\Lambda(s,t)~\leq~\frac{s+t}{2}\quad\forall s,t\geq0\ .
\end{equation}

Given a function $f:\R^d\to\R_+$ we will often write
\begin{align*}
  \Lambda(f)(v,\vs,\omega)~=~\Lambda(f\fs,\fp\fsp)\;.
\end{align*}
We can now define a function $\alpha : \R_+\times\R_+\times\R \to[0,\infty]$ by setting
\begin{align}\label{eq:def-alpha}
 \alpha(s,t,u) :=  \left\{ \begin{array}{ll}
  \frac{u^2}{4\Lambda(s,t)}\;,
  & \Lambda(s,t) \neq 0\;,\\
0\;,
  &  \Lambda(s,t) = 0\text{ and } u = 0 \;,\\ 
+ \infty\;,
  & \Lambda(s,t) = 0 \text{ and } u \neq 0\;.\end{array} \right.
\end{align}
The function $\alpha$ is lower semicontinuous, convex and
  positively homo\-gen\-eous, i.e. for all $u\in\R$, $s,t\geq0$, and $r>0$ we have $\alpha(r s,r t,r u)~=~r \alpha(s,t,u)$. Indeed, this is easily checked using homogeneity and concavity of $\Lambda$
  and the convexity of the function $(u,y)\mapsto \frac{u^2}{y}$ on
  $\R \times (0,\infty)$.

  We will now define an action functional on pairs of measures
  $(\mu,\cU)$ where $\mu\in\cP(\R^d)$ and $\cU\in\cM(\Omega)$ generalizing
  \eqref{eq:action-pre}. For later reference, we work first in a more
  general setting.

  We consider the following integral functional associated with the
  function $\alpha$ on the space $\cM(X;\R^3)$ of vector-valued Borel
  measures with finite variation on a locally compact Polish space
  $X$ as defined in \eqref{eq:int-funct}, i.e.~we set:
  \begin{align}\label{eq:F-gen}
    \cF_\alpha(\lambda) := \int \alpha\left(\frac{\dd\lambda^1}{\dd|\lambda|},\frac{\dd\lambda^2}{\dd|\lambda|},\frac{\dd\lambda^3}{\dd|\lambda|}\right)\dd|\lambda|\;,
  \end{align}
where $|\lambda|$ denotes the variation of $\lambda$.
 
\begin{definition}[Action]\label{def:action}
 For $\mu\in\cP(\R^d)$ and $\cU\in\cM(\Omega)$ the \emph{action} is defined by
 \begin{align}\label{eq:action-def}
   \cA(\mu,\cU) := \cF_\alpha(\mu^1,\mu^2,\cU)\;,
 \end{align}
where $\mu^1,\mu^2$ are non-negative measures in $\cM_+(\Omega)$ given by
 \begin{align}\label{eq:defmu12}
  \mu^1(\dd v,\dd \vs,\dd \omega) := B(v-\vs,\omega)\mu(\dd v)\mu(\dd\vs)\dd\omega\;,\quad \mu^2 := T_\#\mu^1\;,
\end{align}
where $T$ is the change of variables
$(v,\vs,\omega)\mapsto (T_\omega(v,\vs),\omega)$ between pre- and
post-collisional variables defined in \eqref{eq:pre-post2}.
\end{definition}

If the measure $\mu$ is absolutely continuous w.r.t.~the Lebesgue
measure $\cL$ on $\R^d$, the next lemma shows that we recover
\eqref{eq:action-pre}. For this we denote by $\cB\in\cM(\Omega)$ the
measure given by
\begin{align*}
  \cB(\dd v,\dd\vs,\dd \omega)=B(v-\vs,\omega)\dd
  v\dd\vs\dd\omega\;.
\end{align*}

\begin{lemma}\label{lem:densities}
  Let $\mu=f\cL\in\cP(\R^d)$ and $\cU\in\cM(\Omega)$ be such that
  $\cA(\mu,\cU)<\infty$. Then there exists a Borel function $U:\Omega\to\R$ such
  that $\cU=U\Lambda(f) \cB$ and we have
  \begin{align}\label{eq:densities}
    \cA(\mu,\cU)~=~\frac14\int\abs{U(v,\vs,\omega)}^2\Lambda(f)B(v-\vs,\omega)\dd v\dd\vs\dd\omega\ .
  \end{align}
\end{lemma}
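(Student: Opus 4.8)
The plan is to understand precisely when $\cA(\mu,\cU)<\infty$ forces $\cU$ to be absolutely continuous with respect to $\Lambda(f)\cB$, and then to compute the action in that representation. Fix a reference measure $\tau$ with $\mu^1=f^1\tau$, $\mu^2=f^2\tau$, $\cU=U_0\tau$; for instance $\tau=|\mu^1|+|\mu^2|+|\cU|$. Recall from \eqref{eq:defmu12} that $\mu^1=B(v-\vs,\omega)f(v)f(\vs)\,\dd v\dd\vs\dd\omega$ and $\mu^2=T_\#\mu^1$. Since $T_\omega$ is involutive with unit Jacobian and preserves $|v|^2+|\vs|^2$, one checks $\mu^2=B(v-\vs,\omega)f(\vp)f(\vsp)\,\dd v\dd\vs\dd\omega=\fp\fsp\,\cB$. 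Hence both $\mu^1=f\fs\,\cB$ and $\mu^2=\fp\fsp\,\cB$ are absolutely continuous w.r.t.\ $\cB$, and in the $\tau$-representation $f^1$ and $f^2$ are, up to the common density $\dd\cB/\dd\tau$, equal to $f\fs$ and $\fp\fsp$.

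Next I would use the third alternative in the definition \eqref{eq:def-alpha} of $\alpha$: whenever $\Lambda(f^1,f^2)=0$ but $U_0\neq 0$ we have $\alpha(U_0,f^1,f^2)=+\infty$. Since $\cA(\mu,\cU)=\int\alpha(U_0,f^1,f^2)\dd\tau<\infty$, the set where $\Lambda(f^1,f^2)=0$ and $U_0\neq 0$ is $\tau$-null; equivalently $\cU$ is concentrated on $\{\Lambda(f^1,f^2)>0\}$. By homogeneity of $\Lambda$, $\Lambda(f^1,f^2)>0$ is the same as $\Lambda(f\fs,\fp\fsp)>0$, i.e.\ $\Lambda(f)>0$, $\cB$-a.e.\ on the relevant set. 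Therefore $\cU\ll\Lambda(f)\,\cB$: indeed $\cU\ll\tau$ and on the support of $\cU$ we may write $\tau$-a.e.\ $\dd\tau = \big(\dd\tau/\dd\cB\big)\dd\cB$ with $\dd\tau/\dd\cB$ finite and positive, and $f^1 = (f\fs)\,\dd\cB/\dd\tau$, so $\cU = U_0\,\dd\tau = U_0\,(\dd\tau/\dd\cB)\,\dd\cB = U\,\Lambda(f)\,\cB$ with $U := U_0\,(\dd\tau/\dd\cB)/\Lambda(f)$ well-defined and finite $\cB$-a.e.\ on $\{\Lambda(f)>0\}$, and we set $U=0$ elsewhere (which is consistent since there $\cU$ puts no mass).

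Finally, with this $U$, I would evaluate the action using positive homogeneity of $\alpha$ (Lemma \ref{lem:actiondensity}). Choosing $\tau=\cB$ is not legitimate in general because $\cB$ need not dominate $\cU$, but one can argue on $\{\Lambda(f)>0\}$, where $\cB$, $\mu^1$, $\mu^2$ and $\cU$ are all absolutely continuous w.r.t.\ $\cB$ with densities $1$, $f\fs$, $\fp\fsp$ and $U\Lambda(f)$ respectively, while on the complement all four measures vanish and contribute nothing. Hence
\begin{align*}
  \cA(\mu,\cU) = \int_{\{\Lambda(f)>0\}} \alpha\big(U\Lambda(f),\,f\fs,\,\fp\fsp\big)\,\dd\cB
  = \int_{\{\Lambda(f)>0\}} \frac{\big(U\Lambda(f)\big)^2}{4\,\Lambda(f\fs,\fp\fsp)}\,\dd\cB,
\end{align*}
and since $\Lambda(f\fs,\fp\fsp)=\Lambda(f)$ by definition this simplifies to $\frac14\int |U|^2\Lambda(f)\,B(v-\vs,\omega)\,\dd v\dd\vs\dd\omega$, which is \eqref{eq:densities}.

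The main obstacle is the bookkeeping in the change-of-reference-measure step: one must be careful that the Radon--Nikodym derivatives are chosen consistently on the various overlapping null sets, and in particular that $\Lambda(f)$, defined $\cB$-a.e., can legitimately serve as the density of $\cU$ relative to $\cB$ on the set where $\cU$ actually lives. Once one observes that $\cA(\mu,\cU)<\infty$ confines $\cU$ to $\{\Lambda(f)>0\}$, everything else is the positive homogeneity of $\alpha$ together with the identity $\mu^2=\fp\fsp\,\cB$, which is exactly the content of Lemma \ref{lem:OU-commutation}-style change of variables.
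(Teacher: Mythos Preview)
Your argument is correct and follows essentially the same route as the paper: use the $+\infty$ branch of $\alpha$ to force $\cU$ to vanish on $\{\Lambda=0\}$, hence $\cU\ll\Lambda(f)\cB$, then use positive homogeneity of $\alpha$ to obtain \eqref{eq:densities}. The only organizational difference is that the paper chooses a reference measure $\lambda$ dominating both $\cB$ and $\cU$, which makes the key identity $\Lambda(\tilde\rho^1,\tilde\rho^2)\dd\lambda=\Lambda(f)\dd\cB$ (by homogeneity) immediate and avoids the bookkeeping you worry about at the end; your step ``$\dd\tau=(\dd\tau/\dd\cB)\dd\cB$ on the support of $\cU$'' is valid (since on $\{\Lambda(f^1,f^2)>0\}$ one has $f^1>0$, and $\mu^1\ll\cB$ forces $\tau\ll\cB$ there), but the paper's choice of reference measure sidesteps the need to argue this.
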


\begin{proof}
  Note that $\mu^i=\rho^i\cB,\ i=1,2$ with
  $\rho^1(v,\vs,\omega)=f(v)f(\vs)$ and
  $\rho^2(v,\vs,\omega)=f(\vp)f(\vsp)$. Choose $\sigma\in\cM(\Omega)$ such that $\cB=h\sigma$ and
  $\cU=\tilde U\sigma$ are both absolutely continuous
  w.r.t.~$\sigma$ and denote by
  $\tilde\rho^i$ the density of $\mu^i$ w.r.t $\sigma$. Now by
  homogeneity of $\alpha$
  \begin{align}\label{eq:densities1} 
   \cA(\mu,\cU)~=~\int\a\big(\tilde\rho^1,\tilde\rho^2,\tilde U\big)\dd\sigma~<~\infty\ .
  \end{align}
  Let $A\subset \Omega$ be such that $\int_A\Lambda(\rho^1,\rho^2)\dd
  \cB=0$. Homogeneity of $\Lambda$ yields
  \begin{align*}
   0~=~\int_A\Lambda(\rho^1,\rho^2)\dd \cB~=~\int_A\Lambda(\tilde\rho^1,\tilde\rho^2)\dd\sigma\ ,
  \end{align*}
  i.e.~$\Lambda(\tilde\rho^1,\tilde\rho^2)=0$ $\sigma$-a.e.~on
  $A$. Now the finiteness of the integral in \eqref{eq:densities1}
  implies that $\tilde U=0$ $\sigma$-a.e.~on $A$. Thus $|\cU|(A)=0$
  and hence $\cU$ is absolutely continuous w.r.t.~the measure
  $\Lambda(f)\cB$. Formula \eqref{eq:densities} now follows
  immediately from the homogeneity of $\a$.
\end{proof}

In view of the previous lemma, given a pair of functions
$f:\R^d\to\R_+$ and $U:\Omega\to\R$ we will define their action via
$\cA(f,U) := \cA(\mu,\cU)$ with $\mu=f\cL$ and
$\cU=U\Lambda(f)\tilde \cB$.

Next, we establish lower semicontinuity of the action
w.r.t.~convergence of $\mu$ and $\cU$. 

\begin{lemma}[Lower semicontinuity of the action]\label{lem:lscaction}
  Assume that $\mu_n\rightharpoonup\mu$ weakly in $\cP(\R^d)$ and
  $\cU_n\rightharpoonup^*\cU$ weakly* in $\cM(\Omega)$. Then
  \begin{align*}
    \cA(\mu,\cU)~\leq~\liminf\limits_{n}\cA(\mu_n,\cU_n)\ .
  \end{align*}
\end{lemma}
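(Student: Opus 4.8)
The plan is to prove lower semicontinuity of $\cA$ via the standard duality/convex-integrand technique, exploiting that $\a$ is lower semicontinuous, convex and positively homogeneous (Lemma \ref{lem:actiondensity}). The key point is to rewrite $\cA(\mu,\cU)$ as a supremum of linear functionals in the triple of measures $\lambda = (\cU,\mu^1,\mu^2)$, each of which is continuous under the relevant weak convergences; lower semicontinuity of a supremum of continuous functionals is then automatic.

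\emph{Step 1: A dual representation of $\a$.} Since $\a:\R\times\R_+\times\R_+\to\R_+\cup\{\infty\}$ is lower semicontinuous, convex and positively $1$-homogeneous, it is the support function of a closed convex set; concretely, one checks by a direct computation that
\begin{align*}
  \a(u,s,t)~=~\sup\Big\{ au + bs + ct ~:~ (a,b,c)\in K \Big\}\;,
\end{align*}
where $K \subset \R^3$ is the closed convex set $K = \{(a,b,c): b+c + a^2 \le 0\}$ (equivalently $b + c \le -a^2$), or rather $K = \{(a,b,c) : \tfrac14 a^2 \le -b, \, \ldots\}$ — the precise shape follows from the identity $\tfrac{u^2}{4y} = \sup_{a}\{ au - a^2 y\}$ for $y>0$ applied with $y = \Lambda(s,t)$, together with concavity and homogeneity of $\Lambda$ to absorb the $\Lambda(s,t)$ term into a linear expression $bs+ct$ ranging over the subgradient of $-\Lambda$. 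I would verify that this supremum reproduces all three cases in \eqref{eq:def-alpha}, including the value $+\infty$ when $\Lambda(s,t)=0$ and $u\neq0$.

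\emph{Step 2: From pointwise duality to a functional representation.} Using the representation of $\cA$ as $\int \a\big(\tfrac{\dd\lambda_1}{\dd|\lambda|}, \tfrac{\dd\lambda_2}{\dd|\lambda|}, \tfrac{\dd\lambda_3}{\dd|\lambda|}\big)\dd|\lambda|$ and the theory of convex functionals of measures (e.g.\ \cite{AGS08} or the classical result of Ioffe/Rockafellar on integral functionals), one obtains
\begin{align*}
  \cA(\mu,\cU)~=~\sup\left\{ \int_G \big(a\,\dd\cU + b\,\dd\mu^1 + c\,\dd\mu^2\big) ~:~ (a,b,c)\in C_c(G;\R^3)\;,\ (a,b,c)(x)\in K \ \forall x \right\}\;.
\end{align*}
Here it is important that $K$ is closed and convex so the pointwise constraint passes between the integrand formulation and the test-function formulation; restricting test functions to be continuous with compact support is harmless by inner regularity.

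\emph{Step 3: Passing to the limit.} Fix an admissible test triple $(a,b,c)\in C_c(G;\R^3)$ with values in $K$. The functional $(\mu,\cU)\mapsto \int a\,\dd\cU$ is continuous for weak* convergence $\cU_n\rightharpoonup^*\cU$ in $\cM(G)$ since $a\in C_c(G)$. For the terms $\int b\,\dd\mu^1$ and $\int c\,\dd\mu^2$: by definition \eqref{eq:defmu12}, $\int b\,\dd\mu^1 = \int b(v,\vs,\omega)B(v-\vs,\omega)\,\mu(\dd v)\mu(\dd\vs)\dd\omega$, and by Assumption \ref{ass:standing} the map $(v,\vs)\mapsto \int b(v,\vs,\omega)B(v-\vs,\omega)\dd\omega$ is continuous and bounded, so $\mu_n\rightharpoonup\mu$ weakly in $\cP(\R^d)$ forces $\mu_n\otimes\mu_n\rightharpoonup\mu\otimes\mu$ and hence $\int b\,\dd\mu_n^1\to\int b\,\dd\mu^1$; the same argument applies to $\mu^2=T_\#\mu^1$ since $b\circ T^{-1}$ has the same structure and $T$ preserves $|v|^2+|\vs|^2$. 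Therefore each linear functional in the supremum of Step 2 is continuous along the sequence, so
\begin{align*}
  \int a\,\dd\cU + b\,\dd\mu^1 + c\,\dd\mu^2~=~\lim_n \Big(\int a\,\dd\cU_n + b\,\dd\mu_n^1 + c\,\dd\mu_n^2\Big)~\leq~\liminf_n \cA(\mu_n,\cU_n)\;.
\end{align*}
Taking the supremum over all admissible $(a,b,c)$ gives $\cA(\mu,\cU)\le\liminf_n\cA(\mu_n,\cU_n)$.

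The main obstacle I anticipate is Step 1–2: getting the dual representation of $\a$ exactly right, in particular making sure the "linearization" of the $\Lambda(s,t)$ in the denominator — which is itself only concave, not linear — yields a supremum over a fixed closed convex set $K$ that correctly reproduces the $+\infty$ branch, and then justifying the interchange of pointwise supremum and integral. This is where one needs to invoke the measure-theoretic convex duality carefully (lower semicontinuity and $1$-homogeneity of $\a$ are exactly what make it work), rather than the soft weak-convergence argument of Step 3, which is routine given Assumption \ref{ass:standing}.
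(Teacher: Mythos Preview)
Your proposal is correct and follows essentially the same approach as the paper: the paper observes that Assumption~\ref{ass:standing} yields $\mu_n^i\rightharpoonup^*\mu^i$ (your Step~3) and then invokes a general lower semicontinuity result for convex $1$-homogeneous integral functionals of measures (Buttazzo, stated as Proposition~\ref{prop:lsc}), which packages your Steps~1--2 as a black box. Your anticipated obstacle---computing the dual set $K$ explicitly and justifying the supremum/integral interchange---is thereby sidestepped entirely by citing the abstract result.
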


\begin{proof}
  Note that by the Assumption \ref{ass:kernel-bounded} on the collision
  kernel $B$, the weak convergence of $\mu_n$ to $\mu$ implies the
  weak* convergence of $\mu^i_n$ to $\mu^i$ in $\cM(\Omega)$ for
  $i=1,2$. Now the claim follows immediately from Lemma \ref{lem:Fprops}.
\end{proof}

The next estimate will be useful at several points in the paper. For
later reference, we formulate it in the general context of
\eqref{eq:F-gen}.

\begin{lemma}[Integrability estimate]\label{lem:integrability}
  For any Borel function $\Psi:X\to\R_+$ and
  $\lambda\in\cM(X;\R^3)$ with $\cF_\alpha(\lambda)<\infty$ and $\lambda^1,\lambda^2$ non-negative measures we have
  \begin{align}\label{eq:integrability-gen}
    \int \Psi\dd\abs{\lambda^3} \leq \sqrt{2\cF_\alpha(\lambda)} \left(\int \Psi^2 \dd (\lambda^1+\lambda^2)\right)^{\frac12}\;.
  \end{align}
\end{lemma}

\begin{proof}
  Let us write $\lambda^i=\rho^i|\lambda|$. Since
  $\cF_\alpha(\lambda)$ is finite, the set
  $A=\{\alpha(\rho^1,\rho^2,\rho^3)=\infty\}$ has zero measure with
  respect to $|\lambda|$. We can now estimate:
\begin{equation*}
\begin{split}
  &\int \Psi\dd \abs{\lambda^3}~\leq~\int \Psi\abs{\rho^3}\dd|\lambda| =~2\int_{A^c}\Psi \sqrt{\Lambda(\rho^1,\rho^2)}\sqrt{\a(\rho^1,\rho^2,\rho^3)}\dd|\lambda|\\
  &\leq~2\left(\int\limits\a(\rho^1,\rho^2,\rho^3)\dd|\lambda|\right)^{\frac{1}{2}}\left(\int_{A^c} \Psi^2\Lambda(\rho^1,\rho^2)\dd|\lambda|\right)^{\frac{1}{2}}\\
  &\leq~\sqrt{2\cF_\alpha(\lambda)}\left(\int \Psi^2\dd (\lambda^1+\lambda^2)\right)^{\frac12}\;,
 \end{split}  
\end{equation*}
where last inequality follows from the estimate
\eqref{eq:log-arith-ineq}.
\end{proof}

  \begin{corollary}\label{cor:more-integrability}
    Let $(\mu,\cU)\in\CE_T$ be such that
    $A:=\int_0^T\cA(\mu_t,\cU_t)\dd t$ and
    $E:=\int_0^T\cE_{2p+\gamma_+}(\mu_t)\dd t$ are finite for some $p>0$
    where $\gamma_+=\max(\gamma,0)$. Then the integrability condition
    \eqref{eq:more-integrability-U} is satisfied, precisely
  \begin{align*}
   \int_0^T \int \big[\ip{v}^p +\ip{\vs}^p\big]\dd\abs{\cU_t}\dd t
   \leq
   \sqrt{AC_BC_{p,\gamma}E}\;.
  \end{align*}
\end{corollary}

\begin{proof}
  Let $\mu^i,\cU\in\cM(\Omega\times[0,T])$ be given by $\dd\mu^i=\dd\mu^i_t\dd t$ and $\dd\cU=\dd\cU_t\dd t$ and note that
  \begin{align*}
    \int_0^T\cA(\mu_t,\cU_t)\dd t = \int_0^T\cF_\alpha(\mu^1_t,\mu^2_t,\cU_t)\dd t  =\cF_\alpha(\mu^1,\mu^2,\cU)\;.
  \end{align*}
  Then, one concludes by Lemma \ref{lem:integrability}, choosing $\Psi(v,\vs,\omega,t)= \ip{v}^p+\ip{\vs}^p$.
\end{proof}

Note that for a given curve $(\mu_t)_{t\in[0,T]}$ there will be
several compatible collisions rates $(\cU_t)_t$ such that
$(\mu,\cU)\in\CE_T$. For instance, when $\cV_t$ is symmetric under the
transformation $(v,\vs,\omega)\mapsto(\vp,\vsp,\omega)$ we have
$\int\bar\nabla\phi\dd\cV_t=0$ for any test function $\phi$. Hence,
$(\mu,\cU+\cV)\in\CE_T$ whenever $(\mu,\cU)\in\CE_T$. Thus, we define
the action of a curve as the minimal action of all compatible
collision rates.

\begin{definition}[Action of a curve]\label{def:action-curve}
  Given a curve $(\mu_t)_{t\in[0,T]}$ in $\cP(\R^d)$ its \emph{action}
  is defined by
  \begin{align}\label{eq:action-curve-def}
    \cA_T(\mu) := \inf\left\{\int_0^T\cA(\mu_t,\cU_t)\dd t~:~(\mu,\cU)\in\CE_T\right\}\;.
  \end{align}
  If there is no $\cU$ with $(\mu,\cU)\in\CE_T$, we set $\cA_T(\mu)=+\infty$.
\end{definition}

The next result shows that under additional control on the energy of
the curve the infimum above is attained by an optimal collision rate.

\begin{proposition}[Optimal collision rate]\label{prop:opt-collision-rate}
  Let $(\mu_t)_{t\in[0,T]}$ be a curve in $\cP(\R^d)$ such that
  \begin{align}\label{eq:act-energy-bdd}
   \cA_T(\mu)<\infty\;,\qquad E:=\int_0^T\cE_{2}(\mu_t)\dd t<\infty\;. 
  \end{align}
  Then, there exists a family $(\cU_t)_{t}$ with
  $(\mu,\cU)\in\CE_T$ attaining the infimum in
  \eqref{eq:action-curve-def}.
\end{proposition}

\begin{proof}
  Let $(\cU^n_t)_t$ be a minimizing sequence of collision rates for
  \eqref{eq:action-curve-def} and define the measures
  $\cU^n\in\cM(\Omega\times[0,T])$ given by $\dd\cU^n=\dd\cU^n_t\dd t$. By
  Lemma \ref{lem:integrability}, for every measurable function $\Psi$ on
  $\R^{2d}\times S^{d-1}\times [0,T]$ we have
  \begin{align}\label{eq:unif-integrability3}
    &\sup\limits_n\int \Psi\dd \abs{\cU^n}\\\nonumber
&\leq\sqrt{2A}\left(\int\big(\Psi^2+\Psi^2\circ T)B(v-\vs,\omega)\dd\omega\dd\mu_t(v)\dd\mu_t(\vs)\dd t\right)^{\frac12}\;,
  \end{align}
  with $A=\sup_n\int_0^T\cA(\mu_t,\cU^n_t)\dd t<\infty$. Choosing
  $\Psi=\one_{\Omega\times I}$ and using Assumption
  \ref{ass:kernel-bounded}, we obtain
  $\abs{\cU^n}(\Omega\times I)\leq 2\sqrt{C_B A E\cdot \cL(I)}$.  Hence,
  $\cU^n$ has uniformly bounded variation and we have that up to
  extracting a subsequence $\cU^n\rightharpoonup^*\cU$ in
  $\cM(\Omega\times[0,T])$. Moreover, we see that $\cU$ can be
  disintegrated w.r.t.~Lebesgue measure on $[0,T]$ and we can write
  $\cU=\int_0^T\cU_t\dd t$ for a Borel family $(\cU_t)$ still
  satisfying (iii) in Definition~\ref{def:ce}.

  To see that $(\mu,\cU)\in\CE_T$, it suffices to show that for any
  test functions $a\in C\big([0,T]\big)$ and $\phi\in C_b(\R^d)$ we have
   \begin{align}\label{eq:converge-bnu2}
    \int a(t)\dgrad\phi \dd\cU^n_t
    \dd t~\overset{n\to\infty}{\longrightarrow}~\int a(t)\dgrad\phi
    \dd\cU_t \dd t\;.
  \end{align}
  This follows from a straightforward argument, approximating
  $\dgrad \phi$ with compactly supported continuous functions $\Omega$ once
  we establish the following tightness estimate for $\cU^n$:
  Denoting by $B_R$ the ball of radius $R$ in $\R^{2d}$ and $M_R:=B_R^c\times S^{d-1}\times[0,T]$ we have
  \begin{align*}
  \abs{\cU^n}(M_R)
  \leq
  2\sqrt{AC_B}\left(\int_{0}^{T}\int_{B_{R/2}^c}\ip{v}^\gamma+\ip{\vs}^\gamma\dd\mu_t(v)\dd\mu_t(\vs)\dd t\right)^{\frac12}\leq~2\frac{\sqrt{AC_BE}}{\sqrt{R}}\;,
  \end{align*}
  which goes to zero uniformly in $n$ as $R\to\infty$.  This estimate
  follows again from \eqref{eq:unif-integrability3}, noting that if
  $(v,\vs)$ or $(\vp,\vsp)$ lies outside $B_R$, then $(v,\vs)$ lies
  outside of $B_{R/2}$, and further using the estimate
  $\int_{\{|v|\geq R\}}\ip{v}^\gamma\dd\mu_t(v)\leq \int
  \frac{\ip{v}^2}{R}\dd\mu_t(v)$, since $\gamma\leq 1$, and the upper
  bound on the energy in \eqref{eq:act-energy-bdd}.  Finally, we
  conclude that $\int_0^T\cA(\mu_t,\cU_t)\dd t=\cA(\mu)$ noting that
  $\int_0^T\cA(\mu_t,\cU_t)\dd t=\cF_\alpha(\mu^1,\mu^2,\cU)$ and
  using lower semicontinuity of $\cF_\alpha$.
\end{proof}

\section{Variational characterization of the homogeneous Boltzmann
  equation}
\label{sec:gradflow}

In this section we establish the variational characterization of the
homogeneous Boltzmann equation stated in Theorem \ref{thm:EDI-intro}. The crucial ingredient is a chain rule allowing to take derivatives of the entropy along suitable curves of finite action.

Recall that $\cE_p(\mu)$ denotes the $p$-moment of $\mu$.

\begin{proposition}[Chain rule]\label{prop:chainrule}
  Let $(\mu,\cU)\in\CE_T$ with $(\mu_t)_t\subset \mathcal P_p(\mathbb R^d)$ such that $\cH(\mu_t)$ is finite for some $t\in[0,T]$,
  \begin{align*}
   E:= \int_0^T\cE_{p}(\mu_t)\dd t<\infty\;,
  \end{align*}
  where $p=2+\max(\gamma,0)$ and
  \begin{align}\label{eq:finite-D-A}
    \int_0^T\sqrt{\cA(\mu_t,\cU_t)}\dd t < \infty\;,\quad
\int_0^T\sqrt{D(\mu_t)}\sqrt{\cA(\mu_t,\cU_t)}\dd t < \infty\;.
  \end{align}
  Then $\cH(\mu_t)<\infty$ for all $t\in[0,T]$ and we have that
  \begin{align}\label{eq:chainrule}
    \cH(\mu_t)-\cH(\mu_s) = \int_s^t\frac14\int_{\{\Lambda(f_{r}>0\}}\dgrad\log f_r \dd\cU_r\dd r\quad\forall 0\leq s\leq t\leq T\;,
  \end{align}
  where $f_r$ is the density of $\mu_r$. In particular, the map
  $t\mapsto \cH(\mu_t)$ is absolutely continuous and we have
  \begin{align}\label{eq:chainrule-diff}
    \frac{\dd}{\dd t}\cH(\mu_t) = \frac14\int\dgrad\log f_t \dd\cU_t\quad\text{ for a.e.~}t\;.
  \end{align}
\end{proposition}

Note that the assumption \eqref{eq:finite-D-A} and Lemma \ref{lem:densities} imply that for a.e.~$t$ $\mu_{t}$ is absolutely continuous with a density $f_{t}$ and $\cU_{t}$ is absolutely continuous with a density $U_{t}\Lambda(f_{t})B$, in particular the set of $(v,\vs,\omega)$ where $\Lambda(f_{t})=0$ is negligible for $\cU_{t}$. Hence the right hand side in \eqref{eq:chainrule} is well-defined since $f,\fs,\fp,\fsp>0$ on $\{\Lambda(f)>0\}$. More precisely, this and similar integrals in the sequel will be understood implicitly to be taken over the set $\{\Lambda(f_{t})>0)\}$.

As a preparatory result we establish the following continuity property
of the action and dissipation under Maxwellian convolution.

\begin{lemma}\label{lem:AD-conv}
  Let $\mu\in \cP(\R^d)$ and $\cU\in \cM(\Omega)$ such that $\cA(\mu,\cU)<\infty$ and $D(\mu)<\infty$. Let $\mu^\delta=M_\delta*\mu$ and $\cU^\delta=M_\delta*\cU$ denote convolution with the Maxwellian. Then we have
  \begin{equation}
    \label{eq:AD-conv}
    \lim_{\delta\to0}\cA(\mu^\delta,\cU^\delta)=\cA(\mu,\cU)\;,\qquad \lim_{\delta\to0}D(\mu^\delta)=D(\mu)\;.
  \end{equation}
  Moreover, there is a constant $C$ depending only on $\gamma$ and $c_B$
  from Assumption \ref{ass:kernel-bounded} such that
  \begin{equation}
    \label{eq:AD-conv-bound}
    \cA(\mu^\delta,\cU^\delta)\leq C\cA(\mu,\cU)\;,\quad D(\mu^\delta)\leq C D(\mu)\quad \forall \delta>0\;.
  \end{equation}
\end{lemma}

In the proofs of the latter two results, we took inspiration from
\cite{CDDW20} to treat the case of unbounded kernels $B$. Namely in
the usage of the Petree inequality and the following version of the
dominated convergence theorem (see e.g.~\cite[Chap.~4,
Thm.~17]{Roy88}), termed \emph{extended dominated convergence
  theorem}: Let $(R_\delta)_{\delta>0}$ and $(I_\delta)_{\delta>0}$ be
families of measurable functions on a measure space $X$ with
$I_\delta\geq0$ and let $R,I$ be measurable. Assume that
$R^\delta,I^\delta$ converge point-wise to $R,I$ respectively,
$|R^\delta|\leq I^\delta$ a.e., and
$\lim_{\delta\to0}\int_XI^\delta=\int_XI$. Then we also have
$\lim_{\delta\to0}\int_X R^\delta=\int_XR$.

\begin{proof}[Proof of Lemma \ref{lem:AD-conv}]
  We first prove \eqref{eq:AD-conv}, \eqref{eq:AD-conv-bound} for the
  dissipation $D$. Let us $\mu=f\dd v$ and $\cU=U\dd X \dd\omega$ and
  put $F(X)=ff_*$. Similarly let $f^\delta=M_\delta*f$,
  $F^\delta=M_\delta*F$, $U^\delta=M_\delta*U$ be the densities of
  $\mu^\delta$, $\mu^\delta\otimes\mu^\delta_*$ and $\cU^\delta$. Now,
  \begin{align*}
    D(\mu^\delta)=\int B\Lambda(f^\delta)|\bar\nabla f^\delta|^2\dd X\dd\omega
    = \int B G(F^\delta,T_\omega F^\delta)\dd X\dd \omega=:\int L^\delta_1\dd X\dd\omega\;,
  \end{align*}
  where $G(x,y)=(x-y)\big(\log x-\log y)$ is convex. Note that
  $L^\delta_1(X,\omega)$ converges point-wise to
  $L(X,\omega):=B\cdot G(F,T_\omega F)$ as $\delta\to0$ and
  $\int L\dd X\dd\omega=D(\mu)$. From the commutation property of
  Lemma \ref{lem:OU-commutation} and Jensen's inequality we infer the
  majorant
      \begin{align*}
        L^\delta_1\leq B\cdot\big( M_\delta*\big[G(F,T_\omega F)\big]\big)=:L^\delta_2\;.
      \end{align*}
      Obviously also $L^\delta_2\to L$ point-wise as $\delta\to0$. To
      prove the continuity \eqref{eq:AD-conv} it suffices by the
      extended dominated convergence theorem to show that
      $\int L^\delta_2\dd X\dd\omega\to \int L\dd X\dd\omega$. But by self-adjointness of the convolution  we have that
      \begin{align*}
        \int L^\delta_2\dd X\dd\omega=\int B\cdot\big( M_\delta *\big[G(F,T_\omega F)\big]\big)\dd X\dd\omega
        = \int \big(M_\delta * B\big)\cdot G(F,T_\omega F)\dd X\dd\omega:=\int L^\delta_3\dd X\dd\omega\;,
      \end{align*}
      and again $L^\delta_3\to L$. Now, by Assumption \ref{ass:kernel-bounded} and Lemma \ref{lem:kernel-conv} we have
      \begin{align*}
        \big(M_\delta * B\big)(X,\omega)&\leq c_B \int \ip{v-v_*-(w-w_*)}^\gamma M_\delta(w)M_\delta(w_*)\dd w\dd w_*\\ &\leq C_\gamma c_B \ip{v-v_*}^\gamma\leq C_\gamma c_B^2 B(X,\omega)\;.
      \end{align*}
      Hence, we have a majorant $L^\delta_3\leq C L$ and dominated
      convergence yields
      $\int L^\delta_2\dd X\dd\omega=\int L^\delta_3\dd X\dd\omega \to
      \int L\dd X\dd\omega$ as desired. Note that the previous
      argument also yields the bound \eqref{eq:AD-conv-bound}.

      To prove the corresponding claims for the action $\cA$, we proceed in the same way,
      writing
      \begin{align*}
        \cA(\mu^\delta,\cU^\delta) = \int B^{-1}\frac{|U^\delta|^2}{\Lambda(F^\delta,T_\omega F^\delta)}\dd X\dd\omega\;,
      \end{align*}
      and use convexity of the function $(u,r,s)\to|u|^2/\Lambda(r,s)$ as well as in the last step that similarly $M^\delta *B^{-1}\leq C B^{-1}$. 
    \end{proof}

\begin{proof}[Proof of Proposition \ref{prop:chainrule}]
  Note that by \eqref{eq:finite-D-A} and Lemma \ref{lem:densities} we
  have $\mu_r=f_r\dd v$, $\cU_r=U_r\dd X\dd\omega$ for a.e.~$r$ and
  suitable densities $f_r,U_r$. We will now proceed in several steps.

{\it Step 1: Regularization.}

We will perform a three-fold regularization procedure. First, we
regularize the curve by convolution with the Maxwellian. For
$\delta>0$ we set $\mu_t^\delta=M_\delta*\mu_t$, and
$\cU_t^\delta=M_\delta*\cU_t$. Then we perform a convolution in
time. For a standard mollifier $\eta$ on $\R$ supported in $[-1,1]$
and $\lambda>0$ we define
  \begin{align*}
    \mu^{\delta,\lambda}_t = \int \eta(t')\mu^{\delta}_{t-\lambda t'}\dd t'\;,\quad 
       \cU^{\delta,\lambda}_t = \int \eta(t')\cU^{\delta}_{t-\lambda t'}\dd t'\;.
  \end{align*}
  (For this the curves are assumed to be extended trivially by
  $\mu^\delta_0,\cU^\delta_0$ on $[-\lambda,0]$ and similarly on
  $[T,T+\lambda]$.) By Lemma \ref{lem:ce-OU-invariance} we
  have that $(\mu^\delta,\cU^\delta)\in\CE_T$ and by linearity of
  the collision rate equation also
  $(\mu^{\delta,\lambda},\cU^{\delta,\lambda})\in\CE_T$.

  Finally, let $g$ be a probability density in $\cPE$ such that
  \begin{align}\label{eq:bound-g}
    \abs{\log g(v)}\leq C\ip{v}\;,
  \end{align}
  for some constant $C$ (for instance choose $g(v)$ proportional to
  $e^{-\alpha|v|}$ for suitable $\alpha>0$). Then we set for $\eps>0$,
  $\mu^{\delta,\lambda,\eps}:=(1+\eps)^{-1}(\mu^{\delta,\lambda}+\eps
  g\cL)$,
  and $\cU^{\delta,\lambda,\eps}=(1+\eps)^{-1}\cU^{\delta,\lambda}$ and
  note that
  $(\mu^{\delta,\lambda,\eps},\cU^{\delta,\lambda,\eps})\in\CE_T$. Let
  $f^\delta,U^\delta$ denote the densities of $\mu^\delta,\cU^\delta$
  and similarly with $\lambda$ and $\eps$.  \smallskip

  {\it Step 2: Estimates for the regularized curve.}

  Note that that the time-integrated $p$-moment of $\mu^{\delta}_r$ is bounded as
  \begin{align}\label{eq:moment-mu}
   \int_0^T \cE_{p}(\mu_r^{\delta,\lambda,\eps})\dd r\leq E\;,
  \end{align}
  with $p=2+\max(\gamma,0)$ for all $\delta,\lambda,\eps>0$.
  
  Next, we look at the behavior of the action and dissipation under
  the regularization. From Lemma \ref{lem:AD-conv} we have
    \begin{align}
      \label{eq:act-diss-2}
\cA(\mu^\delta,\cU^\delta)&\leq C\cA(\mu,\cU)\;,\quad D(\mu^\delta)\leq C D(\mu)\;.
    \end{align}
     A similar convexity argument gives that
     \begin{align}\label{eq:action-finite}
       \int_0^T\cA(\mu^{\delta,\lambda}_r,\cU^{\delta,\lambda}_r)\dd r \leq C \int_0^T\cA(\mu_r,\cU_r)\dd r\;.
     \end{align}
     Taking into account Corollary \ref{cor:more-integrability} and
     \eqref{eq:moment-mu} we obtain
      \begin{align}\label{eq:int-U-unif}
        \int_0^T\int \big[\ip{v} + \ip{\vs}\big]\dd|\cU_r^{\delta,\lambda}|\dd r\leq C\;,
      \end{align}
      uniformly in $\delta,\lambda>0$.
  \smallskip
  
 {\it Step 3: Integrated chain rule for regularized curve.}

 Now, we claim that 
  \begin{align}\label{eq:time-deriv}
    \frac{\dd}{\dd r}\cH(\mu^{\delta,\lambda,\eps}_r)
    &= 
    \int_{\R^d} \log f^{\delta,\lambda,\eps}_r \partial_rf^{\delta,\lambda,\eps}_r 
    = 
   \frac14\int_\Omega \dgrad \log f^{\delta,\lambda,\eps}_rU^{\delta,\lambda,\eps}_r\;,
  \end{align}
  where the integral over $\Omega$ is w.r.t.~the measure $\dd
  X\dd\omega$. Indeed, to justify the first identity in \eqref{eq:time-deriv} we use convexity of $r\mapsto r\log r$ and \eqref{eq:bound-g} to estimate
  \begin{align*}
&\frac{1}{h}\big|f^{\delta,\lambda,\eps}_{r+h}\log f^{\delta,\lambda,\eps}_{r+h} - f^{\delta,\lambda,\eps}_r\log f^{\delta,\lambda,\eps}_{r} \big|
\leq C\ip{v}
   \frac{1}{h}\big|f^{\delta,\lambda,\eps}_{r+h} - f^{\delta,\lambda,\eps}_r \big| \\
\leq &C\ip{v} \norm{\eta'}_{\infty}\int_0^T\big(f^{\delta}_{t}+\eps g\big)\dd t\;.
   \end{align*}
   Since $(f_t)_t$ has uniformly bounded time-integrated second moment, by dominated convergence we
   can take the time derivative inside the integral. The second
   identity in \eqref{eq:time-deriv} follows by applying the collision
   rate equation using \eqref{eq:bound-g} and \eqref{eq:int-U-unif}, see Remark
   \ref{rem:generaltest}.

  Integrating \eqref{eq:time-deriv} between $s$ and $t$ we obtain
  \begin{align}\label{eq:chain-reg}
    \cH(f^{\delta,\lambda,\eps}_t) - \cH(f^{\delta,\lambda,\eps}_s)
    = \int_s^t \frac14\int_\Omega \dgrad \log f^{\delta,\lambda,\eps}_rU^{\delta,\lambda,\eps}_r\dd r\;.
  \end{align}
\smallskip

{\it Step 4: Passing to the limit.}

We will now pass to the limit in \eqref{eq:chain-reg} to obtain
\eqref{eq:chainrule} letting $\lambda\to0$, $\eps\to0$ and $\delta\to0$
in this order. Consider first the right hand side.  \smallskip

{\it a) RHS, $\lambda\to0$.}

Using the bound
$|\log f^{\delta,\lambda,\eps}_r|\leq c(\delta,\eps)\ip{v}$ ensured by
\eqref{eq:bound-g} which is uniform in $\lambda$ for fixed
$\delta,\eps$ and the integrability condition \eqref{eq:int-U-unif}
for $U^{\delta,\lambda}$, we can pass to the limit as $\lambda\to0$ and
obtain
  \begin{align}\label{eq:RHS1}
    (1+\eps)^{-1}\int_s^t \frac14\int_\Omega \dgrad \log (f^{\delta}_r+\eps g)U^{\delta}_r\dd r\;.
  \end{align}
\smallskip

{\it b) RHS, $\eps\to0$.}

  We use the estimate
  (dropping time parameter $r$ in the notation):
  \begin{align}\nonumber
    &|\dgrad \log (f^\delta+\eps g) U^\delta|\leq 
\sqrt{|\dgrad \log (f^{\delta}+\eps g)|^2\Lambda(f^{\delta}+\eps g)B}\cdot\sqrt{\frac{|U^\delta|^2}{B\Lambda(f^{\delta}+\eps g)}}\\\nonumber
  &\leq\sqrt{ \Big|\dgrad \log (f^{\delta}+\eps g)\Big|\cdot\Big|(f^{\delta}+\eps g)\big((f^\delta)_* +\eps g_*\big)-\big((f^{\delta})'+\eps g'\big)\big((f^\delta)'_* +\eps g'_*\big)\Big|B}\\\nonumber
&\quad  \cdot\sqrt{\frac{|U^\delta|^2}{B\Lambda(f^\delta)}}\\\nonumber
  &\leq \sqrt{C\big(\ip{v}^2+\ip{\vs}^2\big)\ip{v-\vs}^\gamma \cdot\Big|(f^{\delta}+\eps g)\big((f^\delta)_* +\eps g_*\big)-\big((f^{\delta})'+\eps g'\big)\big((f^\delta)'_* +\eps g'_*\big)\Big|}
    \\\label{eq:2terms}
&\quad \cdot\sqrt{\frac{|U^\delta|^2}{B\Lambda(f^\delta)}}\;.
\end{align}
Here, in the second inequality we have used the definition of
$\Lambda$ and the monotonicity of the logarithmic mean. In the third
inequality we used the bound \eqref{eq:Gaussian-bound} and Assumption
\ref{ass:kernel-bounded}. By the moment assumptions on $f$ and Lemma
\ref{lem:AD-conv}, one readily checks that the right-hand side in   
\eqref{eq:2terms} is integrable on $[0,T]\times \Omega$ and its integral converges to that of the same expression with $\eps=0$. Thus, the extended dominated convergence theorem allows us to pass to the limit
as $\eps \to 0$ in \eqref{eq:RHS1} and obtain
  \begin{align}\label{eq:RHS2}
    \int_s^t \frac14\int,\dgrad \log f^{\delta}_rU^{\delta}_r\dd r\;.
  \end{align}
\smallskip

{\it c) RHS, $\delta\to0$.}

  Note that $\dgrad \log f^{\delta}_rU^{\delta}_r$ converges point-wise
  to $\dgrad \log f_rU_r$ as $\delta\to0$ at every $r$ where the
  densities of $\mu_r,\cU_r$ exist. To pass to the limit in the
  integral over $\Omega$ we use the majorant (dropping the time parameter $r$
  in the notation)
  \begin{align*}
    &|\dgrad \log f^{\delta}U^{\delta}|
    \leq    \frac12|\dgrad\log f^\delta|^2\Lambda(f^\delta)B+\frac12 \frac{|U^\delta|^2}{B\Lambda(f^\delta)} =:\frac12(I_1^\delta+I_2^\delta)\;.
  \end{align*}
  Obviously $I_1^\delta\to I^0_1$ and $I_2^\delta\to I^0_2$ point-wise,
  where $I_1^0$ and $I_2^0$ are the corresponding expressions with
  $f^\delta$ and $U^\delta$ replaced by $f, U$. By Lemma \ref{lem:AD-conv}, we also have
  \[\int I^\delta_1 =D(f^\delta)\to D(f) = \int I^0_1\;,\quad \int I^\delta_2 =\cA(f^\delta,U^\delta)\to \cA(f,U) = \int I^0_2\;, \]
  as $\delta\to0$ for a.e.~$r\in [0,T]$. Thus by the extended
  dominated convergence theorem we can pass to the limit in the space
  integral in \eqref{eq:RHS2}.

  Finally, to pass to the limit in the time integral, we use
  the already established almost everywhere in time convergence of the
  space integral and exhibit a majorant similar as above and using Lemma \ref{lem:AD-conv}:
  \begin{align*}
   &\int \dgrad \log f^{\delta}_rU^{\delta}_r\dd r
    \leq \left(\int I^\delta_1\right)^{\frac12}
        \left(\int I^\delta_2\right)^{\frac12}
     \leq C\sqrt{D(\mu_r)}\sqrt{\cA(\mu_r,\cU_r)}\;.
  \end{align*}
  Recall that the last expression is integrable by assumption.
\smallskip

{\it d) LHS.}

  Let us turn to show convergence of the left hand side of
  \eqref{eq:chain-reg}. Appealing to the bound
  \eqref{eq:bound-g} for $g$ we obtain the estimate
  \begin{align*}
    \abs{\cH(f^{\delta,\lambda,\eps}_t)-\cH(f^{\delta,\eps}_t)}
    \leq
    C\int\ip{v}\big|f^{\delta,\eps}_{t-\lambda t'}-f^{\delta,\eps}_t\big|\eta(t')\dd t'\;,
  \end{align*}
  and we can pass to the limit as $\lambda\to0$ by the continuity of
  $t\mapsto \ip{v}f^{\delta,\eps}_t$ in $L^1$, see Remark
  \ref{rem:generaltest}. The bound \eqref{eq:Gaussian-bound} allows to
  pass to the limit as $\eps\to0$ and we are left with
  $\cH(f^\delta_t)-\cH(f^\delta_s)$. Assume first that $\cH(\mu_s)$ is
  finite. Recall that entropy is decreasing along the
  Ornstein--Uhlenbeck semigroup and lower semicontinuous. As
  $\delta\to0$ we thus have that $\cH(f^\delta_t)$ increases to
  $\cH(\mu_t)$. Thus, $\cH(f^\delta_t)-\cH(f^\delta_s)$ converges to
  $\cH(f_t)-\cH(f_s)$ and $\cH(\mu_t)$ is finite due to the
  boundedness of the right hand side of \eqref{eq:chainrule} in the
  limit. Since by assumption there exists $s$ with
  $\cH(\mu_s)<\infty$, this shows that $\cH(\mu_t)<\infty$ for all
  $t\in[0,T]$ and \eqref{eq:chainrule} is established.
  
  Finally, using the estimate
  \begin{align}\label{eq:bound-DA}
    \frac14\int \dgrad\log f_r \dd\cU_r
   &\leq \sqrt{D(\mu_r)}\sqrt{\cA(\mu_r,\cU_r)}\;,
  \end{align}
  that is obtained just as the one before for $f^\delta_r$ we see that
  $t\mapsto\cH(\mu_t)$ is absolutely continuous and
  \eqref{eq:chainrule-diff} follows.
 \end{proof}

We can now prove the variational characterization of the homogeneous
Boltzmann equation as the gradient flow of the entropy. For
convenience we rephrase the statement here.

By a solution to the homogeneous Boltzmann equation we mean a family
of probability densities $(f_t)_{t\geq 0}$ such that
$f\in C\big([0,\infty);L^1(\R^d)\big)\cap
L^\infty\big([0,\infty);L^1_2(\R^d)\big)$ such that for all
$\phi\in C^\infty_c(\R^d)$ in distribution sense:
\begin{align}\label{eq:weakBE}
   \ddt \int_{\R^d} \phi f_t = -\frac14\int\dgrad
  \phi (\fp\fsp-f\fs)B(v-\vs,\omega)\dd v\dd\vs\dd\omega\;.
\end{align}

\begin{theorem}\label{thm:EDI}
  Let $(f_t)_{t\in[0,T]}$ be a curve of probability
  densities in $\mathcal P_p(\mathbb R^d)$ such that
 \begin{align}\label{eq:f-not}
  \cH(f_0)<\infty\;, \qquad  \int_0^T\cE_{p}(f_t)\dd t<\infty\;. 
 \end{align}
 with $p=2+\max(\gamma,0)$. Then we have that:
 \begin{align*}
   J_T(f):=\cH(f_T)-\cH(f_0)+\frac12\int_0^TD(f_t)\dd t +\frac12\cA_T(f) \geq 0 \;.
 \end{align*}
 Moreover, we have $J_T(f)=0$ if and only if $(f_t)_t$ is solution to
 the homogeneous Boltzmann equation satisfying the integrability
 assumptions \eqref{eq:f-not} and
 \begin{align}\label{eq:dissipation-integrable}
   \int_0^T D(f_t)\dd t < \infty\;.
 \end{align}
\end{theorem}

Assuming finite entropy and energy (and finite fourth moment for
$\gamma>0$) of the initial datum $f_0$, Theorem \ref{thm:boltzmann}
gives existence and uniqueness of a solution $(f_t)_t$ to the
homogeneous Boltzmann equation. It satisfies \eqref{eq:f-not} and
\eqref{eq:energy-identity}, in particular,
\eqref{eq:dissipation-integrable} holds. Thus, there is actually only
one curve such that $J_T(f)=0$, namely the unique solution to the
Boltzmann equation.

\begin{proof}[Proof of Theorem \eqref{thm:EDI}]
  Let $(f_t)_{t\in[0,T]}$ be a curve satisfying \eqref{eq:f-not}. To
  show $J_T(f)\geq0$ we can assume that $\cA_T(f)<\infty$ and
  $\int_0^TD(f_t)\dd t<\infty$, since otherwise $J_T(f)=+\infty$. Let
  $(\cU_t)_t$ be optimal collision rates given by Proposition
  \ref{prop:opt-collision-rate}. But then $J_T(f)\geq0$ follows
  immediately from Proposition \ref{prop:chainrule} and the estimate
  \eqref{eq:bound-DA}.

  We now show that any solution $(f_t)$ to the Boltzmann equation satisfying
  \eqref{eq:f-not} and \eqref{eq:dissipation-integrable} satisfies
  $J_T(f)=0$. Setting $\mu_t=f_t\cL$ and
 \begin{align*}
   \cU_t=-\dgrad\log f_t\Lambda(f_t)\cB = -\big[(\fp)_t(\fsp)_t-f_t(\fs)_t\big]\cB\;,
 \end{align*}
 we see by \eqref{eq:weakBE} that $(\mu,\cU)$ belongs to $\CE$. Moreover, we have that
 $\cA(\mu_t,\cU_t)=D(f_t)$ and thus by
 \eqref{eq:dissipation-integrable} we can apply the chain rule
 \eqref{eq:chainrule} to obtain
 \begin{align*}
   \cH(f_T) - \cH(f_0)= -\int_0^T D(f_t)\dd t = -\frac12 \int_0^T D(f_t) \dd t-\frac12 \cA_T(\mu)\;,
 \end{align*}
 i.e.~$J_T(f)=0$.

 Conversely, let us show that any curve $(f_t)_t$ with $J_T(f)=0$ is a
 solution to the Boltzmann equation satisfying \eqref{eq:dissipation-integrable}. From
 \eqref{eq:f-not} we obtain that $\cH(\mu_t)<\infty$ for all $t$ and
 that $\cA_T(f)<\infty$ and \eqref{eq:dissipation-integrable}
 holds. By Proposition \ref{prop:opt-collision-rate} there exists a
 family $\cU_t$ with $(\mu,\cU)\in\CE_T$ such that
 $\int_0^T\cA(f_t,\cU_t)\dd t=\cA_T(f)$, in particular $t\mapsto f_t$
 is continuous in $L^1$, see Remark \ref{rem:refined}. By Lemma \ref{lem:densities} the measure
 $\cU_t$ has a density $U_t\Lambda(f_t)B$. From the chain rule
 \eqref{eq:chainrule} and the Cauchy--Schwartz and Young inequalities
 we infer that
 \begin{align*}
   \cH(f_T)-\cH(f_0) &= \int_0^T\frac14\int\dgrad\log f_r U_r \Lambda(f_r)B\dd r\\
                        &\geq -\int_0^T\left[\sqrt{\frac14\int |\dgrad\log f_r|^2\Lambda(f_r)B}\sqrt{\frac14\int |U_r|^2\Lambda(f_r)B}\right]\dd r\\
                        &\geq -\frac12 \int_0^T\left[\frac14\int|\dgrad\log f_r|^2\Lambda(f_r)B + \frac14\int|U_r|^2\Lambda(f_r)B\right] \dd r\\
                        &= -\frac12\int_0^T D(f_r)\dd r- \frac12\cA_T(f)\;.
 \end{align*}
 Since $J_T(f)=0$, we see that the two inequalities have to be
 identities. This implies that
 $\int_0^T\int|U_r+\dgrad\log f_r|^2\Lambda(f_r)B\dd t=0$, hence
 $U_r=-\dgrad\log f_r$ for a.e.~$r$ and a.e.~$(v,\vs,\omega)$ with
 $\Lambda(f_r)(v,\vs,\omega)>0$. Thus, the collision rate equation for
 $(\mu,\cU)$ turns into the distributional formulation of the Boltzmann
 equation.
\end{proof}

The results obtained in this section can be recast in the framework of
gradient flows in metric spaces. The action functional gives rise to a
distance $\cW_B$ on $\cP_{p,E}(\R^d)$ and the Boltzmann equation is
characterized as the gradient flow of $\cH$ (i.e.~a curve of maximal
slope) in the metric space $(\cP_{p,E}(\R^d),\cW_B)$. We refer to
the appendix for a discussion of this point of view, in particular to
Corollary \ref{cor:sug-dissipation}.

\subsection*{Generalized gradient structures}

We will now briefly discuss possible generalizations of the variational characterization of the Boltzmann equation above using generalized gradient structures. Such structures arise naturally from the large deviation behavior of an underlying microscopic stochastic system, c.f.~the discussion in the introduction. We refer the reader e.g.~to \cite{MPR14} and the references therein. Here we aim to indicate how the characterization in Theorem \ref{thm:EDI} can be generalized to non-quadratic gradient structures. A very detailed discussion of generalized gradient structures associated in the case of jump processes has recently been performed in \cite{PRST20}. We will mainly follow their terminology and constructions and adapt them to the present case of the Boltzmann equation. In comparison, we will impose more restrictive conditions on the gradient structures we consider in order to simplify the presentation while still encompassing the main examples we are interested in, see Example \ref{ex:quad-gen}.

Let us fix a \emph{dual dissipation density} $\Psi^*$ and a \emph{flux density map} $\theta$ as follows. 

\begin{assumption}\label{ass:gen-grad}
 We assume that
\begin{enumerate}
\item the function $\Psi^*:\R\to[0,\infty)$ is convex, differentiable, superlinear and even with $\Psi^*(0)=0$; 
\item the function $\theta:[0,\infty)\times[0,\infty)\to[0,\infty)$ is assumed to be continuous, concave, and not identically $0$ and satisfies
\begin{itemize}
\item symmetry: $\theta(r,s)=\theta(s,r)$ for all $s,r\in [0,\infty)$;
\item positive $1$-homogeneity: $\theta(\lambda r,\lambda s) = \lambda\theta(r,s)$ for all $r,s\in[0,\infty)$ and $\lambda\geq 0$;
\item behavior at $0$: $\theta(0,t)=0$ for all $t\in[0,\infty)$;
\end{itemize}
\item in addition there holds
\begin{itemize}
\item compatibility: $(\Psi^{*})'(\log s-\log t) \theta(s,t) = s-t$ for all $s,t>0$;
\item there exists a convex lower semi-continuous function $G_{\Psi^{*}}:[0,\infty)\times[0,\infty)\to[0,\infty)$ such that 
\[\frac14\Psi^{*}(\log t-\log s)\theta(s,t) =
  G_{\Psi^{*}}(s,t)\quad\forall s,t>0\;,\]
and such that $G_{\Psi^*}(s,t)=0$ if and only if $s=t$. 
\end{itemize}
\end{enumerate}
\end{assumption}

Let $\Psi:\R\to\R$ be the convex conjugate of $\Psi^{*}$ and note that it is strictly convex, strictly increasing, superlinear and even with $\Psi(0)=0$.

Given $\mu\in \cP(\R^{d})$, we define the measure $\nu_{\mu}\in\cM_{+}(\Omega)$ via 
\begin{equation}\label{eq:edge-measure}
\nu_{\mu}:=\theta\big(\frac{\dd\mu^{1}}{\dd\sigma},\frac{\dd\mu^{2}}{\dd\sigma}\big)\sigma\;,
\end{equation}
where $\mu^{1},\mu^{2}$ are given by \eqref{eq:defmu12} and $\sigma$ is any measure such that $\mu^{1},\mu^{2},\ll \sigma$. Due to the $1$-homogeneity of $\theta$ the definition is independent of $\sigma$. Note that if $\mu$ is absolutely continuous w.r.t.~Lebesgue measure with density $f$, then we have $\dd \nu_{\mu} = \theta(f \fs,\fp \fsp)B \dd v \dd \vs \dd \omega$.

We can now define the primal and dual dissipation potentials.

\begin{definition}\label{def:diss-pot} Given measures $\mu\in \cP(\R^{d})$, $\cU\in \cM(\Omega)$ we define
\begin{align}\label{eq:def-primal-diss}
\mathcal R(\mu,\cU):=\frac14 \int_{\Omega}\Psi\Big(\frac{\dd \cU}{\dd \nu_{\mu}}\Big)\dd\nu_{\mu}\;,
\end{align}
provided $\cU\ll \nu_{\mu}$ and $\mathcal R(\mu,\cU)=+\infty$ else. 
Given moreover a measurable function $\xi:\Omega\to\R$ we define
\begin{align}\label{eq:def-dual-diss}
 \mathcal R^{*}(\mu,\xi):=\frac14\int_{\Omega}\Psi^{*}\big(\xi\big)\dd\nu_{\mu}\;.
\end{align}
Finally, we define
\begin{equation}\label{eq:alt-dissipation}
D_{\Psi^{*}}(\mu):= \int_{\Omega} G_{\Psi^{*}}(f\fs,\fp\fsp)B \dd v\dd\vs\dd\omega \;,
\end{equation}
provided $\mu$ is absolutely continuous with density $f$ and set $D_{\Psi^{*}}(\mu)=+\infty$ otherwise.
\end{definition} 

The functional $D_{\Psi^{*}}$ takes over the role of the entropy dissipation is formally given by
\begin{equation*}
D_{\Psi^{*}}(\mu)= \mathcal R^{*}(\mu, -\bar\nabla \log f)\;,
\end{equation*}
provided $\mu$ has density $f$.

Note that the primal dissipation potential can be rewritten as the integral functional $\mathcal R(\mu,\cU)=\mathcal F_{\beta}(\mu^{1},\mu^{2},\cU)$ using the notation \eqref{eq:int-funct}, with the function $\beta$ defined by
\begin{equation*}
\beta(s,t,u):=\begin{cases}
\frac14\Psi\big(\frac{u}{\theta(s,t)}\big)\theta(s,t)\;, & \theta(s,t)\neq 0\;,\\
0\;, & \theta(s,t) = 0 \text{ and } u=0\;,\\
+\infty\;, \theta(s,t)=0 \text{ and } u\neq 0\;.
 \end{cases}
\end{equation*}
Since the $\beta:[0,\infty)\times[0,\infty)\times \R\to[0,\infty]$ is again convex and lower semi-continuous, in analogy to Lemma \ref{lem:lscaction} shows that $\mathcal R$ is convex and lower semicontinuous w.r.t. weak convergence of $\mu$ and weak$^{*}$ convergence of $\cU$. Similarly the assumptions on $G_{\Psi^{*}}$ guarantee that $D_{\Psi^{*}}$ is convex and lower semi-continuous w.r.t.~weak convergence.

We can now formulate the variational characterization of the Boltzmann equation.

\begin{theorem}\label{thm:ED-balance}
  Let $(\mu,\cU)\in\CE_T$ with $(\mu_t)_t\subset \mathcal P_p(\mathbb R^d)$ such that
 \begin{align}\label{eq:alt-f-not}
  \cH(\mu_0)<\infty\;, \qquad  \int_0^T\cE_{p}(\mu_t)\dd t<\infty\;. 
 \end{align}
 with $p=2+\max(\gamma,0)$. Then we have that:
 \begin{align}\label{eq:alt-chain-rule-est}
   \mathcal L_T(\mu,\cU):=\cH(\mu_T)-\cH(\mu_0)+\int_0^T D_{\Psi^{*}}(\mu_t) + \mathcal R(\mu_{t},\cU_{t})\dd t \geq 0 \;.
 \end{align}
 Moreover, we have $\mathcal L_T(\mu,\cU)=0$ if and only if $\mu_{t}$ has density $f_{t}$ with $(f_{t})_{t}$ a solution to
 the homogeneous Boltzmann equation.
\end{theorem}

\begin{proof}
We can follow with small modifications the proof of Proposition \ref{prop:chainrule} and Theorem \ref{thm:EDI}. Let us highlight the main steps.

First note that by the convex duality of $\Psi$ and $\Psi^{*}$, for $s,t>0$ we have the estimate
\begin{align}\nonumber
\frac14\left|(\log t-\log s)w\right| &= \frac14\left|(\log s-\log t)\frac{w}{\theta(s,t)}\right|\theta(s,t)\\\nonumber
 &\leq \frac14\Psi\big(\frac{w}{\theta(s,t)}\big)\theta(s,t) +\frac14 \Psi^{*}\big(\log s-\log t\big)\theta(s,t)\\\label{eq:dual-est}
&= \frac14\Psi\big(\frac{w}{\theta(s,t)}\big)\theta(s,t) + G_{\Psi^{*}}(s,t)\;.
\end{align}
Moreover, we have equality
\begin{align}
\label{eq:dual-eq}
\frac14(\log t-\log s)w &= -\frac14\Psi\big(\frac{w}{\theta(s,t)}\big)\theta(s,t) - G_{\Psi^{*}}(s,t)
\end{align}
 if and only if $w=(\Psi^{*})'\big(\log s-\log t\big)\theta(s,t)$ and hence by Assumption \ref{ass:gen-grad} if and only if $w=s-t$.

To prove \eqref{eq:alt-chain-rule-est} we can assume that $\int_{0}^{T}D_{\Psi^{*}}(\mu_t) + \mathcal R(\mu_{t},\cU_{t}) \dd t<\infty$ since otherwise the estimate holds trivially. Then arguing as in Lemma \ref{lem:densities} we have for a.e.~$t$ that $\mu_{t}$ and $\cU_{t}$ have densities $f_{t}$ and $U_{t}=W_{t}\theta(f_{t})B$, where $\theta(f):=\theta(f\fs,\fp,\fsp)$. In particular,  the set of $v,\vs,\omega$ where $\theta(f_{t})=0$ is negligible for $\cU_{t}$.

The first step is then to establish the chain rule
\begin{equation}\label{eq:chain-rule-alt}
 \cH(\mu_t)-\cH(\mu_s) = \int_s^t\frac14\int\dgrad\log f_r \dd\cU_r\dd r\quad\forall 0\leq s\leq t\leq T\;.
\end{equation}
According to the previous comment, the integral can be restricted to the set $\{\theta(f_{r})>0\}$ and is thus well-defined. One argues as in the proof of Proposition \ref{prop:chainrule} by regularization and replaces $\frac12\cA(\mu,\cU)$ and $\frac12 D(\mu)$ with $\mathcal R(\mu,\cU)$ and $D_{\Psi^{*}}(\mu)$. The estimate \eqref{eq:dual-est} yields the necessary majorants. The essential properties of $\cA$ and $D$ used in the proof were convexity and lower semi-continuity which still hold under our assumptions for $\mathcal R$ and $D_{\Psi^{*}}$. 

From \eqref{eq:chain-rule-alt} we obtain \eqref{eq:alt-chain-rule-est} by estimating as in \eqref{eq:dual-est} on the set $\{\theta(f)>0\}$:
\begin{align*}
\frac14\bar\nabla \log f \cdot U = -\frac14 \theta(f)(-\bar\nabla f)B\frac{U}{\theta(f)B}
\geq - \frac14 \Psi\big(\frac{U}{\theta(f)B}\big)\theta(f)B -\frac14\Psi^{*}\big(-\bar\nabla f\big)\theta(f)B\;.
\end{align*}
Now, assume that equality is attained in
\eqref{eq:alt-chain-rule-est}.  Then for a.e.~$t$ and a.e.~on the set
$\{\theta(f_t)>0\}$ we must have 
 \[U = (\Psi^{*})'(-\bar\nabla f)\theta(f)B=
   \big[f\fs-\fp\fsp\big]B\;.\] 
On the set where $\theta(f_t)=0$ and hence $U_t=0$ we must have
$G_{\Psi^*}(f\fs,\fp\fsp)=0$. But the assumptions on $G_{\Psi^*}$ and
$\theta$ then yield that on $\{\theta(f)=0\}$ we have $f\fs=\fp\fsp=0$
and thus again $U=f\fs-\fp\fsp$, i.e.~$(f_t)$ is a solution to the
Boltzmann equation. Conversely, a solution to the
Boltzmann equation leads to equality in \eqref{eq:alt-chain-rule-est}.
\end{proof}

\begin{example}\label{ex:quad-gen}
Finally let us highlight two examples of generalized gradient structures compatible with our Assumption \ref{ass:gen-grad}.
\begin{itemize}
 \item \emph{Quadratic gradient structure:} Choosing 
 \[\theta=\Lambda\;,\qquad  \Psi^{*}(r)=\Psi(r)=\frac12 r^{2}\;,\]
 with $\Lambda$ the logarithmic mean defined in \eqref{eq:log-mean}, we recover the framework considered in the previous section and the gradient flow characterization of Theorem \ref{thm:EDI}. Namely, $D_{\Psi^{*}}(\mu)=\frac12D(\mu)$ and $\mathcal R(\mu,\cU)=\frac12\cA(\mu,\cU)$ yields the action functional defined in Definition \ref{def:action}.
 \item \emph{$\cosh$ structure:} Let us set
 \[\theta(s,t) = \sqrt{st}\;,\qquad \Psi^{*}(\xi)=4\big(\cosh(\xi/2)-1\big)\;.\]
 Then we obtain 
 \[\Psi(s)=2s\log\Big(\frac{s+\sqrt{s^{2}+4}}{2}\Big) -\sqrt{s^{2}+4}+4\;,\]
 as well as
 \[G_{\Psi^{*}}(s,t) = \frac14\Psi^{*}\big(\log t-\log s\big)\theta(s,t)=\frac12\big(\sqrt{s}-\sqrt{t}\big)^{2}.\]
 \end{itemize}
 \end{example}
 Let us mention that generalized gradient structures involving $\cosh$ such as the latter example arise naturally in the context of large deviations for jump processes. Namely, the associated functional $\mathcal L_{T}$ from \eqref{eq:alt-chain-rule-est}  is the path-level large deviation rate functional for the empirical measure of $N$ independent copies of the process in the limit $N\to\infty$, see for instance \cite{MPR14, PRST20}.  In the present setting, the second structure in the above examples can formally be related with the large deviations of the Kac process considered in the next section. However, to the best of our knowledge no full large deviation principle for this classical Kac system with conservation of momentum and energy has been established yet. In \cite{BBBO21} a large deviations principle for a Kac type system with only conservation of momentum is established which gives rise to a $\cosh$-type generalized gradient structure for the corresponding limiting Boltzmann type equation.  A similar structure has also been used in \cite{BBB17} to give a variational characterization of linear spatially inhomogeneous Boltzmann equations.

\section{Consistency with Kac's random walk}
\label{sec:kac}

In this section we give a new proof of the convergence of Kac's random
walk to the solution of the spatially homogeneous Boltzmann equation,
see Theorem \ref{thm:kac-boltz-intro}, exploiting that both evolutions
have a gradient flow structure. We recall from
Section \ref{sec:kac-intro} that Kac's random walk is the continuous
time Markov chain on
\begin{align*}
  \cX_N := \left\{(v_1,\dots,v_N)\in\R^{dN} ~|~\sum_{i=1}^Nv_i=0\;,~\sum_{i=1}^N|v_i|^2=Nd  \right\}\;,
\end{align*}
with generator
\begin{align}\label{eq:Kac-generator}
  A f(\bv) = \frac{1}{N}\int_{S^{d-1}}\sum_{i<j}\left[f(R^\omega_{ij}\bv)-f(\bv)\right]B(v_i-v_j,\omega)\dd \omega\;,
\end{align}
where $R^\omega_{ij}\bv = (v_1,\dots,v_i',\dots,v_j',\dots,v_N)$, with
$v_i' = v_i - \ip{v_i-v_j,\omega}\omega$ and
$v_j' = v_j + \ip{v_i-v_j,\omega}\omega$. Let us denote by $\pi_N$ the normalized Hausdorff measure on
$\cX_N$ and note that the Markov chain is
reversible with respect to $\pi_N$. Denoting by $\mu^N_t$ the law of the chain starting in
$\mu_0^N$. Then its density $f^N_t$ w.r.t.~$\pi_N$ satisfies Kac's
master equation
\begin{align}\label{eq:kac-master}
\partial_t f^N_t=A f^N_t\;.
\end{align}
We recall the following result. For $\bv\in \R^{Nd}$ and $p\geq1$ we set
\begin{align*}
  \cE^N_p(\bv):=\frac1N\sum_{i=1}^N|v_i|^p\;.
\end{align*}
\begin{lemma}[Propagation of moments for Kac's random walk,~{\cite[Lem.~5.3]{MM13}}]\label{lem:kac-moments}
  Let $\mu_0^N$ an initial condition with $\ip{\cE^N_p,\mu^N_0}=\int \cE^N_p\dd\mu^N_0<\infty$. Then the law $(\mu^N_t)_{t\geq0}$ of Kac's random walk satisfies 
  \begin{align*}
    \sup_{t\geq0}\ip{\cE^N_p,\mu^N_t} \leq \max\{C_p,\ip{\cE^N_p,\mu^N_0}\}\;,
  \end{align*}
for some constant $C_p$ depending only on $p$.
\end{lemma}

We will first detail the gradient flow structure of the master
equation.

\subsection{Gradient flow structure}
\label{sec:kac-gf}

Kac's random walk possesses the structure of a gradient flow in
$\cP(\cX_N)$ of the relative entropy $\cH(\cdot|\pi_N)$ with respect
to a suitable geometry on $\cP(\cX_N)$ as we shall now describe. For
general Markov chains on finite state spaces a gradient flow structure
has been discovered in \cite{Ma11,Mie11}. Here we briefly show how to
extend this result to the present case of the continuous state space
$\cX_N$. The construction is similar as in Section \ref{sec:CRE-action},
see also \cite{Erb14}. Let us stress however that for the purpose of
showing consistency with the Boltzmann equation it will only be
important to know that the solution $(f_t)_t$ to \eqref{eq:kac-master}
satisfies the energy identity $J^N_T(f)= 0$, see \eqref{eq:kac-edi}
below.

We introduce a jump kernel on $\cX_N$ by setting
\begin{align*}
  J(\bv,\dd\bu)~=~ \frac{1}{2N}\int_{S^{d-1}}\sum_{i,j=1}^N\delta_{R^\omega_{ij}\bv}(\dd\bu)B(v_i-v_j,\omega)\dd\omega\;.
\end{align*}
Given a probability measure $\mu\in\cP(\cX_N)$ we define
$\mu^1,\mu^2\in\cM(\cX_N\times\cX_N)$ via
\begin{align}\label{eq:mu-aux}
  \dd\mu^1(\bv,\bu) = J(\bv,\dd\bu)\dd\mu(\bv)\;,\quad
  \dd\mu^2(\bv,\bu) = J(\bu,\dd\bv)\dd\mu(\bu)\;.
\end{align}
For a pair $(\mu,\cV)$ with $\mu\in\cP(\cX_N)$ and $\cV\in
\cM(\cX_N\times\cX_N)$ we define the action
\begin{align*}
  \cA^N(\mu,\cV) :=
  2\cF_\alpha(\mu^1,\mu^2,\cV)\;,
\end{align*}
where $\cF_\alpha$ is defined in \eqref{eq:F-gen}. We define a distance on $\cP(\cX_N)$ by
setting
\begin{align*}
  \cW_N(\mu_0,\mu_1)^2 := \inf_{\mu,\cV}\int_0^1\cA^N(\mu_t,\cV_t)\dd t\;, 
\end{align*}
where the infimum is taken over all curves $(\mu_t)_{t\in[0,1]}$
connecting $\mu_0$ to $\mu_1$ and all $(\cV_t)_{t\in[0,1]}$ subject to
the continuity equation
\begin{align*}
   \ddt \int_{\cX_N}\phi\dd\mu_t - \frac12\int_{\cX_N^2}[\phi(\bu)-\phi(\bv)]\dd\cV_t(\bv,\bu)= 0\;,\quad\forall \phi\in C_b(\cX_N)\;.
\end{align*}
It follows from the results in \cite[Thm.~4.4, Prop.~4.3]{Erb14}, by
considering $J$ as a jump kernel on the ambient space $\R^{dN}$, that
$\cW_N$ defines a distance and that the infimum in the definition is
attained by an optimal pair $(\mu,\cV)$. For a curve $(\mu_t)_{t\in[0,T]}$ in
$\cP(\cX_N)$ we define its action by
\begin{align*}
  \cA^N_{T}(\mu):=\inf\left\{\int_0^T\cA_N(\mu_t,\cV_t)\dd t\right\}\;,
\end{align*}
where the infimum is taken over all $(\cV_t)_t$ such that $(\mu,\cV)$
satisfy the continuity equation. There exists an optimal $\cV$
attaining the infimum, see \cite[Prop.~4.3]{Erb14}. In fact, for
a.e.~$t$, $\cA^N(\mu_t,\cV_t)$ equals the metric derivative of the
curve w.r.t.~$\cW_N$. We define the \emph{entropy dissipation} of
$\mu\in\cP(\cX_N)$ by
\begin{align*}
  D^N(\mu)=\frac{1}{4N}\int_{\cX_N}\int_{S^{d-1}}\sum_{i,j}&\Big[f(R^\omega_{ij}\bv)-f(\bv)\Big]\\
  \times&\Big[\log
  f(R^\omega_{ij}\bv)-\log f(\bv)\Big]B(v_i-v_j,\omega)\dd\omega\dd\pi_N(\bv)\;,
\end{align*}
provided $\mu=f\pi_N$ and we set $D^N(\mu)=+\infty$ if $\mu$ is not
absolutely continuous. Note that along any solution $f_t$ to the
master equation \eqref{eq:kac-master} we have
\begin{align}\label{eq:dissi-kac}
  \ddt\cH(f_t|\pi_N)=-D^N(f_t)\;.
\end{align}

\begin{proposition}\label{prop:kac-gf}
  For any curve $(\mu_t)_{t\in[0,T]}$ in $\cP(\cX_N)$ with
  $\cH(\mu_0|\pi_N)<\infty$ we have
  \begin{align}\label{eq:kac-edi}
  J_T^N(\mu)=\cH(\mu_T|\pi_N)-\cH_N(\mu_0|\pi_N)+\frac12\int_0^TD^N(\mu_t)\dd t +\frac12\cA^N_{T}(\mu) \geq 0\;.
  \end{align}
  Moreover, $J_T^N(\mu)=0$ holds if and only if $\mu_t=f_t\pi_N$ where $f_t$ solves \eqref{eq:kac-master}.
\end{proposition}

\begin{proof}
  We will focus on showing that any solution $(\mu_t)_t$ to the master
  equation \eqref{eq:kac-master} satisfies $J^N_T(\mu)=0$ since this
  will be used in the sequel. The other statements can be obtained by
  following a similar line of reasoning as in Section
  \ref{sec:gradflow}, namely establishing a chain rule for the entropy
  analogous to Proposition \ref{prop:chainrule} via a regularization
  argument (in fact the situation is much simpler due to linearity of
  the master equation).
 
  Let $\mu_t=f_t\pi^N$ be a solution to the master equation
  \eqref{eq:kac-master}. Then the couple $(\mu_t,\cV_t)$ solves
  the continuity equation if we choose
  $$\dd\cV_t(\bv,\bu)=\Psi_t(\bv,\bu)\Lambda(f_t(\bv),f_t(\bu))J(\bv,\dd\bu)\pi^N(\dd
  v)$$
  with $\Psi_t(\bv,\bu)=\log f_t(\bu)-\log f_t(\bv)$. Note moreover that
  $\cA(\mu_t,\cV_t)=D_N(\mu_t)$. Thus, integrating \eqref{eq:dissi-kac}
  yields $J_T(\mu)=0$.
\end{proof}

\subsection{Convergence to the Boltzmann equation}
\label{sec:convegence}

In this section we will give a new proof that the distribution of the
empirical measure of $N$ particles evolving by Kac's random walk
converges to the solution of the homogeneous Boltzmann equation as
$N\to\infty$. For convenience let us recall the setup and the
convergence statement.

Consider the map assigning to a configuration in $\cX_N$ its empirical
measure
\begin{align*}
  L_N: \cX_N\to \cP(\R^d)\;, \quad \bv \mapsto \frac{1}{N}\sum_{i=1}^N\delta_{v_i}\;.
\end{align*}
Let us set
\begin{align*}
  \cP_*(\R^d):=\big\{\mu\in\cP(\R^d): \cM(\mu)=0,\cE(\mu)=d\big\}\;,
\end{align*}
the set of probability measures with zero momentum and energy $d$,
recall \eqref{eq:momentum-energy}. Note that for any $\bv\in\cX_N$ we
have $L_N\bv\in \cP_*(\R^d)$. Let us denote by $M=M^{0,d}$ the
standard Maxwellian distribution and by $\cH(\mu|M)$ the relative
entropy, see \eqref{eq:ent-gauss}. We consider $\cP_*(\R^d)$ as a
subset of $\cP(\R^d)$ equipped with the topology of weak convergence.

\begin{theorem}\label{thm:kac-boltz}
  For each $N$ let $(\mu^N_t)_{t\geq0}$ be the law of Kac's random
  walk starting from $\mu^N_0$ and let $c^N_t:=(L_N)_\#\mu^N_t$ be the
  law of the empirical measures. Assume that $\mu^N_0$ is
  well-prepared for some $\nu_0=f_0\cL\in\cP_*(\R^d)$ (if $\gamma>0$ assume further $\cE_4(\nu_0)<\infty$) with
  $\cH(\nu_0|M)<\infty$ in the sense that in the limit $N\to\infty$
  \begin{align*}
    c^N_0\rightharpoonup \delta_{\nu_0}\;,\quad
    \frac1N\cH(\mu^N_0|\pi_N) \rightarrow \cH(\nu_0|M)\;.
  \end{align*}
 Assume further that for some $p>2+\max(\gamma,0)$  
  \begin{align*}
    \sup_N \ip{\cE^N_p,\mu^N_0} <\infty\;.
  \end{align*}
  Then, for all $t>0$, as $N\to\infty$ we have
   \begin{align}\label{eq:prop-chaos}
    c^N_t\rightharpoonup \delta_{\nu_t}\;,\quad \frac1N\cH(\mu^N_t|\pi_N) \rightarrow \cH(\nu_t|M)\;,
  \end{align}
 where $\nu_t=f_t\cL$ and $f_t$ is the unique
  solution to the spatially homogeneous Boltzmann equation with
  initial datum $f_0$.
\end{theorem}

The strategy of the proof will be to pass to the limit in the
variational formulation of the master equation and obtain the
variational formulation of the Boltzmann equation. The key ingredient
to this will be to establish $\liminf$ estimates relating the entropy,
dissipation and action for the Kac walk and the Boltzmann
equation. Although the proofs of the latter might seem long, the core
argument is rather simple and boils down to the lower semicontinuity
of integral functionals stated in Lemma \ref{lem:Fprops}. A
non-trivial additional ingredient that we develop is a probabilistic
representation result that allows to view certain curves in
$\cP(\cP_*(\R^d))$ as superposition of curves in $\cP_*(\R^d)$, see
Proposition \ref{prop:superposition}.

Let us now first give the proof of convergence theorem. Afterwards we
will develop the necessary ingredients.

\begin{proof}[Proof of Theorem \ref{thm:kac-boltz}]
  By Proposition \ref{prop:kac-gf} we have that $(\mu^N_t)_{t\geq0}$
  satisfies
\begin{align}\label{eq:kacboltz1}
 \cH(\mu^N_T|\pi_N)-\cH(\mu^N_0|\pi_N)+\frac12\int_0^TD^N(\mu^N_t)\dd t+\frac12\cA^N_T(\mu)= 0\;.
\end{align}
Together with the convergence of $\cH(\mu^N_0|\pi_N)/N$ this implies in particular 
\begin{align*}
  \sup\limits_N\frac1N\cA^N_T(\mu^N) < \infty\;.
\end{align*}
The compactness result Lemma \ref{lem:curve-conv} then yields that up
to a subsequence we have that $c^N_t\rightharpoonup c_t$ weakly for
all $t$ and a continuous curve $(c_t)_{t\geq0}$ in $\cP(\cP(\R^d))$ with $c_t$
concentrated on $\cP_*(\R^d)\cap \cPpE$ for all $t$ and suitable $E>0$. A priori, $c_t$ is not a
Dirac measure. However, by the superposition principle Proposition
\ref{prop:superposition} the curve $(c_t)_{t\in[0,T]}$ can be
represented as $c_t=(e_t)_\#\Theta$ for a probability measure $\Theta$
on $C\big([0,T],\cP(\R^d))$. Thanks to the $\liminf$-inequalities for
the entropy, dissipation and action given by \eqref{eq:entropy-conv},
\eqref{eq:dissipation-conv} and \eqref{eq:action-conv}, dividing by
$N$ in \eqref{eq:kacboltz1} and passing to the limes inferior we
obtain
\begin{align}\label{eq:kacboltz2}
  \int\left[\cH(\eta_T)-\cH(\eta_0)+\frac12\int_0^TD(\eta_t)\dd t+\frac12\cA_T(\eta)\right] \dd\Theta(\eta)\leq 0\;,
\end{align}
using also that $\cH(\eta|M)=\cH(\eta)+\cH(M)$ for
$\eta\in\cP_*(\R^d)$ and that $\eta_0,\eta_T\in\cP_*(\R^d)$ for
$\Theta$-a.e.~$\eta$. By Theorem \ref{thm:EDI} the integrand is
non-negative. Thus we have in fact equality in \eqref{eq:kacboltz2}
and we infer that $\Theta$ is concentrated on gradient flow curves
$(\eta_t)_t$, i.e.~satisfying $J_T(\eta)=0$. Since
$\Theta$-a.s.~$\eta_0=\nu_0$ and the unique gradient flow curve
starting from $\nu_0$ is given by $\nu_t=f_t\cL$ with $f_t$ the
solution to the Boltzmann equation with initial datum $f_0$. Thus, we
infer that $c_t=(e_t)_\#\Theta=\delta_{\nu_t}$ for all $t$ and that
the convergence of $c^N_t$ to $\delta_{\nu_t}$ holds for the full
sequence. Finally, we prove \eqref{eq:prop-chaos}. From the previous
discussion we retain that
\begin{align*}
  0 &\geq \liminf\limits_N\frac1N J^N_T(\mu^N) -J_T(\nu)= \liminf_N\frac1N \cH(\mu^N_T|\pi_N)- \cH(\nu_T|M)\\
&+\frac12 \left[\liminf_N\frac1N \int_0^TD_N(\mu^N_t)\dd t +\cA^N_T(\mu^N) -\int_0^TD(\nu_t)\dd t+\cA_T(\nu)\right] \geq 0\;.
\end{align*}
Using again \eqref{eq:entropy-conv}, \eqref{eq:action-conv},
\eqref{eq:dissipation-conv}, we infer that we have equality
\begin{align*}
  \liminf\limits_N\frac1N\cH(\mu^N_t|\pi_N)=\cH(\nu_t|M)\;.
\end{align*}
Since by the same argument this must hold for any subsequence, we
conclude the convergence \eqref{eq:prop-chaos} for the full sequence.
\end{proof}  

We now develop the ingredients to the previous proof. We will first
show that any sequence of curves in $\cP(\cX_N)$ with uniformly
bounded action after passing to the empirical measure admits a limit
curve in $\cP(\cP(\R^d))$. Then we will give a representation of this
curve as a superposition of curves in $\cP(\R^d)$ and establish
$\liminf$ inequalities for the action and dissipation of the limit
curve. Finally, we prove the $\liminf$ inequality for the entropy.

\subsubsection{Convergence to a limit curve}
\label{sec:curve-conv} 

\begin{lemma}\label{lem:curve-conv}
  Let $(\mu^N_t)_{t\in[0,T]}$ be a sequence of curves in $\cP(\cX_N)$
  such that
  \begin{align}\label{eq:action-bounded1}
    \sup\limits_N\frac1N\cA^N_T(\mu^N) < \infty\;,
  \end{align}
  and for some $p'>2+\max(\gamma,0)$
  \begin{align}\label{eq:moment-bounded}
    \sup_N\sup_{t\in[0,T]}\ip{\cE^N_p,\mu^N_t}<\infty\;.
  \end{align}
  Put $c^N_t=(L_N)_\#\mu^N_t$. Then there exists a continuous curve
  $(c_t)_{t\in[0,T]}$ in $\cP(\cP(\R^d))$ such that up to a
  subsequence we have that $c^N_t\rightharpoonup c_t$ weakly and $c_t$
  is concentrated on $\cP_*(\R^d)$ for all $t\in[0,T]$.
\end{lemma}

\begin{proof}
  We consider the set $\cPE$ of probability measures with energy less
  than $E$, with $E=d$, recall \eqref{eq:P2E}. Recall that $\cPE$ is
  compact w.r.t.~weak convergence, hence also $\cP(\cPE)$ is
  compact. On $\cPE$, weak convergence is equivalent to convergence of the first
  moment, or convergence in the $L^1$-Wasserstein
  distance $W_1$. Let us denote by $\widetilde W_1$ the
  $L^1$-Wasserstein distance on $\cP(\cPE)$ induced by the
  $L^1$-Wasserstein distance $W_1$ on $\cPE$. Since $W_1$ is bounded on $\cPE$, $(\cP(\cPE),\widetilde W_1)$ is compact. We claim that 
  \begin{align}\label{eq:W-W1}
    \cW_N(\mu^N_s,\mu^N_t) \geq \frac{C}{\sqrt{N}}W_{1,d}(\mu^N_s,\mu^N_t) \geq C\sqrt{N}\widetilde W_1(c^N_s,c^N_t)\;,
  \end{align}
  for some universal constant $C>0$. Indeed, to prove the first
  inequality we view $\mu^N_s,\mu^N_t$ as measures on $\R^{Nd}$
  equipped with the distance $d(\bv,\bu)=\sum_i|v_i-u_i|$ and let
  $W_{1,d}$ denote the $L^1$-Wasserstein distance induced by $d$. Then
  the estimate follows from \cite[Prop.~4.5]{Erb14} once we note that
  $\int_{\R^{Nd}}d(\bv,\bu)^2J(\bv,\dd \bu)\leq CN$, where $C$ depends
  on the moment bound in \eqref{eq:moment-bounded}. The second
  inequality follows from the fact that the map $L_N$ is
  $1/N$-Lipschitz from $(\R^{Nd},d)$ to $(\cP(\R^d),W_1)$. Together
  with \eqref{eq:W-W1}, \eqref{eq:action-bounded1} implies that the
  curves $(c^N_t)_t$ are uniformly equicontinuous in $\cP(\cPE)$
  w.r.t.~the distance $\widetilde W_1$. Thus, the Arzela--Ascoli
  theorem yields that there exists a continuous curve
  $(c_t)_{t\in[0,T]}$ in $\cP(\cPE)$ such that up to extraction of a
  subsequence we have that $c^N_t\rightharpoonup c_t$ weakly for all
  $t\in[0,T]$.

  Finally, assume in addition \eqref{eq:moment-bounded} and let us
  show that $c_t$ is concentrated on $\cP_*(\R^d)$ for all $t$. We
  need to show $c_t\big(\{\cM=0,\cE=d\}\big)=1$. Since
  $c^N_t\big(\{\cM=0\}\big)=1$ and $\cM$ is continuous on $\cPE$, and
  hence $\{\cM=0\}$ is closed, the weak convergence
  $c^N_t\rightharpoonup c_t$ implies that
  $c_t\big(\{\cM=0\}\big)=1$. It remains to show that
  $c_t\big(\{\cE=d\}\big)=1$. Since $c_t$ is concentrated on
  $\cPE=\{\cE\leq d\}$, it suffices to show that
  $\ip{\cE,c_t}=\lim_N\ip{\cE,c^N_t}=d$. Set
  $\cE_p(\eta):=\int |v|^p\dd\eta(v)$, then \eqref{eq:moment-bounded}
  implies that for any $t$:
  \begin{align}\label{eq:moment-bound2}
    \sup_N \ip{\cE_p,c^N_t}<\infty\;.
  \end{align}
  Note that $\cE_2=\cE$. Since by Jensen's inequality we have
  $\cE_2(\nu)^{p/2}\leq \cE_p(\nu)$, \eqref{eq:moment-bound2} readily
  yields that $\cE_2$ is uniformly integrable
  w.r.t.~$c^N_t$. Moreover,
  $\sup_Nc^N_t\big(\{\cE_{2+\eps}\geq R\}\big)\to0$ as $R\to\infty$
  for $\eps<p-2$ and $\cE_2$ is continuous on
  $\{\cE_{2+\eps}\leq R\}$. Thus the we obtain the desired convergence
  $\ip{\cE_2,c_t}=\lim_N\ip{\cE_2,c^N_t}$, see
  e.g.~\cite[Prop.~5.1.10]{AGS08}.
\end{proof}

\subsubsection{Superposition principle and limits for the action and dissipation}
\label{sec:comp-liminf-action-dissipation}

\begin{proposition}[Superposition principle for the limit curve]\label{prop:superposition}
  Let $(\mu^N_t)_{t\in[0,T]}$ be a sequence of curves in $\cP(\cX_N)$
  satisfying \eqref{eq:action-bounded1} and \eqref{eq:moment-bounded},
  put $c^N_t=(L_N)_\#\mu^N_t$, and let $(c_t)_{t\in[0,T]}$ be the
  limit curve of Lemma \ref{lem:curve-conv}. Then, there exists a Borel
  probability measure $\Theta$ on $C\big([0,T],\cP(\R^d)\big)$ and a Borel
  family of measures $(\cU^\eta_t)_{t\in[0,T],\eta\in\cP(\R^d)}$ such that
  the following hold:
  \begin{itemize}
  \item $c_t=(e_t)_\#\Theta$ for all $t\in[0,T]$,
  \item for $\Theta$-a.e.~curve $(\eta_t)_{t\in[0,T]}$, the pair
    $(\eta_t,\cU^{\eta_t}_{t})_{t\in[0,T]}$ belongs to $\CE_T$, and $\eta_t\in \cPpE$ for $p=2+\max(\gamma,0)$ and suitable $E>0$ and all $t\in [0,T]$.   
  \end{itemize}
\end{proposition}

\begin{proposition}[$\liminf$-inequality for action and
  dissipation]\label{prop:liminf-action-diss}
   In the setting of Proposition \ref{prop:superposition} we have
  \begin{align}
    \label{eq:action-conv}
    \liminf\limits_N \frac1N\cA^N_T(\mu^N) &\geq \int\cA_T(\eta)\;\dd\Theta(\eta)\;,\\    \label{eq:dissipation-conv}
    \liminf\limits_N \frac1N \int_0^TD^N(\mu^N_t)\dd t &\geq \int\left[\int_0^T D(\eta_t)\dd t\right] \dd \Theta(\eta)\;,
  \end{align}
  where $\cA_T(\eta)$, $D(\eta)$ are the action and dissipation defined in
  \eqref{eq:action-curve-def},
 \eqref{eq:def-dissipation}.
\end{proposition}

In order to prove the superposition principle Proposition
\ref{prop:superposition}, we will describe curves in $\cP(\cP(\R^d)$
as curves in $\cP(\R^\infty)$ by choosing a countable number of
coordinates given by integrals against test functions. This allows to
employ a superposition principle for solutions to the continuity
equation over $\R^\infty$ by Ambrosio and Trevisan \cite{AT14}. Let us
briefly recall this result.

Consider $\R^\infty=\R^\N$ and let $p_i:\R^\infty\to\R$ be the natural
projections for $i\in\N$ and let
$\pi_n=(p_1,\dots,p_n):\R^\infty\to\R^n$. Equip $\R^\infty$ with the separable and complete distance
\begin{align*}
  d_\infty(x, y)=\sum_{n=1}^\infty 2^{-n}\min\{1,|p_n(x)-p_n( y)|\}\;.
\end{align*}
In a similar way, $C\big([0,T],\R^\infty\big)$ can be equipped with a
separable and complete distance. We denote by
$AC_w\big([0,T],\R^\infty\big)$ the subset of
$C\big([0,T],\R^\infty\big)$ consisting of all $\gamma$ such that
$p_i\circ\gamma\in AC\big([0,T],\R\big)$ for all $i$.

A function $F:\R^\infty\to \R$ is called \emph{smooth cylindrical}, if it is of the form
\begin{align*}
  F(x)=\psi\big(p_1(x),\dots,p_n(x)\big)\;,
\end{align*}
for some $\psi\in C^1_b(\R^n)$ and $n\in\N$. Its gradient $\nabla F:\R^\infty\to\R^\infty$ is defined by
\begin{align*}
  \nabla F(x)=\left(\partial_1\psi\big(\pi_n(x)\big),\dots,\partial_n\psi\big(\pi_n(x)\big),0,0,\dots\right)\;.
\end{align*}
Then, we have the following representation result.
\begin{theorem}{\cite[Thm.~7.1]{AT14}}\label{thm:rep}
  Let $\bb:(0,T)\times\R^\infty\to\R^\infty$ be a Borel vector field
  and let $(\nu_t)_{t\in(0,T)}$ be a family of Borel probability
  measures on $\R^\infty$ continuous in duality with smooth cylinder
  functions satisfying
  \begin{align}\label{eq:cond1}
    \int_0^T|p_i(\bb_t)|\dd\nu_t\dd t <\infty \quad \forall i\in \N\;,
  \end{align}
  and in the sense of distributions in $(0,T)$
  \begin{align}\label{eq:cond2}
    \ddt \int F\dd\nu_t = \int (\bb_t,\nabla F)\dd\nu_t\quad\forall F \text{ smooth cylindrical}\;.
  \end{align}
  Then, there exists a Borel probability measure $\boldsymbol \lambda$
  on $C\big([0,T],\R^\infty\big)$ satisfying
  $(e_t)_\#\boldsymbol\lambda=\nu_t$ for all $t$, concentrated on
  $\gamma\in AC_w\big([0,T],\R^\infty\big)$ solving the ODE
  $\dot\gamma=\bb_t(\gamma)$ a.e.~in $(0,T)$.
\end{theorem}

\begin{proof}[Proof of Proposition \ref{prop:superposition}]
  We will proceed in 3 steps. Starting from a solution to the discrete
  continuity equation over $\cX_N$ we pass to the empirical measure
  and obtain a limiting family of collision rates $\cU^\eta_t$. Then, by choosing integrals against a
  collection of test functions as coordinates, we describe the
  limiting curve $c$ via a continuity equation over $\R^\infty$ with a
  vector field determined by the collision rates
  $\cU^\eta_t$. Finally, we apply the superposition principle for
  $\R^\infty$ and see that the obtained random curve in $\R^\infty$ is
  indeed the coordinate description of a random curve $(\eta_t)$ in $\cP(\R^d)$
  solving the collision rate equation driven by the rates $\cU^{\eta_t}_t$.

  \emph{Step 1: Limiting collision rate.} Recall from Section
  \ref{sec:kac-gf} that we can choose measures
  $\cV^N_t\in \cM(\cX_N\times\cX_N)$ such that
  $\cA^N_T(\mu^N)=\int_0^T\cA_N(\mu^N_t,\cV^N_t)\dd t$. Let us define
  the measures $\cV^N:=\cV^N_t\dd t$ and $\mu^{N,k}:=\mu^{N,k}_t\dd t$, $k=1,2$, 
  in $\cM(\cX^N\times\cX^N\times[0,T])$.  Note that by the structure of
  the jump kernel $J$, for any~$(\bv,\bu)$ in the support of
  $\mu^{N,1}_t,\mu^{N,2}_t$, with $\bv\neq \bu$, there exist
  unique $(i,j,\omega)$ with $1\leq i<j\leq N$, $\omega\in S^{d-1}$
  such that $\bu=R^\omega_{ij}(\bv)$ (when $\bv=\bu$, we pick $i=j$
  and $\omega$ at random). We push forward $\cV^N,\mu^{N,k}$ by the
  map $(\bv,\bu)\mapsto (L_N(\bv),L_N(\bu),v_i,v_j,\omega)$ with
  $i,j,\omega$ as above. This defines measures $\gamma^N, \beta^{N,k}$
  on $\cPpE^2\times (\R^d)^2\times S^{d-1}\times[0,T]$. We find that
  \begin{align}\nonumber
    \dd\beta^{N,1}(\eta,\eta',v,\vs,\omega,t)&=\frac{N}{2}\delta_{\eta^{N,v,\vs,\omega}}(\dd\eta')B(v-\vs,\omega)\eta(\dd v)\eta(\dd\vs)\dd\omega \dd c^N_t(\eta)\dd t\\\label{eq:betaN1}
                                             &=\frac{N}{2}\delta_{\eta^{N,v,\vs,\omega}}(\dd\eta')\dd\eta^1(v,\vs,\omega)\dd c^N_t(\eta)\dd t\;,\\\nonumber
    \dd\beta^{N,2}(\eta,\eta',v,\vs,\omega,t)&=
\frac{N}{2}\delta_{\eta'^{N,T_\omega^{-1}(v,\vs),\omega}}(\dd\eta)B(v-\vs,\omega)\\\nonumber
&\qquad\dd(T_\omega)_\#\eta'^{\otimes2}(v,\vs)\dd\omega \dd c^N_t(\eta')\dd t\\\label{eq:betaN2}
&=      \frac{N}{2}\delta_{\eta'^{N,T_\omega^{-1}(v,\vs),\omega}}(\dd\eta)\dd\eta'^2(v,\vs,\omega)\dd c^N_t(\eta')\dd t\;,
  \end{align}
  where we set
  $\eta^{N,v,\vs,\omega}=\eta+\frac1N(\delta_{\vp}+\delta_{\vsp}-\delta_v-\delta_{\vs})$
  with $v,\vs,\vp,\vsp$ related via \eqref{eq:pre-post2} and recall
  that $c^N_t=(L_N)_\#\mu^N_t$ and recall the notation \eqref{eq:defmu12}. To see this, note that $L_N(\bu)=L_N(\bv)^{N,v_i,v_j,\omega}$ if $\bu=R^\omega_{ij}(\bv)$ and that we can write
\begin{align*}
  \sum_{i,j=1}^Nf(v_i,v_j)
  = N^2\int f(v,\vs) L_N(\bv)(\dd v) L_N(\bv)(\dd \vs)\;.  
\end{align*}
To obtain the expression for $\beta^{N,2}$, note further, that if
$\bv=R^\omega_{i,j}(\bu)$, we have that $(v_i,v_j)=T_\omega(u_i,u_j)$.
 
From the weak convergence of $c^N_t$ to $c_t$ for all $t$ granted by Lemma \ref{lem:curve-conv}, we infer that as $N\to\infty$ we have
$\frac2N\beta^{N,k}\rightharpoonup\beta^k$ in duality with $C_b$ where
\begin{align}\label{eq:betak}
  \dd\beta^k(\eta,\eta',v,\vs,\omega,t)=\delta_\eta(\dd\eta')\dd\eta^k(v,\vs,\omega)\dd c_t(\eta)\dd t\;.
\end{align}
From Lemma \ref{lem:Fprops} (ii) we infer that
\begin{align*}\
  \cF_\alpha\left(\frac2N\beta^{N,1},\frac2N\beta^{N,2},\frac2N\gamma^N\right)\leq \frac2N\cF_\alpha\big(\mu^{N,1},\mu^{N,2},\cV^N\big)=\frac1{N}\cA^N_T(\mu^N)\;,
\end{align*}
and the last expression is bounded by assumption. From Lemma
\ref{lem:integrability} we infer as in the proof of Proposition
\ref{prop:opt-collision-rate} that $\frac2N\gamma^N$ has uniformly bounded
variation and hence converges weakly* up to a further subsequence to a limit
$\gamma$. By lower semicontinuity and homogeneity of $\cF_\alpha$
we find
\begin{align}\label{eq:lsc-act-1}
    \cF_\alpha(\beta^1,\beta^2,\gamma)\leq \liminf\limits_N\frac1N\cA^N_T(\mu^N)\;.
 \end{align}
 As in Lemma \ref{lem:densities} we infer from finiteness of the left hand side that
 $\gamma$ is absolutely continuous w.r.t.~the measure 
$L:=\delta_\eta(\dd\eta')\Lambda(\eta^1,\eta^2)c_t(\dd\eta)\dd t$, where $\Lambda(\eta^1,\eta^2):=\Lambda(\frac{\dd\eta^1}{\dd\sigma},\frac{\dd\eta^2}{\dd\sigma})\dd\sigma$ for any $\sigma$ such that $\eta^1,\eta^2\ll\sigma$. Hence there exists a Borel function $U:\cPpE^2\times(\R^d)^2\times S^{d-1}\times[0,T]\to \R$ such that $\gamma=UL$ and we can write 
 \begin{align}\label{eq:gamma-rep}
   \dd\gamma(\eta,\eta',v,\vs,\omega,t)=\delta_\eta(\dd\eta')\dd \cU^\eta_{t}(v,\vs,\omega)\dd c_t(\eta)\dd t\;,
 \end{align}
 where $(\cU^\eta_{t})_{\eta,t}$ is the Borel family of measures defined by $$\dd \cU^\eta_t(v,\vs,\omega)=U(\eta,\eta,v,\vs,\omega,t)\dd\Lambda(\eta^1,\eta^2)(v,\vs,\omega)\;.$$ Note further that
 \begin{align}\label{eq:action-conv2}
    \cF_\alpha(\gamma,\beta^1,\beta^2)= \int_0^T\int\cA(\eta,\cU^\eta_{t})\dd c_t(\eta)\dd t\;.
 \end{align}
\smallskip

\emph{Step 2: Continuity equation in $\R^\infty$.}  We now describe the
curve $(c_t)$ as an evolution in $\cP(\R^\infty)$. Fix a countable
collection $\{f_i\}_{i\in\N}$ of functions that is dense
(w.r.t.~uniform convergence) in the set of $1$-Lipschitz functions on
$\R^d$ vanishing at $0$. Define a map $I:\cPpE\to\R^\infty$ by setting
 \begin{align*}
   I(\eta):= \big(\ip{f_1,\eta},\ip{f_2,\eta},\dots\big)\;,
 \end{align*}
 and write $I^m=\pi_m\circ I$. Note that $I$ is injective and continuous w.r.t.~the distance $W_1$
 on $\cPpE$ by Kantorovich duality. $I(\cPpE)$ is closed in $\R^\infty$,
 since $(\cPpE,W_1)$ is compact, and $I^{-1}:I(X)\to\cPpE$ is
 continuous w.r.t.~$W_1$. 

 We define a curve $(\nu_t)_{t\in[0,T]}$ via $\nu_t:=I_\#c_t$ and note
 that it is continuous in duality with smooth cylinder functions by continuity of $t\mapsto c_t$. We define a Borel vector
 field $\bb:(0,T)\times\R^\infty\to\R^\infty$ via
 \begin{align}\label{eq:def-bb}
   \bb^i_t(x)=
   \begin{cases}
   \frac14\int \bar\nabla f_i\dd\cU^\eta_t & x=I(\eta)\in I(\cPpE)\;,\\
    0\;, &x\notin I(\cPpE)\;.
   \end{cases}
 \end{align}
 We claim that $(\nu,\bb)$ satisfies the continuity equation in
 $\R^\infty$, i.e.~\eqref{eq:cond1}, \eqref{eq:cond2}. Indeed,
 \eqref{eq:cond1} follows from \eqref{eq:action-conv2} and
 \eqref{eq:lsc-act-1} with Corollary \ref{cor:more-integrability}. To show \eqref{eq:cond2}, fix a smooth cylinder
 function $F(\bx)=\psi\big(p_1(x),\dots,p_n(x)\big)$ and $a\in
 C^\infty_c(0,T)$. From the continuity equation for $(\mu^N_t,\cV^N_t)$ we obtain after
 passing to the empirical measure
\begin{align*}
  \int_0^Ta'(t)\int F\circ I\dd c^N_t\dd t &= -\frac{1}{2}\int a(t)\big[F\big(I(\eta^{N,v,\vs,\omega})\big)-F\big(I(\eta)\big)\big]\dd\gamma^N\;.
\end{align*}
Note that
$F\big(I(\eta^{N,v,\vs,\omega})\big)-F\big(I(\eta)\big)=\frac1N\sum_i\partial_i\psi\big(I^m(\eta))\bar\nabla
f_i(v,\vs,\omega)+o(1)$.
We infer from the convergence of $c^N_t$ to $c_t$ and of
$\frac2N\gamma^N$ to $\gamma$ and \eqref{eq:gamma-rep} that
\begin{align}\nonumber
  \int_0^T a'(t) \int F\circ I \dd c_t~\dd t &= -\frac14 \int_0^Ta(t)\int  \sum_i\partial_i\psi\big(I^m(\eta)\big) \bar\nabla f_i\dd\cU^{\eta}_t\dd c_t(\eta)\dd t\\\label{eq:limit-ce1}
                                             &=-\int_0^Ta(t)\int \ip{\bb_t,\nabla F}\dd\nu_t\dd t \;,
\end{align}
which is \eqref{eq:cond2}.  \smallskip

\emph{Step 3: Probabilistic representation.} By Theorem \ref{thm:rep}
there exists a Borel probability measure $\boldsymbol\lambda$ on
$C\big([0,T],\R^\infty\big)$ concentrated on the solutions
$\gamma\in AC_w\big([0,T],\R^\infty\big)$ to the ODE
$\dot\gamma=\bb_t(\gamma)$ such that
$(e_t)_\#\boldsymbol\lambda=\nu_t$ for all $t$. Since $\nu_t$ is
concentrated on the closed set $I(\cPpE)$ for all $t$ we have that
$\bx_t\in I(\cPpE)$ for all $t\in[0,T]$ and $\boldsymbol\lambda$
a.e.~$\gamma$. Thus we can set $\Theta=\iota_{\#}\boldsymbol \lambda$,
where $\iota$ maps $\gamma\in C\big([0,T],\R^\infty\big)$ to
$I^{-1}\circ \gamma\in C\big([0,T],\cPpE\big)$.  It remains to check
that $\Theta$ has the desired properties.

   Since $\nu_t=I_\#c_t$ we immediately get $(e_t)_\#\Theta=c_t$ for all $t$.
   Further, since for fixed $i$ we have $\ip{f_i,\iota(\gamma)}=\pi_i(\gamma)$, we have by \eqref{eq:def-bb} that
   $t\mapsto \ip{f_i,\eta_t}$ is absolutely continuous and 
   \begin{align}\label{eq:CREi}
     \ddt \ip{f_i,\eta_t} = +\frac14\int \bar\nabla f_i\dd\cU^{\eta_t}_t\quad \text{for a.e.~$t\in(0,T)$, for $\Theta$-a.e.~$\eta$}\;. 
   \end{align}
   From \eqref{eq:action-conv2} and \eqref{eq:lsc-act-1} we obtain
   with Corollary \ref{cor:more-integrability} that the integrability condition
   \eqref{eq:more-integrability-U} holds (with $p=1$). This allows to extend
   \eqref{eq:CREi} to all Lipschitz $f$. Hence for $\Theta$-a.e.~curve
   $\eta$ we have that $t\mapsto(\eta_t,\cU^{\eta_t}_t)$ belongs to
   $\CE_T^E$.
\end{proof}

\begin{proof}[Proof of Proposition \ref{prop:liminf-action-diss}:]
 We recall from \eqref{eq:lsc-act-1} and \eqref{eq:action-conv2} that 
 \begin{align}\label{eq:liminf-action}
   \int_0^T\int\cA(\eta,\cU^\eta_{t})\dd c_t(\eta)\dd t
   \leq
   \liminf\limits_N\frac1N\cA^N_T(\mu^N)\;.
 \end{align}
 We obtain a $\liminf$ estimate for the dissipation in a similar
 fashion. We note that $D^N(\mu^N_t)=2\cG(\mu^{N,1},\mu^{N,2})$, where
 $\cG$ is the integral functional defined in the proof of Lemma
 \ref{lem:diss-lsc}. From Lemma \ref{lem:Fprops} we obtain
  \begin{align}\nonumber
     \liminf\limits_N \int_0^T\frac1{N}D^N(\mu^N_t)\dd t &\geq \liminf\limits_N\cG\left(\frac2N\beta^{N,1},\frac2N\beta^{N,2}\right)
    \geq\cG\left(\beta^{1},\beta^{2}\right)\\\label{eq:dissipation-conv2} &= \int_0^T\int D(\eta)\dd c_t(\eta)\dd t\;,
   \end{align}
   where we recall the definition of $\beta^{N,k}$ and $\beta^k$ from
   \eqref{eq:betaN1}, \eqref{eq:betaN2}, \eqref{eq:betak}.  By
   Proposition \ref{prop:superposition} we can then
   rewrite \eqref{eq:liminf-action} and \eqref{eq:dissipation-conv2} as
   \eqref{eq:action-conv} and \eqref{eq:dissipation-conv}, noting that
   $\Theta$-a.e.~curve $(\eta_t)_t$ satisfies $\cA_T(\eta)\leq\int_0^T\cA(\eta_t,\cU_t^{\eta_t})\dd t$.
\end{proof}

\subsubsection{Limit for the relative entropy}
\label{sec:liminf-ent}


\begin{proposition}[$\liminf$-inequality for the entropy]\label{prop:entropy-conv}
  Let $(\mu^N)_N$ be a sequence of measures in $\cP(\cX_N)$ such that
  $c^N=(L_N)_\#\mu^N$ converges weakly to $c\in\cP(\cPpE)$. Then we have that
  \begin{align}\label{eq:entropy-conv}
    \liminf\limits_N \frac1N \cH(\mu^N|\pi_N) &\geq \int\cH(\eta|M)~\dd c(\eta)\;.
  \end{align}
\end{proposition}

To prove this result, we will rely on ideas from large deviation
theory. Namely, we will exploit the fact that the empirical measure of
independent Gaussian distributed points in $\R^d$ satisfies a large
deviation principle and that this implies a $\Gamma-\liminf$
inequality for the relative entropy w.r.t.~the law of this empirical
measure. Then we will conclude by relating the entropy w.r.t.~$\pi_N$
to the entropy w.r.t.~the product Gaussian distribution.  Let us
briefly explain the concepts we will be using. For background on large
deviation theory we refer to \cite{Dembo1998}.

Let $\mathcal X$ be a Polish space and equip the set of Borel probability
measures $\cP(\mathcal X)$ with the weak topology. Let $I:\mathcal X\to [0,\infty]$ be a
lower semicontinuous function. A sequence of measures $(m_N)_N$ in
$\cP(\mathcal X)$ is said to satisfy a \emph{large deviation principle} with
\emph{rate function} $I$ (and speed $N$) if for any open set $O$ and
any closed set $C$ in $\mathcal X$ their probabilities are asymptotically
controlled as:
  \begin{align*}
  \liminf _N\frac1N\log m_N(O) \geq -\inf_{x\in O}I(x)\;,\quad
 \limsup _N\frac1N\log m_N(C)   \leq -\inf_{x\in C}I(x)\;.
 \end{align*}
 If the second inequality holds only for all compact sets $C$, we speak
 of a \emph{weak large deviation upper bound}. This weak upper bound
 is equivalent to a $\Gamma-\liminf$ inequality for the
 relative entropy: 
 \begin{lemma}[{\cite[Thm.~3.5]{Mar18} (P1)$\Leftrightarrow$(H2)}]\label{lem:ldp-gamma}
  $(m_N)$ satisfies a weak large deviation upper bound with rate function $I$ and speed $N$ if and only if for any sequence $(\mu_N)$ in $\cP(\mathcal X)$ converging to $\mu$ we have
  \begin{align*}
    \liminf_N\frac1N \cH(\mu_N|m_N) \geq \int_{\mathcal X} I\dd\mu\;.
  \end{align*}
 \end{lemma}

 We will also use the following desintegration principle for the
 relative entropy, which can be verified by a direct computation.
 
 Let $\mathcal Y$ be a further Polish space, $\mu, m$ two probability measures
 on $\mathcal X$, and $T:\mathcal X\to \mathcal Y$ be a Borel map. Let $\mu(\cdot|T=y)$ and
 $m(\cdot|T=y)$ denote the desintegration of $\mu$ and $m$
 w.r.t.~$T$. I.e.~$\mu(\cdot|T=y)$ are probability measures
 concentrated on $T^{-1}(y)$ such that for any measurable set
 $A\subset\cX$, $y\mapsto\mu(A|T=y)$ is measurable, and
\begin{align*}
  \mu(A)= \int_{\mathcal Y}\mu(A|T=y)\dd T_{\#}\mu(y)\;,
\end{align*}
and similarly for $m$.
Then we have that
\begin{align}\label{eq:ent-desint}
  \cH(\mu|m)= \cH(T_{\#}\mu|T_{\#}m) +\int_{\mathcal Y} \cH\big(\mu(\cdot|T=y)|m(\cdot|T=y)\big)\dd T_{\#}\mu(y)\;.
\end{align}
Since the relative entropy is non-negative, we have in particular
\begin{align}\label{eq:ent-push}
  \cH(\mu|m)\geq \cH(T_{\#}\mu|T_{\#}m)\;.
\end{align}

\begin{proof}[Proof of Proposition \ref{prop:entropy-conv}:]
  (i) Let $\gamma_N\in\cP(\R^{Nd})$ denote the distribution of $N$ independent standard $d$-dimensional Gaussian vectors, i.e. $\gamma_N$ has density 
  \begin{align*}
   g_N(v_1,\dots,v_N) = (2\pi)^{-Nd/2}\exp\left(-\sum_{i=1}^N\frac{|v_i|^2}{2}\right)\;
  \end{align*}
w.r.t.~Lebesgue measure on $\R^{Nd}$. Note that $\pi_N$ is obtained by conditioning $\gamma_N$ to $\cX_N\subset\R^{dN}$, i.e.
\begin{align*}
  \pi_N=\gamma_N\left(\cdot~|~\cM^N=0,\cE^N=d\right) = \frac{g_N}{\int_{\cX_N}g_N\dd\pi_N}\pi_N\;,
\end{align*}
 with $\cM^N(\bv)=1/N\sum_iv_i$ and $\cE^N(\bv)=1/N\sum_i|v_i|^2$. This follows immediately from $g_N$ being constant on $\cX_N$.

  (ii) We now claim that the analog of \eqref{eq:entropy-conv} holds for $\gamma_N$: if $\tilde\mu^N$ is a sequence in $\cP(\R^{Nd})$ such that $c^N=(L_N)_\#\tilde\mu^N$ converges weakly to $c$, then
  \begin{align}\label{eq:entropy-conv-gamma}
    \liminf\limits_N \frac1N \cH(\tilde\mu^N|\gamma_N) &\geq \int\cH(\eta|M)~\dd c(\eta)\;.
  \end{align} 
  Setting $m_N:=(L_N)_\#\gamma_N$ we obtain from
  \eqref{eq:ent-push} that
  $\cH(\tilde \mu^N|\gamma_N)\geq \cH(c^N|m_N)$. Thus, it suffices to
  show that
  \begin{align}\label{eq:ent-conv-aux}
    \liminf_N\frac1N\cH(c^N|m_N)\geq \int \cH(\eta|M)c(\dd\eta)\;.
  \end{align}
  By Sanov's theorem on large deviations for empirical measures
  \cite[Thm.~6.2.10]{Dembo1998}, $m_N$ satisfies a large deviation
  principle with rate function $\cH(\cdot|M)$ on $\cP(\R^d)$ equipped
  with the weak topology. Thus, \eqref{eq:ent-conv-aux} follows from
  Lemma \ref{lem:ldp-gamma}.

  (iii) Finally, we will conclude by relating $\cH(\cdot|\gamma_N)$
  and $\cH(\cdot|\pi_N)$. For $m\in\R^d$, $E>0$, define
  $\Psi_{m,E}:\R^{Nd}\to\R^{Nd}$ by
  $\Psi_{m,E}(\bv)=(\sqrt{E}v_1+m,\dots,\sqrt{E}v_n+m)$. Let
  $Q_N=(\cM^N,\cE^N)_\#\gamma_N$ in $\cP\big(\R^d\times[0,\infty)\big)$ be
  the distribution of momentum and energy under $\gamma_N$. We have
  $\gamma_N(\cdot~|~\cM^N=m,\cE^N=E)=(\Psi_{m,E/d})_\#\pi_N$ as in
  (i). Hence $\gamma_N$ disintegrates as
  $\gamma_N=\int(\Psi_{m,E/d})_\#\pi_N\dd Q_N(m,E)$. Define a map
  $\boldsymbol\Psi:\cP(\cX_N)\to\cP(\R^{Nd})$ via
\begin{align*}
  \boldsymbol\Psi(\mu)=\int(\Psi_{m,E/d})_\#\mu~\dd Q_N(m,E)\;.
\end{align*}
Note that $(\cM^N,\cE^N)_\#\boldsymbol\Psi(\mu)=Q_N$. Thus, the desintegration formula \eqref{eq:ent-desint} with $T=(\cM^N,\cE^N)$ gives
\begin{align*}
  \cH\big(\boldsymbol\Psi(\mu)|\gamma_N\big)
  =
 \int \cH\big((\Psi_{m,E/d})_\#\mu|(\Psi_{m,E/d})_\#\pi_N\big)\dd Q_N(m,E)
 = \cH(\mu|\pi_N)\;,
\end{align*}
where the last equality follows from \eqref{eq:ent-push} and
$\Psi_{m,E}$ being bijective. Since $(L_N)_\#\mu^N\rightharpoonup c$ implies
$(L_N)_\#\boldsymbol\Psi(\mu^N)\rightharpoonup c$, we can now 
deduce \eqref{eq:entropy-conv} from
\eqref{eq:entropy-conv-gamma}.
\end{proof}

\appendix
\section{The collision distance}\label{sec:metric}

In this section, we present a new type of distance between probability
measures on $\R^d$ which is formally the Riemannian distance
associated to the Onsager operator $\mathcal K^B$, see \eqref{eq:Onsager}.  The Riemannian
distance $\cW_B$ between to probability densities $f_0,f_1$ is
formally given as
\begin{align}\label{eq:intro-def-W}
  \cW_B(f_0,f_1)^2 = \inf \left\{\frac14\int_0^1\int |\dgrad\psi_t|^2\Lambda(f_t)B(v-\vs,\omega)\dd\omega\dd\vs\dd
    v\dd t \right\}\;,
\end{align}
where the infimum runs over all curves of densities $t\mapsto f_t$ connecting $f_0$ to $f_1$ and all functions
$\psi:[0,1]\times\R^d\to\R$ related via
\begin{align}\label{eq:intro-ce}
  \partial_tf_t(v)+\int\dgrad\psi_t\Lambda(f_t)B(v-\vs,\omega)\dd\omega\dd\vs
  = 0\;.
\end{align}
Note that the definition of $\cW_B$ resembles the dynamic formulation
of the $L^2$-Wasserstein distance, known as the Benamou--Brenier
formula \cite{BB00}. Here, the \emph{collision rate equation}
\eqref{eq:intro-ce} takes over the role of the usual continuity
equation.

The distance $\cW_B$ will be constructed by relaxing the minimization
problem above to a measure valued framework and by minimizing the
action as defined in Section \ref{sec:CRE-action} over curves
connecting two given probability measures via the collision rate
equation.

In this section, we will relax the assumptions on the collision kernel
and require:
\begin{assumption}\label{ass:standing}
  $B:\R^d\times S^{d-1}\to\R_+$ is measurable, invariant under the
  transformation \eqref{eq:pre-post2}, such that
  $k\mapsto B(k,\omega)$ is continuous for a.e.~$\omega$ and there
  exist constants $\gamma\in(-\infty,1]$ and $c_B>0$ such that
  \begin{align}\label{eq:ass-standing}
     \int_{S^{d-1}}B(k,\omega)\dd\omega \leq c_B\ip{k}^\gamma \quad\forall k\in\R^d\;.
  \end{align} 
\end{assumption}

The following result will allow us to extract subsequential limits
from sequences of solutions to the collision rate equation with
uniform action and moment bounds. 

Given $p\geq 1$ and $E>0$ we will write
\[\CE_T^{p,E}:=\big\{(\mu,\cU)\in\CE_T~:~\mu_t\in\cPpE~\forall t\in[0,T]\big\}\;.\]

\begin{proposition}[Compactness of solutions with bounded action and
  moments]\label{prop:cecompactness}
  Let $(\mu^n,\cU^n)$ be a sequence in $\CE_{T}^{p,E}$ with $p\geq 2$ such that 
\begin{equation}\label{eq:bdd-energy}
 \sup\limits_n\int_0^T\cA(\mu_t^n,\cU_t^n) \dd t~<~\infty\;.
\end{equation}
Then there exists a couple
$(\mu,\cU)\in\CE_{T}^{p,E}$ such that up to
extraction of a subsequence
\begin{align*}
 & \mu^n_t\rightharpoonup\mu_t\quad \mbox{weakly in }\cP(\R^d)\ \mbox{for all } t\in[0,T]\ ,\\
 & \cU^n\rightharpoonup^*\cU\quad \mbox{weakly* in }\cM(\Omega\times[0,T])\ .
\end{align*}
Moreover, along this subsequence we have :
\begin{equation*}
 \int_0^T\cA(\mu_t,\cU_t) \dd t~\leq~\liminf\limits_n \int_0^T\cA(\mu_t^n,\cU_t^n) \dd t\ .
\end{equation*}
\end{proposition}

\begin{proof}
  Thanks to the uniform bounds on action and moments, we can proceed
  verbatim as in the proof of Proposition
  \ref{prop:opt-collision-rate} to obtain existence of a Borel family
  $(\cU_t)_{t\in[0,T]}$ satisfying (iii) of Definition \ref{def:ce}
  such that $\cU^n_t\dd t$ converges weakly* to $\cU_t\dd t$ and the
  convergence \eqref{eq:converge-bnu2} holds.  By a further argument
  based on \eqref{eq:unif-integrability3}, we can approximate the
  indicator function $\one_{(t_0,t_1)}$ for any $0\leq t_0<t_1\leq T$
  by functions $a\in C\big([0,T]\big)$ and obtain for any
  $\xi\in C_b(\R^d)$:
  \begin{align}\label{eq:converge-bnu}
    \int_{t_0}^{t_1}\int\dgrad\xi \dd\cU^n_t
    \dd t~\overset{n\to\infty}{\longrightarrow}~\int_{t_0}^{t_1}\int\dgrad\xi
    \dd\cU_t \dd t\ .
  \end{align}
  Finally, we show existence of a limiting curve
  $(\mu_t)_{t\in[0,T]}$. Since $\cPpE$ is compact w.r.t.~weak
  convergence, after extraction of another subsequence we can assume
  that $\mu^n_0\rightharpoonup\mu_0$ weakly for some
  $\mu_0\in\cP(\R^d)$. Using this, the convergence
  \eqref{eq:converge-bnu} and the collision rate equation in the form
  \eqref{eq:cedistributionrefined} infer that $\mu^n_t$ converges
  weakly to some probability measure $\mu_t$ for every $t\in[0,T]$ and
  that $(\mu,\cU)$ satisfies
  \eqref{eq:cedistributionrefined}. In particular, $t\mapsto\mu_t$ is
  weakly continuous and hence $(\mu,\cU)\in\CE_T$. By lower
  semicontinuity of moments, we infer $\cE_p(\mu_t)\leq E$ for all
  $t$. The lower semicontinuity statement follows from Lemma
  \ref{lem:Fprops} by noting that
  $\int_0^T\cA(\mu^n_t,\cU^n_t)\dd
  t=\cF_\alpha(\mu^{n,1},\mu^{n,2},\cU^n)$
  with $\mu^{n,k}=\mu^{n,k}_t\dd t$.
\end{proof}

Given $p\geq 2$ and $E>0$ now define the following distance

\begin{definition}[Distance]\label{def:metric}
 For $\mu_0,\mu_1\in\cPpE$ we define
\begin{equation}\label{eq:defmetric}
\cW_B(\mu_0,\mu_1)^2:=\inf\left\{\int_0^1\cA(\mu_t,\cU_t)\dd t : (\mu,\cU)\in\CE_{1}^{p,E}(\mu_0,\mu_1)\right\}\;,
\end{equation}
with the convention that $\cW_B(\mu_0,\mu_1)=+\infty$ if
the set over which the infimum is taken, is empty.
\end{definition}

\begin{remark}\label{rem:moments}
  In the same way one could construct an (a priori smaller) extended
  distance on the full space $\cP(\R^d)$ by dropping the moment constraint
  and minimize over $\CE_1$ instead of $\CE_1^{p,E}$. We will not explore this here. We
  stress that $\cW_{B}$ defined as above depends implicitly on the choice of $p$ and $E$.
\end{remark}

Let us give an equivalent characterization of the infimum in
\eqref{eq:defmetric}.

\begin{lemma}\label{lem:equivcharacter}
For any $T>0$ and $\mu_0,\mu_1\in\cPpE$ we have :
\begin{align*}
\cW_B(\mu_0,\mu_1)~=~\inf\left\{\int_0^T
\sqrt{\cA(\mu_t,\cU_t)}\dd t\ :\quad
(\mu,\cU)\in\CE_{T}^{p,E}(\mu_0,\mu_1)\right\}\ .
\end{align*}
\end{lemma}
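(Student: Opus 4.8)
The plan is to prove the two inequalities separately, using the reparametrization invariance that is built into the action functional through its positive $1$-homogeneity in $(\,\cU, \mu^1,\mu^2)$ — but note the scaling in $\cU_t$ only, since $\mu_t$ is a probability measure for all $t$. The key observation is that if $(\mu,\cU) \in \CE_T^*(\mu_0,\mu_1)$ and $\sigma:[0,S]\to[0,T]$ is an absolutely continuous, nondecreasing, surjective change of time, then $(\tilde\mu_s, \tilde\cU_s) := (\mu_{\sigma(s)}, \sigma'(s)\,\cU_{\sigma(s)})$ is again a solution of the collision rate equation on $[0,S]$ with the same endpoints, and by the $1$-homogeneity of $\alpha$ in all its arguments we get $\cA(\tilde\mu_s,\tilde\cU_s) = \sigma'(s)\,\cA(\mu_{\sigma(s)},\cU_{\sigma(s)})$.

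First I would show $\cW_B(\mu_0,\mu_1) \le \inf\{\int_0^T \sqrt{\cA(\mu_t,\cU_t)}\,\dd t : (\mu,\cU)\in\CE_T^*(\mu_0,\mu_1)\}$. Given such a $(\mu,\cU)$ with finite integral, set $L(t) := \int_0^t \sqrt{\cA(\mu_r,\cU_r)}\,\dd r$ and $\mathcal{L} := L(T)$; after the trivial modification of adding a bit of "static" motion one may assume $\sqrt{\cA(\mu_t,\cU_t)} > 0$ a.e. (or handle the degenerate case directly), so that $L$ is strictly increasing with inverse $\sigma:[0,\mathcal{L}]\to[0,T]$. Reparametrize $s = L(t)/\mathcal{L}\cdot 1$, i.e. rescale to the unit interval, obtaining $(\hat\mu,\hat\cU)\in\CE_1^*(\mu_0,\mu_1)$ with $\cA(\hat\mu_s,\hat\cU_s) = \mathcal{L}^2$ constant a.e. (this is the standard "constant-speed" reparametrization, using that $\sigma'(s) = \mathcal{L}/\sqrt{\cA(\mu_{\sigma},\cU_{\sigma})}$). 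Then $\int_0^1 \cA(\hat\mu_s,\hat\cU_s)\,\dd s = \mathcal{L}^2 = (\int_0^T\sqrt{\cA}\,\dd t)^2$, giving $\cW_B(\mu_0,\mu_1)^2 \le (\int_0^T\sqrt{\cA}\,\dd t)^2$. Taking the infimum over competitors yields the bound.

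For the reverse inequality, given $(\mu,\cU)\in\CE_1^*(\mu_0,\mu_1)$, I would apply Cauchy--Schwarz: $\int_0^1\sqrt{\cA(\mu_t,\cU_t)}\,\dd t \le \big(\int_0^1\cA(\mu_t,\cU_t)\,\dd t\big)^{1/2}$. Now use the affine reparametrization $t \mapsto Tt$ from $[0,1]$ to $[0,T]$, which by homogeneity sends a competitor on $[0,1]$ to one on $[0,T]$ without changing the value of $\int\sqrt{\cA}$ (the factor $1/T$ from $\sigma'$ is compensated by $\sqrt{\sigma'}\cdot\sqrt{\sigma'}$ inside versus the Jacobian — concretely $\int_0^T\sqrt{\cA(\mu_{t/T}, T^{-1}\cU_{t/T})}\,\dd t = \int_0^T T^{-1/2}\sqrt{\cA(\mu_{t/T},\cU_{t/T})}\,\dd t = \int_0^1 \sqrt{\cA(\mu_s,\cU_s)}\,\dd s$). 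Hence for every competitor on $[0,1]$, $\inf_{\CE_T^*} \int_0^T\sqrt{\cA}\,\dd t \le \big(\int_0^1 \cA(\mu_t,\cU_t)\,\dd t\big)^{1/2}$, and taking the infimum over $\CE_1^*(\mu_0,\mu_1)$ gives $\le \cW_B(\mu_0,\mu_1)$. Combining both directions (and noting the right-hand side does not depend on $T$ by the same affine rescaling) finishes the proof.

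The main obstacle I anticipate is the rigorous handling of the reparametrization when $\cA(\mu_t,\cU_t)$ vanishes on a set of positive measure, so that $L$ fails to be strictly increasing: there one must either add a small regularizing perturbation and pass to the limit, or argue that the "frozen" time intervals can be excised without changing the endpoints or increasing the action. A second, more technical point is checking that the time-reparametrized pair genuinely solves the distributional collision rate equation \eqref{eq:cedistribution} and satisfies the integrability and moment conditions $(iii')$, $(v)$ of $\CE_T^*$ — this is a routine chain-rule-in-time computation testing against $\varphi(t,v) = \eta(t)\xi(v)$, using Lemma \ref{lem:cont-represent}, but it must be done carefully since $\sigma'$ is only $L^1$ and one needs absolute continuity of $t\mapsto\int\xi\,\dd\mu_t$ to survive the change of variables.
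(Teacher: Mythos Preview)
Your approach is exactly the standard reparametrization argument that the paper invokes (it simply cites \cite[Lem.~1.1.4]{AGS08} and \cite[Thm.~5.4]{DNS09} without spelling out details), so the strategy is correct and matches the paper.

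One slip to fix: the scaling of the action under $\cU\mapsto\lambda\cU$ is \emph{quadratic}, not linear. The joint $1$-homogeneity of $\alpha$ in $(u,s,t)$ is irrelevant here since $\mu$ (and hence $\mu^1,\mu^2$) is not being rescaled; what you actually use is that $\alpha(\lambda u,s,t)=\lambda^2\alpha(u,s,t)$, giving $\cA(\mu,\lambda\cU)=\lambda^2\cA(\mu,\cU)$. With this, your constant-speed reparametrization correctly yields $\cA(\hat\mu_s,\hat\cU_s)=\mathcal{L}^2$ (as you state), and in the affine rescaling step the factor should be $T^{-1}$ rather than $T^{-1/2}$, after which the change of variables $s=t/T$ gives the claimed identity $\int_0^T\sqrt{\cA}\,\dd t=\int_0^1\sqrt{\cA}\,\dd s$. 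Your intermediate line with $T^{-1/2}$ does not balance; once corrected, both directions go through exactly as you outlined.
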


\begin{proof}
  This follows from a standard reparametrization argument. See
  \cite[Lem. 1.1.4]{AGS08} or \cite[Thm. 5.4]{DNS09} for details in
  similar situations.
\end{proof}

The next result shows that the infimum in the definition above is
in fact a minimum.

\begin{proposition}\label{prop:minimizers}
  Let $\mu_0,\mu_1\in\cPpE$ be such that
  $W:=\cW_B(\mu_0,\mu_1)$ is finite. Then the infimum in
  \eqref{eq:defmetric} is attained by a curve
  $(\mu,\cU)\in\CE_{1}^E(\mu_0,\mu_1)$ satisfying
  $\cA(\mu_t,\cU_t)=W^2$ for a.e. $t\in[0,1]$.
\end{proposition}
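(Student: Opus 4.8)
The plan is the direct method of the calculus of variations, using the tools assembled in Section~\ref{sec:cre}. First I would take a minimizing sequence $(\mu^n,\cU^n)\in\CE_1^*(\mu_0,\mu_1)$, so that $\int_0^1\cA(\mu^n_t,\cU^n_t)\dd t\to W^2$. Since all $\mu^n_t$ lie in $\cP_*(\R^d)$, they have uniformly bounded (indeed constant) second moment $1$, so $v\mapsto|v|^2$ is uniformly integrable with respect to $\mu^n_0$; in particular $(\mu^n_0)_n$ is tight and the extra hypothesis \eqref{eq:ass-additional} is not needed. Proposition~\ref{prop:cecompactness} then applies in its last (moment-preserving) form: after passing to a subsequence we obtain a limit $(\mu,\cU)\in\CE_1^{0,1}=\CE_1^*$ with $\mu^n_t\rightharpoonup\mu_t$ for every $t$ — in particular $\mu_0$ and $\mu_1$ are preserved, so $(\mu,\cU)\in\CE_1^*(\mu_0,\mu_1)$ — and, by the lower semicontinuity statement in that proposition,
\begin{align*}
  \int_0^1\cA(\mu_t,\cU_t)\dd t~\leq~\liminf_n\int_0^1\cA(\mu^n_t,\cU^n_t)\dd t~=~W^2\;.
\end{align*}
Since $(\mu,\cU)$ is an admissible competitor, the reverse inequality holds as well, so $(\mu,\cU)$ is a minimizer and $\int_0^1\cA(\mu_t,\cU_t)\dd t=W^2$.

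It remains to upgrade this to the pointwise statement $\cA(\mu_t,\cU_t)=W^2$ for a.e.\ $t$. For this I would invoke the reparametrization principle behind Lemma~\ref{lem:equivcharacter}: by Jensen's inequality applied to $t\mapsto\sqrt{\cA(\mu_t,\cU_t)}$,
\begin{align*}
  W~=~\int_0^1\sqrt{\cA(\mu_t,\cU_t)}\dd t~\leq~\left(\int_0^1\cA(\mu_t,\cU_t)\dd t\right)^{1/2}~=~W\;,
\end{align*}
using both Lemma~\ref{lem:equivcharacter} (for the first inequality, or rather the characterization it provides together with the fact that $(\mu,\cU)$ is admissible) and the minimality just established. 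Equality in Jensen forces $t\mapsto\cA(\mu_t,\cU_t)$ to be a.e.\ constant, hence equal to its integral $W^2$. Alternatively, and perhaps more cleanly, one first replaces the minimizer by its arclength reparametrization (as in \cite[Lem.~1.1.4]{AGS08}): setting $s(t)=\int_0^t\sqrt{\cA(\mu_r,\cU_r)}\dd r/W$ and $\tilde\mu_s=\mu_{t(s)}$, $\tilde\cU_s=t'(s)\cU_{t(s)}$, one checks $(\tilde\mu,\tilde\cU)\in\CE_1^*(\mu_0,\mu_1)$ with $\cA(\tilde\mu_s,\tilde\cU_s)$ a.e.\ constant by the $1$-homogeneity of $\cA$ in its second slot, and then the action of $(\tilde\mu,\tilde\cU)$ is still $W^2$, so $\cA(\tilde\mu_s,\tilde\cU_s)=W^2$ a.e.

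The main obstacle is really all packed into Proposition~\ref{prop:cecompactness}, which is already proved in the excerpt — once one has compactness of the family of solutions to the collision rate equation together with lower semicontinuity of the action, the rest is soft. The one point that deserves care is checking that the uniform second-moment bound on $\cP_*(\R^d)$ legitimately triggers the final (moment-preserving) clause of Proposition~\ref{prop:cecompactness}, so that the limit pair lies in $\CE_1^*$ and the endpoint constraints $\mu_0,\mu_1$ are genuinely retained in the limit; this is where one uses that weak convergence of $\mu^n_t$ for \emph{every} $t$ (not just a.e.\ $t$) is part of the conclusion there. The reparametrization step is entirely standard and can be quoted.
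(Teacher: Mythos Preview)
Your argument is correct and matches the paper's proof: existence of a minimizer via the direct method and Proposition~\ref{prop:cecompactness}, then the constant-speed property via Lemma~\ref{lem:equivcharacter} and Jensen. One small correction to your justification: a uniform bound on the second moment does \emph{not} give uniform integrability of $|v|^2$; the reason the final clause of Proposition~\ref{prop:cecompactness} applies here is simply that $\mu^n_0=\mu_0$ is a single fixed measure (so the uniform-integrability hypothesis is vacuous), and likewise the endpoint $\mu_1$ is preserved because $\mu^n_1=\mu_1$ for all $n$.
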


\begin{proof}
  Existence of a minimizing curve
  $(\mu,\cU)\in\CE_{1}^E(\mu_0,\mu_1)$ follows immediately by
  the direct method taking into account Proposition
  \ref{prop:cecompactness}. Invoking Lemma \ref{lem:equivcharacter}
  and Jensen's inequality we see that this curve satisfies
  \begin{align*}
    \int_0^1\sqrt{\cA(\mu_t,\cU_t)}\dd
    t~\geq~W~=~\left(\int_0^1\cA(\mu_t,\cU_t)\dd
      t\right)^{\frac{1}{2}}~\geq~\int_0^1\sqrt{\cA(\mu_t,\cU_t)}\dd t\ .
  \end{align*}
  Hence we must have $\cA(\mu_t,\cU_t)=W^2$ for a.e. $t\in[0,1]$.
\end{proof}
 
We have the following properties of the function $\cW_B$.

\begin{theorem}\label{thm:distance}
  $\cW_B$ defines an (extended) distance on $\cPpE$. The topology it
  induces is stronger than the weak topology and bounded sets
  w.r.t.~$\cW_B$ are weakly compact. Moreover, the map
  $(\mu_0,\mu_1)\mapsto \cW_B(\mu_0,\mu_1)$ is lower semicontinuous
  w.r.t.~weak convergence. For each $\tau\in\cPpE$ the set
  $\cP_\tau:=\{\mu\in\cPpE\ :\ \cW_B(\mu,\tau)<\infty\}$ equipped
  with the distance $\cW_B$ is a complete geodesic space.
\end{theorem}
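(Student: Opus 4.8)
My plan is to extract all four assertions from Propositions \ref{prop:minimizers} and \ref{prop:cecompactness}, the recurring analytic input being Lemma \ref{lem:integrability} together with $\int_{S^{d-1}}B(k,\omega)\,\dd\omega\le C_B$ from Assumption \ref{ass:standing}. I would first dispose of the pseudo-metric axioms by elementary manipulations of the class $\CE_1^*$: non-negativity is obvious; $\cW_B(\mu,\mu)=0$ follows from the stationary pair $(\mu_t,\cU_t)\equiv(\mu,0)\in\CE_1^*(\mu,\mu)$, whose action vanishes since $\alpha(0,\cdot,\cdot)=0$; symmetry holds because the time reversal $(\mu_t,\cU_t)\mapsto(\mu_{1-t},-\cU_{1-t})$ is a bijection of $\CE_1^*(\mu_0,\mu_1)$ onto $\CE_1^*(\mu_1,\mu_0)$ preserving the action, as $\alpha(-u,s,t)=\alpha(u,s,t)$; and for the triangle inequality one concatenates, given $\eps>0$, two $\eps$-optimal curves in $\CE_1^*(\mu_0,\mu_1)$ and $\CE_1^*(\mu_1,\mu_2)$. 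A routine gluing — both pieces pass through $\mu_1\in\cP_*(\R^d)$, so $(i),(iii'),(v)$ survive the junction and $(iv)$ follows by adding the integrated identities \eqref{eq:cedistributionrefined} — gives an element of $\CE_2^*(\mu_0,\mu_2)$, and Lemma \ref{lem:equivcharacter}, whose objective $\int\sqrt{\cA}$ is additive under concatenation, yields $\cW_B(\mu_0,\mu_2)\le\cW_B(\mu_0,\mu_1)+\cW_B(\mu_1,\mu_2)+2\eps$.

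\textbf{Comparison with the weak topology and lower semicontinuity.} Here I would test the collision rate equation against $\phi\in C_b(\R^d)$ via Remark \ref{rem:generaltest}. Since $|\bar\nabla\phi|\le 4\|\phi\|_\infty$ and $|\bar\nabla\phi|^2$ is invariant under $T$, Lemma \ref{lem:integrability}, Assumption \ref{ass:standing} and Cauchy--Schwarz give, for every $(\mu,\cU)\in\CE_1^*(\mu_0,\mu_1)$,
\begin{align*}
  \Big|\int\phi\,\dd\mu_1-\int\phi\,\dd\mu_0\Big|~=~\frac14\Big|\int_0^1\!\!\int\bar\nabla\phi\,\dd\cU_t\,\dd t\Big|~\le~C\|\phi\|_\infty\sqrt{C_B}\Big(\int_0^1\cA(\mu_t,\cU_t)\,\dd t\Big)^{1/2}\;,
\end{align*}
and minimising over curves gives $|\int\phi\,\dd\mu_1-\int\phi\,\dd\mu_0|\le C\|\phi\|_\infty\sqrt{C_B}\,\cW_B(\mu_0,\mu_1)$. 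Hence $\cW_B$-convergence implies weak convergence, so the $\cW_B$-topology is finer than the weak one (and incidentally $\cW_B$ separates points, so it is a genuine extended metric), and since $\cP_*(\R^d)$ consists of measures of unit second moment, every $\cW_B$-bounded set is tight, hence weakly sequentially precompact. For lower semicontinuity, suppose $\mu^n_i\rightharpoonup\mu_i$ in $\cP_*(\R^d)$ for $i=0,1$ with $\liminf_n\cW_B(\mu^n_0,\mu^n_1)=:L<\infty$; passing to a subsequence and choosing near-optimal $(\mu^n,\cU^n)\in\CE_1^*(\mu^n_0,\mu^n_1)$ with $\sup_n\int_0^1\cA(\mu^n_t,\cU^n_t)\,\dd t<\infty$, the identity $\int|v|^2\dd\mu^n_0=1=\int|v|^2\dd\mu_0$ forces $v\mapsto|v|^2$ to be uniformly integrable w.r.t.\ $(\mu^n_0)$, so the last part of Proposition \ref{prop:cecompactness} produces a limit $(\mu,\cU)\in\CE_1^*(\mu_0,\mu_1)$ with $\int_0^1\cA(\mu_t,\cU_t)\,\dd t\le L^2$, i.e.\ $\cW_B(\mu_0,\mu_1)\le L$.

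\textbf{Geodesics and completeness of $(\cP_\tau,\cW_B)$.} Fix $\tau\in\cP_*(\R^d)$. Given $\mu_0,\mu_1\in\cP_\tau$ with $W:=\cW_B(\mu_0,\mu_1)<\infty$, Proposition \ref{prop:minimizers} furnishes $(\mu,\cU)\in\CE_1^*(\mu_0,\mu_1)$ with $\cA(\mu_t,\cU_t)=W^2$ a.e.; time-shifting the restriction of $(\mu,\cU)$ to $[s,t]$ and applying Lemma \ref{lem:equivcharacter} gives $\cW_B(\mu_s,\mu_t)\le|t-s|W$, and the triangle inequality squeezes this to equality, so $(\mu_t)$ is a constant-speed geodesic that stays in $\cP_\tau$. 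For completeness, let $(\mu_n)\subset\cP_\tau$ be $\cW_B$-Cauchy; then $C:=\sup_n\cW_B(\mu_n,\tau)<\infty$, and for each $n$ I choose $(\mu^n,\cU^n)\in\CE_1^*(\tau,\mu_n)$ with $\int_0^1\cA(\mu^n_t,\cU^n_t)\,\dd t\le C^2+1$. The key point is that all these curves emanate from the \emph{same} point, so $v\mapsto|v|^2$ is trivially uniformly integrable w.r.t.\ the constant family $\mu^n_0\equiv\tau$; hence the last part of Proposition \ref{prop:cecompactness} extracts a subsequence converging to some $(\mu,\cU)\in\CE_1^*(\tau,\nu)$ with $\mu_{n_k}\rightharpoonup\nu$, so $\nu\in\cP_*(\R^d)$ (being the endpoint of a $\CE_1^*$-curve) and $\cW_B(\tau,\nu)\le\liminf_k\int_0^1\cA(\mu^{n_k}_t,\cU^{n_k}_t)\,\dd t<\infty$, i.e.\ $\nu\in\cP_\tau$. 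Finally, lower semicontinuity of $\cW_B$ and the Cauchy property give $\cW_B(\mu_{n_k},\nu)\le\liminf_j\cW_B(\mu_{n_k},\mu_{n_j})\to 0$, and therefore $\cW_B(\mu_n,\nu)\to 0$.

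\textbf{Expected main obstacle.} Everything above except completeness is bookkeeping with the action functional (additivity under concatenation, behaviour under time shifts, lower semicontinuity) already packaged in the preceding lemmas. Completeness is the one delicate point, because a naive ``connect consecutive terms'' argument would have to rule out escape of mass or kinetic energy to infinity in the limit; that is exactly the content of the moment-conserving part of Proposition \ref{prop:cecompactness}, and the trick that makes it applicable — connecting every $\mu_n$ to the fixed reference $\tau$, so that its uniform-integrability hypothesis is satisfied for free — is the only step that is not routine.
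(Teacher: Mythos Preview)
Your proof is correct and follows essentially the same route as the paper: symmetry and the triangle inequality via time-reversal and concatenation together with Lemma~\ref{lem:equivcharacter}, lower semicontinuity and compactness from Proposition~\ref{prop:cecompactness}, geodesics from Proposition~\ref{prop:minimizers}, and completeness via weak subsequential limits plus lower semicontinuity of $\cW_B$.

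Two small differences are worth noting. First, you obtain separation of points (and the comparison with the weak topology) directly from the testing estimate $\bigl|\int\phi\,\dd\mu_1-\int\phi\,\dd\mu_0\bigr|\le C\|\phi\|_\infty\cW_B(\mu_0,\mu_1)$, whereas the paper deduces $\mu_0=\mu_1$ from the existence of a minimizing curve with zero action and derives the topology comparison indirectly from compactness plus lower semicontinuity; your argument is the more elementary one (and is the same mechanism the paper uses later for the $W_1$ bound in Proposition~\ref{prop:lowerboundW}). Second, in the completeness step the paper simply extracts a weak limit $\mu^\infty$ and asserts $\mu^\infty\in\cP_*(\R^d)$, while you go through curves from the fixed base point $\tau$ so that the moment-preserving clause of Proposition~\ref{prop:cecompactness} applies with a trivially uniformly integrable family; this makes explicit why the limit retains unit second moment, a point the paper leaves implicit. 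Your assessment that this is ``the only step that is not routine'' is perhaps a slight overstatement---once lower semicontinuity is in hand the paper's shorter argument works too---but the extra care does no harm.
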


Here, we call a function $d:X\times X\to[0,\infty]$ an \emph{extended
  distance} on the set $X$, if it is symmetric, satisfies the triangle
inequality and vanishes precisely on the diagonal.

\begin{proof}
  Symmetry of $\cW_B$ is obvious from the fact that
  $\a(w,\cdot,\cdot)=\a(-w,\cdot,\cdot)$. Equation
  \eqref{eq:cedistributionrefined} shows that two curves in
  $\CE_{1}^{p,E}$ can be concatenated to obtain a curve in
  $\CE_{2}^{p,E}$. Hence the triangle inequality follows easily using
  Lemma \ref{lem:equivcharacter}. To see that $\cW_B(\mu_0,\mu_1)>0$
  whenever $\mu_0\neq\mu_1$ assume that $\cW_B(\mu_0,\mu_1)=0$ and
  choose a minimizing curve $(\mu,\cU)\in\CE_{1}^{p,E}(\mu_0,\mu_1)$. Then
  we must have $\cA(\mu_t,\cU_t)=0$ and hence $\cU_t=0$ for
  a.e. $t\in(0,1)$. From the continuity equation in the form
  \eqref{eq:cedistributionrefined} we infer $\mu_0=\mu_1$.
 
  The compactness assertion and lower semicontinuity of $\cW_B$ follow
  immediately from Proposition \ref{prop:cecompactness}. These in turn
  imply that the topology induced by $\cW_B$ is stronger than the weak
  one.

  Let us now fix $\tau\in\cPpE$ and let $\mu_0,\mu_1\in
  \cP_\tau$. By the triangle inequality we have
  $\cW_B(\mu_0,\mu_1)<\infty$ and hence Proposition
  \ref{prop:minimizers} yields existence of a minimizing curve
  $(\mu,\cU)\in\CE_{1}^{p,E}(\mu_0,\mu_1)$. The curve
  $t\mapsto\mu_t$ is then a constant speed geodesic in $\cP_\tau$
  since it satisfies
  \begin{align*}
    \cW_B(\mu_s,\mu_t)~=~\int\limits_s^
    t\sqrt{\cA(\mu_r,\cU_r)}\dd r~=~(t-s)\cW_B(\mu_0,\mu_1)\quad\forall
    0\leq s\leq t\leq 1\ .
  \end{align*}
  To show completeness, let $(\mu^n)_n$ be a Cauchy sequence in
  $\cP_\tau$. In particular the sequence is bounded w.r.t.~$\cW_B$ and
  we can find a subsequence (still indexed by $n$) and
  $\mu^\infty\in\cPpE$ such that $\mu^n\rightharpoonup\mu^\infty$
  weakly. Invoking lower semicontinuity of $\cW_B$ and the Cauchy
  condition we infer that $\cW_B(\mu^n,\mu^\infty)\to 0$ as $n\to\infty$
  and that $\mu^\infty\in \cP_\tau$.
\end{proof}

It is not yet clear when precisely the distance $\cW_B$ is
finite. However, it is easily seen to be finite along solutions to
the Boltzmann equation: if $f_t$ is a solution according to Theorem
\ref{thm:boltzmann} and we set $\mu_t=f_t\cL$ and
\begin{align*}
   \cU_t=\dgrad\log f_t\Lambda(f_t)\cB = \big[(\fp)_t(\fsp)_t-f_t(\fs)_t\big]\cB\;,
 \end{align*}
then $(\mu,\cU)\in\CE^E$ and we have $\cA(\mu_t,\cU_t)=D(\mu_t)$. Thus,
\begin{align*}
  \cW_B(\mu_0,\mu_T) \leq \int_0^T\sqrt{D(\mu_t)}\dd t\leq \sqrt{T} \left(\int_0^TD(\mu_t)\dd t\right)^{\frac12} = \sqrt{T}\sqrt{\cH(\mu_0)-\cH(\mu_T)}\;.
\end{align*}

The following result shows that the distance $\cW_B$ can be bounded
from below by the $L^1$-Wasserstein distance. Recall that the
$L^1$-Wasserstein distance is defined for $\mu_0,\mu_1\in\cP(\R^d)$ by
\begin{align*}
  W_{1}(\mu_0,\mu_1)~:=~\inf\limits_\pi \int |x-y|\pi(\dd x,\dd y)\ ,
\end{align*}
where the infimum is taken over all probability measures
$\pi\in\cP(\R^d\times\R^d)$ whose first and second marginal are $\mu_0$
and $\mu_1$ respectively.

\begin{proposition}\label{prop:lowerboundW} 
  Let $p\geq 2+\max(\gamma,0)$. For any $\mu_0,\mu_1\in\cPpE$ we have the bound
  \begin{align*}
    W_{1}(\mu_0,\mu_1)~\leq~\sqrt{2c_BE}\cW_B(\mu_0,\mu_1)\ .
  \end{align*}
\end{proposition}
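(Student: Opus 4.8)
The plan is to combine the Kantorovich--Rubinstein duality for $W_{1}$ with the action estimate of Lemma \ref{lem:integrability}, applied along an arbitrary admissible curve in $\CE_1^*(\mu_0,\mu_1)$ and tested against $1$-Lipschitz functions. We may assume $\cW_B(\mu_0,\mu_1)<\infty$, otherwise there is nothing to prove. Fix $\phi:\R^d\to\R$ with $\Lip(\phi)\le 1$; subtracting the constant $\phi(0)$ (which changes neither $\int\phi\,\dd\mu_1-\int\phi\,\dd\mu_0$ nor the Lipschitz constant) we may assume $|\phi(v)|\le|v|$, so that $\phi$ has at most linear growth. Let $(\mu,\cU)\in\CE_1^*(\mu_0,\mu_1)$ be any admissible curve. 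Since $\CE_1^*=\CE_1^{0,1}$ is built with the strong integrability condition $(iii')$ of Definition \ref{def:ce-moments}, Remark \ref{rem:generaltest} applies to the linearly growing function $\phi$ and yields
\[
  \int\phi\,\dd\mu_1-\int\phi\,\dd\mu_0~=~\frac14\int_0^1\int\dgrad\phi\,\dd\cU_t\,\dd t\;.
\]

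The next step is a pointwise bound on $\dgrad\phi$. Writing $\dgrad\phi=\phi(\vp)+\phi(\vsp)-\phi(v)-\phi(\vs)$ and using $\Lip(\phi)\le1$ together with $\vp-v=-\ip{v-\vs,\omega}\omega$ and $\vsp-\vs=\ip{v-\vs,\omega}\omega$ from \eqref{eq:pre-post2}, I get
\[
  |\dgrad\phi|~\le~|\vp-v|+|\vsp-\vs|~=~2\,|\ip{v-\vs,\omega}|~\le~2\,|v-\vs|\;.
\]
Hence it remains to control $\int_0^1\int|v-\vs|\,\dd|\cU_t|\,\dd t$, and here I would invoke Lemma \ref{lem:integrability} with $\Psi(v,\vs,\omega)=|v-\vs|$. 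This $\Psi$ is invariant under the pre/post-collisional change of variables $T$, since conservation of energy in a collision gives $|\vp-\vsp|^2=|v-\vs|^2$; thus $\Psi^2+\Psi^2\circ T=2|v-\vs|^2$. Combining with Assumption \ref{ass:standing} in the form $\int_{S^{d-1}}B(v-\vs,\omega)\,\dd\omega\le C_B$ and the normalization $\int|v-\vs|^2\,\dd\mu_t(v)\,\dd\mu_t(\vs)=2E(\mu_t)-2|m(\mu_t)|^2=2$ valid because $\mu_t\in\cP_*(\R^d)$ for all $t$, the second factor in Lemma \ref{lem:integrability} is at most $\sqrt{2C_B}$, so
\[
  \int|v-\vs|\,\dd|\cU_t|~\le~2\sqrt{2C_B}\,\cA(\mu_t,\cU_t)^{1/2}\;.
\]

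Putting everything together,
\[
  \int\phi\,\dd\mu_1-\int\phi\,\dd\mu_0~\le~\frac12\int_0^1\int|v-\vs|\,\dd|\cU_t|\,\dd t~\le~\sqrt{2C_B}\int_0^1\sqrt{\cA(\mu_t,\cU_t)}\,\dd t\;.
\]
Taking the infimum over all $(\mu,\cU)\in\CE_1^*(\mu_0,\mu_1)$ and applying Lemma \ref{lem:equivcharacter} gives $\int\phi\,\dd\mu_1-\int\phi\,\dd\mu_0\le\sqrt{2C_B}\,\cW_B(\mu_0,\mu_1)$, and finally taking the supremum over $1$-Lipschitz $\phi$, the Kantorovich--Rubinstein duality for $W_{1}$ yields the asserted inequality. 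The argument is essentially routine; the only points that need care are the justification that Remark \ref{rem:generaltest} legitimately permits testing against a function of linear growth — which is precisely where the moment constraint encoded in $\CE_1^*$ enters — and the verification of the $T$-invariance of $\Psi$ and the moment normalization, which are what produce the clean constant $\sqrt{2C_B}$.
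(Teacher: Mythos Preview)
Your proof is correct and follows essentially the same route as the paper's: Kantorovich--Rubinstein duality, the pointwise bound $|\dgrad\phi|\le 2|v-\vs|$, Lemma~\ref{lem:integrability} with $\Psi=|v-\vs|$ (using its $T$-invariance), and the moment normalization $\int|v-\vs|^2\dd\mu_t\dd\mu_t=2$. The only cosmetic differences are that the paper works with \emph{bounded} $1$-Lipschitz test functions (so only condition (iii) is needed in Remark~\ref{rem:generaltest}) and with a \emph{minimizing} curve, applying Cauchy--Schwarz in $t$ to identify $\big(\int_0^1\cA\,\dd t\big)^{1/2}=\cW_B$ directly, whereas you allow linear growth, take an arbitrary admissible curve, and invoke Lemma~\ref{lem:equivcharacter} at the end.
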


\begin{proof}
  We can assume that $\cW_B(\mu_0,\mu_1)<\infty$. Take a minimizing
  curve $(\mu,\cU)\in\CE^{p,E}_1(\mu_0,\mu_1)$ and let $\phi:\R^d\to\R$ be
  a bounded $1$-Lipschitz function. This implies that $|\dgrad
  \phi|\leq 2|v-\vs|$. Taking into account Remark \ref{rem:generaltest}
  and using Lemma \ref{lem:integrability}, we estimate
  \begin{align*}
    &\abs{\int\varphi \dd\mu_1-\int\varphi \dd\mu_0}~=~\frac14\abs{\int_0^1\int\dgrad\varphi \dd\cU_t\dd t}\\
    &\leq~\frac12\int_0^1\int |v-\vs|\dd\abs{\cU_t}(v,\vs,\omega)\dd t\\
    &\leq~\left(\int_0^1\cA(\mu_t,\cU_t)\dd t\right)^\frac12\left(\int_0^1\int |v|^2+|\vs|^2B(v-\vs,\omega)\mu_t(\dd v)\mu_t(\dd \vs)\dd t\right)^\frac12\\
    &\leq~\sqrt{2c_BE}\cW_B(\mu_0,\mu_1)\ .
  \end{align*}
  Here we have also used \eqref{eq:ass-standing} and the fact that
  $\mu_t$ has $p$-moment less than $E$ in the last inequality. Taking the
  supremum over all bounded $1$-Lipschitz functions $\phi$ yields the
  claim by Kantorovich--Rubinstein duality (see \cite[Thm. 5.10,
  5.16]{Vil09}).
\end{proof}

We now give a characterization of absolutely continuous curves with
respect to $\cW_B$. See \eqref{eq:abs-continuous} and
\eqref{eq:def-md} for the definition of absolutely continuous curves
and their metric derivative.

\begin{proposition}[Metric velocity]\label{prop:metricderivative}
  A curve $(\mu_t)_{t\in[0,T]}$ in $\cPpE$ is absolutely
  continuous with respect to $\cW_B$ if and only if there exists a
  Borel family $(\cU_t)_{t\in[0,T]}$ such that $(\mu,\cU)\in\CE^{p,E}_{T}$
  and
  \begin{align*}
    \int_0^T\sqrt{\cA(\mu_t,\cU_t)}\dd t~<~\infty\ .
  \end{align*}
  In this case, the metric derivative is bounded as $\abs{\dot \mu}^2(t)\leq\cA(\mu_t,\cU_t)$ for
  a.e. $t\in[0,T]$. Moreover, there exists a unique Borel family
  $\tilde{\cU}_t$ with $(\mu,\tilde\cU)\in\CE_{T}^{p,E}$ such that
  \begin{align}\label{eq:optimalvelocity}
    \abs{\dot \mu}^2(t)=\cA(\mu_t,\tilde{\cU}_t)\qquad \text{for a.e. }t\in[0,T]\ .
  \end{align}
\end{proposition}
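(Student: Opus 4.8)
The plan is to follow the now-standard argument for characterizing absolutely continuous curves in Benamou--Brenier-type distances, as carried out e.g.\ in \cite[Thm.~8.3.1]{AGS08} or \cite[Thm.~5.17]{DNS09}, adapted to the collision-rate setting.

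\textbf{The ``only if'' direction.} Suppose $(\mu_t)$ is absolutely continuous w.r.t.\ $\cW_B$. First I would produce, for each pair $s<t$ with $\cW_B(\mu_s,\mu_t)<\infty$, a nearly optimal connecting pair via Proposition \ref{prop:minimizers} and Lemma \ref{lem:equivcharacter}. The idea is to build a global $(\mu,\cU)\in\CE_T^*$ whose action integral reconstructs $|\dot\mu|$. Concretely, for a partition of $[0,T]$ into intervals $I_k=[t_{k-1},t_k]$, glue geodesics joining $\mu_{t_{k-1}}$ to $\mu_{t_k}$ (reparametrized to run over $I_k$), using the concatenation property noted in the proof of Theorem \ref{thm:distance} and the scaling invariance of $\CE^*$ established implicitly in Lemma \ref{lem:ce-OU-invariance}'s proof. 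This yields a curve $(\mu^n,\cU^n)\in\CE_T^*$ agreeing with $\mu$ at the partition points and satisfying $\int_0^T\cA(\mu^n_t,\cU^n_t)\,\dd t=\sum_k\cW_B(\mu_{t_{k-1}},\mu_{t_k})^2/|I_k|\le\sum_k\big(\int_{I_k}|\dot\mu|\big)^2/|I_k|$, which by Jensen stays bounded by $\int_0^T|\dot\mu|^2$. Refining the partitions and applying Proposition \ref{prop:cecompactness} extracts a subsequential limit $(\mu,\tilde\cU)\in\CE_T^*$ (the limit curve of measures is $\mu$ itself since it agrees with $\mu$ on a dense set of times and both are weakly continuous), and lower semicontinuity of the action integral gives $\int_0^T\cA(\mu_t,\tilde\cU_t)\,\dd t\le\int_0^T|\dot\mu|^2(t)\,\dd t$, hence finiteness of $\int_0^T\sqrt{\cA}\,\dd t$ by Cauchy--Schwarz.

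\textbf{The ``if'' direction and the speed bound.} Conversely, given $(\mu,\cU)\in\CE_T^*$ with $\int_0^T\sqrt{\cA(\mu_t,\cU_t)}\,\dd t<\infty$, the restriction of $(\mu,\cU)$ to any subinterval $[s,t]$, reparametrized to $[0,1]$, is admissible in \eqref{eq:defmetric}, so Lemma \ref{lem:equivcharacter} gives $\cW_B(\mu_s,\mu_t)\le\int_s^t\sqrt{\cA(\mu_r,\cU_r)}\,\dd r$. This shows $(\mu_t)$ is absolutely continuous with $|\dot\mu|(t)\le\sqrt{\cA(\mu_t,\cU_t)}$ for a.e.\ $t$, by definition of the metric derivative as the minimal $L^1$ function dominating the increments \eqref{eq:abs-continuous}.

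\textbf{Optimal velocity and uniqueness.} For the last claim, among all Borel families $\cU$ with $(\mu,\cU)\in\CE_T^*$ one minimizes $\int_0^T\cA(\mu_t,\cU_t)\,\dd t$; combining the two bounds just obtained shows the infimum equals $\int_0^T|\dot\mu|^2(t)\,\dd t$ and, via a measurable-selection / disintegration argument, that it is attained by some $\tilde\cU$ with $\cA(\mu_t,\tilde\cU_t)=|\dot\mu|^2(t)$ a.e.\ (this uses that for fixed $\mu_t$ the constraint ``$(\mu,\cU)$ solves \eqref{eq:ce}'' is linear in $\cU$ while $\cU\mapsto\cA(\mu_t,\cU_t)$ is convex and lower semicontinuous, so pointwise minimization is compatible with the global constraint after passing through a minimizing sequence and using Proposition \ref{prop:cecompactness}). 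Uniqueness of $\tilde\cU$ follows from strict convexity of $\alpha(\cdot,s,t)$ on its finiteness domain: if $\tilde\cU$ and $\tilde\cU'$ were both optimal, then by convexity so is their average $(\tilde\cU+\tilde\cU')/2$, which still solves \eqref{eq:ce}, and strict convexity of $u\mapsto u^2/(4\Lambda)$ forces the densities (w.r.t.\ a common reference measure) to coincide a.e.

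\textbf{Main obstacle.} The delicate point is the gluing step in the ``only if'' direction: one must verify that geodesics on subintervals can be concatenated \emph{and} time-rescaled while staying inside the constrained class $\CE_T^*$ (in particular preserving the moment conditions (v) of Definition \ref{def:ce-moments} and the integrability (iii')), and that the resulting sequence $(\mu^n,\cU^n)$ satisfies the hypotheses of Proposition \ref{prop:cecompactness} (tightness of $\mu^n_0=\mu_0$ is immediate, uniform integrability of $|v|^2$ w.r.t.\ $\mu^n_0$ is free since it is the fixed measure $\mu_0$, and the action bound is the Jensen estimate above). Once the compactness input is secured, identifying the limit curve with $\mu$ requires knowing that weak continuity plus agreement on a dense set of times pins down the curve, which is exactly Lemma \ref{lem:cont-represent} together with Remark \ref{rem:weak*-vs-weak}.
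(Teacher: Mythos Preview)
Your proposal is correct and is precisely the approach the paper has in mind: the paper's own proof consists of a single sentence referring to \cite[Thm.~5.17]{DNS09}, which is exactly the template you spell out. The only minor point worth noting is that in the ``only if'' direction your Jensen estimate bounds $\int_0^T\cA$ by $\int_0^T|\dot\mu|^2$, which is a priori finite only for $2$-absolutely continuous curves; the general $AC$ case is handled (as in \cite{DNS09}) by first passing to an arclength reparametrization so that $|\dot\mu|$ is bounded, and then transferring back.
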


\begin{proof}
  The proof follows from the very same arguments as in
  \cite[Thm. 5.17]{DNS09}.
\end{proof}

We can describe the optimal velocity measures $\tilde\cU_t$ appearing
in the preceding proposition in more detail.  We define
$T_\mu$ to be the set of all $\cU\in\cM(\Omega)$ such that
$\cA(\mu,\cU)<\infty$ and $\cA(\mu,\cU)~\leq~\cA(\mu,\cU+\bbeta)$ for
all $\bbeta\in \cM(\Omega)$ satisfying
\begin{align*}
\frac14\int_\Omega\dgrad\xi\dd\bbeta~=~0\qquad\forall\xi\in C^\infty_c(\Omega)\ .
\end{align*}

\begin{corollary}\label{cor:tangentspace}
  Let $(\mu,\cU)\in\CE_{T}^{p,E}$ such that the curve $t\mapsto\mu_t$ is
  absolutely continuous w.r.t. $\cW_B$. Then $\cU$ satisfies
  \eqref{eq:optimalvelocity} if and only if $\cU_t\in
  T_{\mu_t}$ for a.e. $t\in[0,T]$.
\end{corollary}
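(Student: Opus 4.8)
The plan is to reduce everything to two facts supplied by Proposition~\ref{prop:metricderivative}: the pointwise bound $|\dot\mu|^2(t)\le\cA(\mu_t,\cV_t)$ for a.e.~$t$, valid for every $(\mu,\cV)\in\CE_T^*$, and the \emph{uniqueness} of the Borel family $\tilde\cU$ with $(\mu,\tilde\cU)\in\CE_T^*$ and $\cA(\mu_t,\tilde\cU_t)=|\dot\mu|^2(t)$ a.e. The bridge between~\eqref{eq:optimalvelocity} and membership in $T_{\mu_t}$ is the observation that two solutions of the collision rate equation with the \emph{same} curve $\mu$ differ by a divergence-free collision rate: if $(\mu,\cU),(\mu,\cV)\in\CE_T^*$ then $\bbeta_t:=\cV_t-\cU_t$ satisfies $\frac14\int_G\dgrad\xi\,\dd\bbeta_t=0$ for a.e.~$t$ and every test function $\xi$ occurring in the definition of $T_{\mu_t}$. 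I would prove this by subtracting the distributional forms~\eqref{eq:cedistribution} of the two equations tested against $\eta(t)\xi(v)$, obtaining $\int_0^T\eta(t)\int_G\dgrad\xi\,\dd\bbeta_t\,\dd t=0$, then letting $\xi$ range over a countable family that is dense in the $C^1$-norm with supports exhausting $\R^d$ and passing to the limit using $|\dgrad\xi|\le 4\|\xi\|_\infty$ together with $\int_0^T|\bbeta_t|(G)\,\dd t<\infty$.

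For the ``if'' direction I would argue as follows. Assume $\cU_t\in T_{\mu_t}$ for a.e.~$t$, so that $\cA(\mu_t,\cU_t)<\infty$ a.e. Applying the bridge observation with $\cV=\tilde\cU$, the difference $\bbeta_t=\tilde\cU_t-\cU_t$ is, for a.e.~$t$, an admissible competitor in the defining inequality for $T_{\mu_t}$, hence $\cA(\mu_t,\cU_t)\le\cA(\mu_t,\cU_t+\bbeta_t)=\cA(\mu_t,\tilde\cU_t)=|\dot\mu|^2(t)$ a.e. Combined with the reverse inequality $|\dot\mu|^2(t)\le\cA(\mu_t,\cU_t)$ from Proposition~\ref{prop:metricderivative}, this forces equality, i.e.~\eqref{eq:optimalvelocity}.

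For the ``only if'' direction, \eqref{eq:optimalvelocity} together with the uniqueness statement of Proposition~\ref{prop:metricderivative} gives $\cU=\tilde\cU$ as Borel families, so it suffices to prove $\tilde\cU_t\in T_{\mu_t}$ for a.e.~$t$. I would construct the tangential projection of $\tilde\cU$: for a.e.~$t$, let $\hat\cU_t$ be the minimiser of $\cV\mapsto\cA(\mu_t,\cV)$ over the affine set $\tilde\cU_t+\{\bbeta:\text{the }T_{\mu_t}\text{-constraint holds}\}$. For fixed $t$ this minimiser exists (and is unique, by strict convexity of $\cA(\mu_t,\cdot)$) via the direct method: a minimising sequence is total-variation bounded by Lemma~\ref{lem:integrability} with $\Psi\equiv1$ (using $\int B\,\dd\mu_t\dd\mu_t\le C_B$), the divergence-free constraint survives the weak$^*$ limit by the compactness argument in the proof of Proposition~\ref{prop:cecompactness} (which here uses only the second moment of $\mu_t$), and $\cA(\mu_t,\cdot)$ is lower semicontinuous by Lemma~\ref{lem:lscaction}; a measurable selection then makes $t\mapsto\hat\cU_t$ a Borel family, exactly as in \cite[\S5]{DNS09} and \cite[\S8.4]{AGS08}. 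By construction $\hat\cU_t\in T_{\mu_t}$ for a.e.~$t$, and $\hat\cU_t-\tilde\cU_t$ is a divergence-free family, so $(\mu,\hat\cU)$ again solves the collision rate equation; the integrability conditions of $\CE_T^*$ are inherited from $\cA(\mu_t,\hat\cU_t)\le\cA(\mu_t,\tilde\cU_t)=|\dot\mu|^2(t)\in L^1(0,T)$ through Lemma~\ref{lem:integrability} and the moment bounds on $\mu_t$. Thus $(\mu,\hat\cU)\in\CE_T^*$, and the already-proved ``if'' direction yields $\cA(\mu_t,\hat\cU_t)=|\dot\mu|^2(t)$ a.e.; by uniqueness $\hat\cU=\tilde\cU$, so $\tilde\cU_t=\hat\cU_t\in T_{\mu_t}$ for a.e.~$t$.

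The routine pieces are the density/approximation step in the bridge observation and the inheritance of the integrability conditions for $\hat\cU$. The main obstacle is the last step: producing the action-minimiser over the divergence-free coset with \emph{measurable} dependence on $t$ and checking that it still defines an element of $\CE_T^*$. This is precisely where the compactness machinery of Proposition~\ref{prop:cecompactness} and the measurable-selection arguments of \cite{DNS09,AGS08} are needed; everything else is elementary once that construction is in hand.
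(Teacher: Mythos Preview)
Your proposal is correct and is precisely the standard argument underlying this result; the paper itself states the corollary without proof, treating it as a direct consequence of Proposition~\ref{prop:metricderivative} (whose own proof is delegated to \cite[Thm.~5.17]{DNS09}). Your write-up spells out exactly the argument one finds there: the ``if'' direction via comparison with the unique optimal $\tilde\cU$ through the divergence-free difference, and the ``only if'' direction via the tangential projection $\hat\cU$ plus uniqueness, with the measurable-selection step correctly flagged as the one place requiring the compactness machinery.
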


If $\mu$ is absolutely continuous with respect to Lebesgue measure
$\cL$ we can give an explicit description of $T_\mu$. Recall that
$\cB\in\cM(\Omega)$ is the measure given by
$\dd\cB(v,\vs,\omega)=B(v-\vs,\omega)\dd v\dd\vs\dd\omega$.

\begin{proposition}\label{prop:tangentspace-density} 
  Let $\mu=f m\in\cPpE$. Then we have $\cU\in T_\mu$
  if and only if $\cU=U\Lambda(f) \cB$ is absolutely continuous
  w.r.t.~the measure $\Lambda(f)\cB$ and
  \begin{align*}
    U~\in~ \overline{\{\dgrad\phi\ \vert\ \phi\in C^\infty_c(\R^d)\}}^{L^2(\Lambda(f)\cB)}~=:~T_f\ .
  \end{align*}
\end{proposition}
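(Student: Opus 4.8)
The plan is to exploit Lemma~\ref{lem:densities}: once $\mu$ has a Lebesgue density $f$, the action $\cA(\mu,\cdot)$ is, up to the factor $\frac14$, the squared norm of the Hilbert space $H:=L^2(\Lambda(f)\cB)$, so that the variational condition defining $T_\mu$ turns into an orthogonality statement in $H$.

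First I would record that $\Lambda(f)\cB$ is a \emph{finite} measure: by \eqref{eq:log-arith-ineq} and \eqref{eq:defmu12}, $\int_G\Lambda(f)\dd\cB\le\frac12\mu^1(G)+\frac12\mu^2(G)=\mu^1(G)\le C_B$, using $\mu^2=T_\#\mu^1$ and Assumption~\ref{ass:standing}. Hence every bounded measurable function---in particular each $\dgrad\phi$ with $\phi\in C^\infty_c(\R^d)$---belongs to $H$, and every $\eta\in H$ defines, through $\eta\Lambda(f)\cB$, a genuine finite signed measure in $\cM(G)$ (Cauchy--Schwarz). Combined with Lemma~\ref{lem:densities} this says: $\cV\in\cM(G)$ has $\cA(\mu,\cV)<\infty$ iff $\cV=V\Lambda(f)\cB$ with $V\in H$, and then $\cA(\mu,\cV)=\frac14\norm{V}_H^2$.

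Next I would translate the side condition appearing in the definition of $T_\mu$. For $\bbeta=\eta\Lambda(f)\cB$ with $\eta\in H$, requiring $\frac14\int\dgrad\xi\dd\bbeta=0$ for all $\xi\in C^\infty_c(\R^d)$ is the same as $\langle\dgrad\xi,\eta\rangle_H=0$ for all such $\xi$, i.e.\ $\eta\in T_f^{\perp}$ (with $T_f$ the closed subspace from the statement), and conversely every $\eta\in T_f^{\perp}$ gives back an admissible $\bbeta$. Since a candidate $\cU\in T_\mu$ must have finite action, hence be of the form $U\Lambda(f)\cB$ with $U\in H$ (which already yields the asserted absolute continuity), and since the inequality $\cA(\mu,\cU)\le\cA(\mu,\cU+\bbeta)$ is vacuous unless $\cA(\mu,\cU+\bbeta)<\infty$, the only perturbations that matter are exactly $\bbeta=\eta\Lambda(f)\cB$ with $\eta\in T_f^{\perp}$. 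The condition then becomes $\norm{U}_H^2\le\norm{U+\eta}_H^2$ for all $\eta\in T_f^{\perp}$; testing with $t\eta$ and letting $t\to 0^{\pm}$, this is equivalent to $\langle U,\eta\rangle_H=0$ for all $\eta\in T_f^{\perp}$, i.e.\ $U\in(T_f^{\perp})^{\perp}=T_f$, which is the claim. The converse inclusion is the same expansion read backwards: for $U\in T_f$ and admissible $\bbeta=\eta\Lambda(f)\cB$ one has $\norm{U+\eta}_H^2=\norm{U}_H^2+\norm{\eta}_H^2\ge\norm{U}_H^2$, so $\cU\in T_\mu$.

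I do not expect a genuine obstacle here; it is a routine Hilbert-space projection argument. The only points demanding a little care are the finiteness of $\Lambda(f)\cB$ (so that $H$ contains the test objects $\dgrad\xi$ and so that $L^2$-densities integrate to honest finite measures) and the observation that one may restrict attention to finite-action competitors, which makes the reduction to $T_f^{\perp}$ exact rather than merely one-sided.
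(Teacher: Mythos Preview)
Your proof is correct and follows essentially the same route as the paper: use Lemma~\ref{lem:densities} to identify finite-action measures with $L^2(\Lambda(f)\cB)$ elements, translate the admissibility condition on $\bbeta$ into membership of the orthogonal complement $N_f=T_f^{\perp}$, and recognize the minimality condition as the Hilbert-space characterization of the orthogonal projection onto $T_f$. Your write-up is in fact more careful than the paper's, which glosses over the finiteness of $\Lambda(f)\cB$ and the reduction to finite-action competitors.
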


\begin{proof}
  If $\cA(\mu,\cU)$ is finite we infer from Lemma \ref{lem:densities}
  that $\cU=U\Lambda(f)\cB$ for some density $U:\Omega\to\R$ and that
  $\cA(\mu,\cU)=\norm{U}^2_{L^2(\Lambda(f)\cB)}$. Now the
  optimality condition in the definition of $T_\mu$ is equivalent
  to
  \begin{align*}
    \norm{U}_{L^2(\Lambda(f)\cB)}~\leq~\norm{U+V}_{L^2(\Lambda(f)\cB)}\qquad \forall V\in N_f\ ,
  \end{align*}
  where $N_f:=\{V\in L^2(\Lambda(f)\cB)\;:\;\int\dgrad\xi
  V\Lambda(f)\cB = 0\ \forall \xi\in C^\infty_c(\R^d)\}$. This
  implies the assertion of the proposition after noting that $N_f$
  is the orthogonal complement in $L^2$ of $T_f$.
\end{proof}

In the light of the formal Riemannian interpretation of the distance
$\cW_B$ one should view $T_\mu$ as the tangent space at the measure
$\mu$. This is reminiscent of Otto's Riemannian interpretation of the
$L^2$-Wasserstein space \cite{O01}.

\section{Metric gradient flow}
\label{sec:metricgf}
In this section, we recast the variational characterization of Section
\ref{sec:gradflow} in the language of the theory of gradient flows in
metric spaces. Let us briefly recall the basic theory of gradient flow
in metric spaces. For a detailed account we refer the reader to
\cite{AGS08}.

Let $(X,d)$ be a complete metric space and let
$E:X\to(-\infty,\infty]$ be a function with proper domain, i.e.~the
set $D(E):=\{x:E(x)<\infty\}$ is non-empty.

A curve $(x_t)_{t\in (a,b)}$ in $(X,d)$ is called $p$-absolutely continuous for
$p\geq1$ if there exists $m\in L^p((a,b))$ such that
\begin{align}\label{eq:abs-continuous}
  d(x_s,x_t)~\leq~\int_s^tm(r)\dd r \quad\forall~a\leq s\leq t\leq
  b\ .
\end{align}
In this case we write $x\in AC^p\big((a,b);(X,d)\big)$. For $p=1$ we
simply drop $p$ in the notation. Similarly, one defines locally
$p$-absolutely continuous curves. For a locally absolutely continuous
curve the metric derivative defined by
\begin{align}\label{eq:def-md}
  \abs{\dot x}(t)~:=~\lim\limits_{h\to0}\frac{d(x_{t+h},x_t)}{\abs{h}}
\end{align}
exists for a.e.~$t$ and is the minimal $m$ in
\eqref{eq:abs-continuous}, see \cite[Thm.1.1.2]{AGS08}.

The following notion plays the role of the modulus of the gradient in
a metric setting.

\begin{definition}[Strong upper gradient]\label{def:upper-grad}
  A function $g:X\to[0,\infty]$ is called a \emph{strong upper
    gradient} of $E$ if for any $x\in AC\big((a,b);(X,d)\big)$ the
  function $g\circ x$ is Borel and
  \begin{align*}
 |E(x_s)-E(x_t)|~\leq~\int_s^tg(x_r)|\dot x|(r)\dd r \quad\forall~a\leq s\leq t\leq
  b\ .
 \end{align*}
\end{definition}

Note that by the definition of strong upper gradient, and Young's
inequality $ab\leq \frac12(a^2+b^2)$, we have that for all $s\leq t$:
\begin{align*}
    E(x_t) - E(x_s) +\frac12 \int_s^t g(x_r)^2 + |\dot x|^2(r)\dd r\geq 0\;.
\end{align*}

\begin{definition}[Curve of maximal slope]\label{def:curve-max-slope}
  A locally $2$-absolutely continuous curve $(x_t)_{t\in(0,\infty)}$ is
  called a curve of maximal slope of $E$ w.r.t.~its strong upper
  gradient $g$ if $t\mapsto E(x_t)$ is non-increasing and
  \begin{align}\label{eq:cms}
    E(x_t) - E(x_s) +\frac12 \int_s^t g(x_r)^2 + |\dot x|^2(r)\dd r \leq 0 \quad\forall~0< s\leq t\;.
  \end{align}
  We say that a curve of maximal slope starts from $x_0\in X$ if $\lim_{t\searrow 0}x_t=x_0$.
\end{definition}

Equivalently, we can require equality in \eqref{eq:cms}. If a strong
upper gradient $g$ of $E$ is fixed we also call a curve of maximal
slope of $E$ (relative to $g$) a \emph{gradient flow curve}.

Finally, we define the (descending) metric slope of $E$ as the
function $|\partial E|:D(E)\to[0,\infty]$ given by
\begin{align}\label{eq:metric-slope-def}
  |\partial E| (x) = \limsup_{y\to x}\frac{\max\{E(x)-E(y),0\}}{d(x,y)}\;.
\end{align}

The metric slope is in general only a weak upper gradient $E$, see
\cite[Thm.~1.2.5]{AGS08}.  In our application to the homogeneous
Boltzmann equation, we will show that the square root of the
dissipation $D$ provides a strong upper gradient for the entropy
$\cH$.

Let us assume that $p\geq 2+\max(\gamma,0)$ and that the collision
kernel $B$ satisfies Assumption \ref{ass:kernel-bounded}. Then we have
the following

\begin{corollary}[Boltzmann equation as curve of maximal slope]\label{cor:sug-dissipation}
  $\sqrt{D}$ is a strong upper gradient for $\cH$ on
  $(\cPpE,\cW_B)$. Moreover, for any $\mu_0\in\cPpE$ with
  $\cH(\mu_0)<\infty$, the curves of maximal slope of $\cH$ w.r.t.~the
  strong upper gradient $\sqrt{D}$ starting from $\mu_0$ are precisely
  the solutions to the Boltzmann equation satisfying
  \eqref{eq:dissipation-integrable}.
\end{corollary}

\begin{proof}
  Let $(\mu_r)_r$ be an absolutely continuous curve such that
  $\int_s^t\sqrt{D(\mu_r)}|\dot \mu|(r)\dd r<\infty$. This implies
  that $\mu_r$ has a density $f_r$ (and hence by Lemma
  \ref{lem:densities} $\cU_r$ has a density $U_r$) for a.e.~$r$. We
  can also assume that one of the measures $\mu_s,\mu_t$ has finite
  entropy, say $\mu_s$. Then, Proposition \ref{prop:chainrule}
  together with the estimate \eqref{eq:bound-DA} yield immediately
  that $\sqrt{D}$ is a strong upper gradient. Theorem \ref{thm:EDI}
  gives the identification of curves of maximal slope.
\end{proof}

\section{Variational approximation scheme}
\label{sec:minmov}

In this section, we consider a time-discrete variational approximation
scheme  for the homogeneous Boltzmann equation. Recall that we make
Assumption \ref{ass:kernel-bounded} on the collision kernel $B$ and let $p\geq 2+\max(\gamma,0)$. The
scheme can be interpreted as the implicit Euler scheme for the
gradient flow equation. Given a time step $\tau>0$ and an initial
datum $\mu_0\in \cPpE$ with $\cH(\mu_0)<\infty$, we consider a
sequence $(\mu^\tau_n)_n$ in $\cPpE$ defined recursively via

\begin{align}\label{eq:min-move}
  \mu^\tau_0 = \mu_0\;,\quad \mu^\tau_n \in \underset{\nu}{\argmin} \Big[\cH(\nu)
  + \frac{1}{2\tau}\cW_B(\nu,\mu^\tau_{n-1})^2\Big]\;.
\end{align}

Then we build a discrete gradient flow trajectory as the piece-wise
constant interpolation $(\bar\mu^\tau_t)_{t\geq 0}$ given by
\begin{align}\label{eq:interpolation}
  \bar\mu^\tau_0 = \mu_0\;, \quad \bar\mu^\tau_t = \mu^\tau_n\ \text{if } t\in\big((n-1)\tau,n\tau]\;. 
\end{align}

Then we have the following result.

\begin{theorem}\label{thm:JKO}
  For any $\tau>0$ and $\mu_0\in \cPpE$ with $\cH(\mu_0)<\infty$
  the variational scheme \eqref{eq:min-move} admits a solution
  $(\mu^\tau_n)_n$. As $\tau\to0$, for any family of discrete
  solutions there exists a sequence $\tau_k\to0$ and a locally
  $2$-absolutely continuous curve $(\mu_t)_{t\geq0}$ such that
  \begin{align}\label{eq:scheme-limit}
    \bar\mu^{\tau_k}_t \rightharpoonup \mu_t\quad \forall
    t\in[0,\infty)\;.
  \end{align}
  Moreover, any such limit curve is a gradient flow of the entropy,
  i.e.~ a solution to the Boltzmann equation satisfying
  \eqref{eq:dissipation-integrable}.
\end{theorem}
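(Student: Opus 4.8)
The plan is to follow the standard minimizing-movement machinery (as in \cite{AGS08}, Chapter 2-3) adapted to the metric space $(\cP_\tau,\cW_B)$, using the ingredients already assembled: lower semicontinuity of $\cH$ and of $\cW_B$ (Theorem \ref{thm:distance}), weak compactness of $\cW_B$-bounded sets, the chain rule (Proposition \ref{prop:chainrule}), the fact that $\sqrt D$ is a strong upper gradient (Corollary \ref{cor:sug-dissipation}), and the variational characterization of solutions as curves of maximal slope (Theorem \ref{thm:EDI}). First I would establish \textbf{existence of minimizers} in \eqref{eq:min-move}: for fixed $\tau$ and fixed $\mu^\tau_{n-1}$, the functional $\nu\mapsto \cH(\nu)+\frac{1}{2\tau}\cW_B(\nu,\mu^\tau_{n-1})^2$ is bounded below by \eqref{eq:ent-lowerbound}, and any minimizing sequence has bounded $\cW_B$-distance to $\mu^\tau_{n-1}$, hence is weakly precompact by Theorem \ref{thm:distance}; lower semicontinuity of both $\cH$ and $\cW_B(\cdot,\mu^\tau_{n-1})$ under weak convergence then gives a minimizer, which automatically lies in $\cP_\tau$.

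Next I would derive the \textbf{discrete energy-dissipation estimate}. Comparing the minimizer $\mu^\tau_n$ with the competitor $\mu^\tau_{n-1}$ yields $\cH(\mu^\tau_n)+\frac{1}{2\tau}\cW_B(\mu^\tau_n,\mu^\tau_{n-1})^2\le \cH(\mu^\tau_{n-1})$; summing over $n$ gives, for the piecewise-constant interpolant together with the piecewise-geodesic (De Giorgi) interpolant, a uniform bound
\begin{align*}
  \cH(\bar\mu^\tau_t)+\frac12\int_0^t |\dot{\hat\mu}^\tau|^2(r)\,\dd r \le \cH(\mu_0)\;,
\end{align*}
where $\hat\mu^\tau$ denotes the geodesic interpolation and $|\dot{\hat\mu}^\tau|$ its metric speed. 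In particular the interpolants are equibounded in $\cW_B$ on compact time intervals and have equibounded metric speeds in $L^2_{\mathrm{loc}}$. By a refined Ascoli-type argument (using that $\cW_B$-bounded sets are weakly compact, exactly as in \cite[Prop.~3.3.1]{AGS08}), a subsequence $\tau_k\to0$ of either interpolant converges weakly at every $t$ to a common limit curve $(\mu_t)_{t\ge0}$, which is locally $2$-absolutely continuous w.r.t.~$\cW_B$ with $|\dot\mu|^2\le \liminf |\dot{\hat\mu}^{\tau_k}|^2$ in the integrated sense.

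Then I would pass to the limit in the discrete estimate. To recover the dissipation term $D$ one needs the standard "De Giorgi variational interpolation" identity: at each step the minimizer satisfies a discrete slope bound $|\partial\cH|(\mu^\tau_n)\le \cW_B(\mu^\tau_n,\mu^\tau_{n-1})/\tau$ (the discrete Euler-Lagrange inequality), which after summation produces $\frac12\int_0^t |\partial\cH|^2(\hat\mu^\tau_r)\,\dd r$ on the left as well. Using the lower semicontinuity of the slope $|\partial\cH|$ along with the fact that $|\partial\cH|\ge \sqrt{D}$ (which follows from Corollary \ref{cor:sug-dissipation}, since $\sqrt D$ is a strong upper gradient, it dominates the relaxed slope, and here in fact the two coincide by the chain-rule computation $|\nabla\cH|^2_f = D(f)$), one obtains in the limit
\begin{align*}
  \cH(\mu_T)+\frac12\int_0^T D(\mu_r) + |\dot\mu|^2(r)\,\dd r \le \cH(\mu_0)\;,
\end{align*}
i.e.~$J_T(\mu)\le 0$. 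Combined with $J_T(\mu)\ge0$ from Theorem \ref{thm:EDI}, this forces $J_T(\mu)=0$, so $(\mu_t)$ is a curve of maximal slope, hence by Theorem \ref{thm:EDI} a weak solution to the Boltzmann equation satisfying \eqref{eq:dissipation-integrable}.

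The \textbf{main obstacle} is the lower-semicontinuity passage for the dissipation term: one must show $\liminf_k \int_0^T D(\bar\mu^{\tau_k}_r)\,\dd r \ge \int_0^T D(\mu_r)\,\dd r$ (equivalently, control the discrete slopes from below by $\sqrt D$ at the limit). The clean way is to route everything through the metric slope $|\partial\cH|$, proving it is weakly lower semicontinuous on $(\cP_*(\R^d),\cW_B)$ and identifying it with $\sqrt D$; the identification $|\partial\cH|\ge\sqrt D$ is the content of the strong-upper-gradient property already proved, while the reverse inequality can be extracted from the chain rule \eqref{eq:chainrule} together with the explicit description of tangent vectors in Proposition \ref{prop:tangentspace-density} (taking $U=\dgrad\log f$ as competitor). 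Lower semicontinuity of $|\partial\cH|$ itself follows from its representation as a supremum/relaxation; alternatively one invokes \cite[Cor.~2.4.10, Thm.~2.3.3]{AGS08} directly, since all the abstract hypotheses (lower semicontinuity of $\cH$, the strong upper gradient property, compactness of sublevels intersected with $\cW_B$-balls) have been verified above.
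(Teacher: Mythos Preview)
Your overall strategy is the same as the paper's: verify the abstract hypotheses of \cite[Sec.~2]{AGS08} and apply the minimizing-movement machinery. The existence of minimizers, the discrete energy estimate, the De~Giorgi interpolation, and the Arzel\`a--Ascoli compactness argument are all as in the paper.

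There is, however, a genuine gap in your treatment of the dissipation term. You claim that the inequality $|\partial\cH|\ge\sqrt{D}$ ``is the content of the strong-upper-gradient property already proved'' (Corollary~\ref{cor:sug-dissipation}). This is backwards: the strong-upper-gradient inequality $|\cH(\mu_s)-\cH(\mu_t)|\le\int\sqrt{D(\mu_r)}\,|\dot\mu|(r)\,\dd r$ controls the slope from \emph{above}, not below; taking $y\to x$ along a geodesic yields at best $|\partial\cH|\le\sqrt{D}$. What the scheme actually needs is the opposite bound $\sqrt{D}\le|\partial\cH|$, so that the De~Giorgi quantity $G_\tau\ge|\partial\cH|(\tilde\mu^\tau)$ can be replaced by $\sqrt{D(\tilde\mu^\tau)}$ before passing to the limit. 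This does not come for free from anything proved so far; the paper supplies it as a separate Lemma~\ref{lem:slope-diss}, by testing along the Boltzmann flow itself: one has $D(f_0)\le\lim_{t\searrow 0}\frac{\cH(\mu_0)-\cH(\mu_t)}{t}\le|\partial\cH|(\mu_0)\,|\dot\mu|(0)\le|\partial\cH|(\mu_0)\sqrt{D(\mu_0)}$, which gives exactly $\sqrt{D}\le|\partial\cH|$. (Ironically, this is essentially the argument you sketch for the ``reverse inequality'' with competitor $U=\dgrad\log f$---you have the two directions swapped.)

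A second, smaller issue: you propose to obtain the $\liminf$ for the dissipation by proving weak lower semicontinuity of $|\partial\cH|$, either via a relaxation formula or via \cite[Cor.~2.4.10]{AGS08}. The latter typically requires geodesic convexity, which is not available here. The paper instead proves directly that $\mu\mapsto D(\mu)$ is weakly lower semicontinuous (Lemma~\ref{lem:diss-lsc}), by writing $D$ as the integral functional $\int G\big(\frac{\dd\mu^1}{\dd\lambda},\frac{\dd\mu^2}{\dd\lambda}\big)\dd\lambda$ with the convex $1$-homogeneous integrand $G(x,y)=\tfrac14(x-y)(\log x-\log y)$ and invoking Proposition~\ref{prop:lsc}. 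Combined with $\sqrt{D}\le|\partial\cH|\le G_\tau$ from above, this lets one pass to the limit in the discrete energy identity and conclude.
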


With the knowledge that the Boltzmann equation in our setting has a
unique solution (assuming in addition $\cE_4(\mu_0)<\infty)$ if
$\gamma>0$), we obtain convergence of $\bar\mu^\tau_t$ to the solution
to the Boltzmann equation for any sequence of time steps $\tau\to0$.

With the work we have done so far, Theorem \ref{thm:JKO} follows
basically from standard general results for metric gradient flows
where \eqref{eq:min-move} is known as the minimizing movement scheme,
see \cite[Sec.~2.3]{AGS08}. We need one small additional ingredient
relating the dissipation $D$ to the metric slope $|\partial \cH|$ of
the entropy in the metric space $(\cPpE,\cW_B)$. Recall
\eqref{eq:metric-slope-def} for the definition of the metric slope. We
consider its sequentially lower semicontinuous envelope, or
\emph{relaxed slope} $|\partial^-\cH|$ given by
\begin{align*}
  |\partial^-\cH|(\mu)=\inf\left\{
\liminf_{n\to\infty}|\partial\cH(\mu_n)| ~:~\mu_n\rightharpoonup \mu,~ \sup_n\{\cW_B(\mu_n,\mu),\cH(\mu_n)\}<\infty
 \right\}\;.
\end{align*}

\begin{lemma}\label{lem:slope-diss}
  For any $\mu\in\cPpE$ with $\cH(\mu)<\infty$ we have that
   $ \sqrt{D(\mu)} \leq |\partial^- \cH(\mu)|\;.$ 
In particular, $|\partial^- \cH(\mu)|$ is a strong upper gradient for $\cH$.
\end{lemma}

\begin{proof}
  Let $f$ be the density of $\mu$ and consider the solution $(f_t)$
  to the homogeneous Boltzmann equation with initial datum
  $f$. Set $\mu_t=f_t\cL$ and observe that
  \begin{align*}
    D(f)&\leq\lim_{t\searrow0} \frac{\cH(\mu)-\cH(\mu_t)}{t} = \lim_{t\searrow0} \frac{\cH(\mu)-\cH(\mu_t)}{\cW_B(\mu_t,\mu)}\frac{\cW_B(\mu_t,\mu)}{t}\\
        &\leq |\partial\cH(\mu)||\dot\mu|(0) \leq |\partial\cH(\mu)|\sqrt{D(\mu)}\;.
  \end{align*}
  Thus, we have $\sqrt{D(\mu)}\leq|\partial\cH(\mu)|$ for any such
  $\mu$. The claim follows immediately from the lower semicontinuity
  of $D$, Lemma \ref{lem:diss-lsc}.
\end{proof}

\begin{proof}[Proof of Theorem \ref{thm:JKO}]
  We verify that the present situation is consistent with the
  abstract setting considered in \cite[Sec.~2]{AGS08}.

  We consider the metric space $(\cP_{\mu_0},\cW_B)$ and endow
  it with the weak topology $\sigma$. By Theorem \ref{thm:distance},
  $(\cP_{\mu_0},\cW_B)$ is complete, $\cW_B$ is lower
  semicontinuous w.r.t.~$\sigma$ and induces a stronger topology.
  Recall from Section \ref{sec:prelim} that the entropy $\cH$ is
  bounded below on $\cPpE$ and lower semicontinuous w.r.t.~weak
  convergence. Moreover, $\cPpE$ is compact w.r.t.~weak
  convergence. Thus, \cite[Assumption 2.1 a,b,c]{AGS08} are satisfied.
 
  Existence of a solution to the variational scheme
  \eqref{eq:min-move} and of a subsequential limit curve $(\mu_t)_t$
  now follows from \cite[Cor.~2.2.2, Prop.~2.2.3]{AGS08}. Moreover,
  \cite[Thm.~2.3.2]{AGS08} gives that the limit curve is a curve of
maximal slope for the strong upper gradient $|\partial^-\cH|$, i.e.~
  \begin{align*}
     \frac12\int_0^t|\dot\mu|^2(r) + |\partial^-\cH(\mu_r)|^2\dd r + \cH(\mu_t)
      \leq \cH(\mu_0)\;.
   \end{align*}
   Thus, by Lemma \ref{lem:slope-diss}, it is also a curve of maximal
   slope for the strong upper gradient $\sqrt{D}$. 
 \end{proof} 

\bibliographystyle{plain}
\bibliography{literature}

\end{document}